\newcolumntype{L}{>{$}l<{$}} % math-mode version of "l" column type
\newcolumntype{C}{>{$}c<{$}} % math-mode version of "l" column type
\theoremstyle{plain}
\newtheorem*{thm*}{Theorem}
\newtheorem{thm}{Theorem}[section]
\newtheorem*{lemma*}{Lemma}
\newtheorem{lemma}[thm]{Lemma}
\newtheorem{pro}[thm]{Proposition}
\newtheorem*{pro*}{Proposition}
\newtheorem{cor}[thm]{Corollary}
\newtheorem*{cor*}{Corollary}
\newtheorem*{con*}{Conjecture}
\newtheorem{con}[thm]{Conjecture}
\theoremstyle{definition}
\newtheorem{df}[thm]{Definition}
\newtheorem*{df*}{Definition}
\theoremstyle{remark}
\newtheorem{rem}[thm]{Remark}
\newtheorem*{rem*}{Remark}
\newtheorem{ex}[thm]{Example}
\newtheorem*{ex*}{Example}
\newcommand{\multi}{{\underline{\eta}}} %multisingularity
\newcommand{\mzeta}{{\underline{\zeta}}} %multisingularity
\newcommand{\cvar}{{\underline{c}}} %c-variables
\newcommand{\svar}{{\underline{s}}} %s-variables
\newcommand{\tvar}{{\underline{t}}} %t-variables
\newcommand{\avar}{{\underline{a}}} %t-variables
\newcommand\N{\widetilde{N}} %Target power series used in the proof of induction
\newcommand\M{\widetilde{M}} %Source power series used in the proof of induction
\newcommand\F{\operatorname{F}} %"pushforward" map on power series
\newcommand\FF{\widetilde{\operatorname{F}}} %SSM-"pushforward" map on power series
\newcommand{\sq}{\,\dot{\sqcup}\,} %disjoint sum
\newcommand{\ThS}[1]{{\operatorname{Th}^S_{#1}}} %SSM source Thom polynomial
\newcommand{\ThT}[1]{{\operatorname{Th}^T_{#1}}} %SSM target Thom polynomial
\newcommand{\MS}{\exp(\MSlog)} %exp of master series
\newcommand{\MSlog}{S_\varnothing} %master series
\newcommand{\ThScl}[1]{{\operatorname{Th}}^{S,cl}_{#1}} %classical source Thom polynomial
\newcommand{\ThTcl}[1]{{\operatorname{Th}}^{T,cl}_{#1}} %classical target Thom polynomial
\newcommand{\KazT}[1]{S_{#1}} %Kazarian series in the target
\newcommand{\KazS}[1]{R_{#1}} %Kazarian series in the source
\newcommand{\ThTMa}[2]{\operatorname{Th}^T_{#1,\le #2}} %Thom polynomial up to degree k
\newcommand{\ThSMa}[2]{\operatorname{Th}^S_{#1,\le #2}} %Thom polynomial up to degree k
\newcommand{\Slocus}[2]{\Sigma^{S}_{#1}(#2)} %Source singular locus
\newcommand{\Tlocus}[2]{\Sigma^{T}_{#1}(#2)} %Target singular locus
\DeclareMathOperator{\E}{\mathcal E}
\DeclareMathOperator{\K}{\mathcal K}
\DeclareMathOperator{\OO}{\mathcal O}
\DeclareMathOperator{\Ideal}{\mathfrak m}
\DeclareMathOperator{\codim}{codim}
\DeclareMathOperator{\scodim}{scodim}
\DeclareMathOperator{\tcodim}{tcodim}
\newcommand\III{I\!I\!I}
\newcommand\cl{cl}
\newcommand{\CC}{\mathcal{C}}
\newcommand{\CCv}{\tilde{\mathcal{C}}}
\newcommand{\Ma}{M\!a}
\newcommand\rhoS{\rho^S}
\newcommand\rhoT{\rho^T}
\newcommand{\xto}[1]{\stackrel{#1}{\longrightarrow}}
\newcommand{\Aut}{\operatorname{Aut}}
\newcommand{\csm}{\operatorname{csm}}
\newcommand{\ssm}{\operatorname{ssm}}
\newcommand{\coh}{\operatorname{H}}
\newcommand{\id}{\operatorname{id}}
\newcommand{\eu}{\operatorname{eu}}
\newcommand{\pt}{\operatorname{pt}}
\newcommand{\im}{\operatorname{im}}
\newcommand{\A}{\mathcal{A}}
\newcommand{\ZZ}{\mathbb{Z}}
\newcommand{\NN}{\mathbb{N}}
\newcommand{\QQ}{\mathbb{Q}}
\newcommand{\C}{\mathbb{C}}
\newcommand{\TT}{\mathbb{T}}
\newcommand{\CP}{{\C}\!P}
\title{Higher characteristic classes of multisingularity loci}
\author{Jakub Koncki}
\address{Institute of Mathematics, University of Warsaw, Warsaw, Poland}
\author{Rich\'ard Rim\'anyi}
\address{Department of Mathematics, UNC Chapel Hill, Chapel Hill, NC, USA}
\date{Spring 2025}
\begin{document}

\begin{abstract}
A map between manifolds induces stratifications of both the source and the target according to the occurring multisingularities. In this paper, we study universal expressions—called higher Thom polynomials—that describe the Segre–Schwartz–MacPherson class of such multisingularity loci. We prove a Structure Theorem reducing these Thom polynomials to the data of a linear series associated with each multisingularity. The series corresponding to the empty multisingularity, referred to as the Master Series, plays a distinguished role. Motivated by connections with geometric representation theory, we further prove an Interpolation Theorem that allows Thom polynomials to be computed algorithmically within Mather’s range of nice dimensions. As an application, we derive an explicit formula for the image Milnor number of quasihomogeneous germs, providing one side of the celebrated Mond conjecture, computable up to the theoretical bound.
\end{abstract}

\maketitle

\section{Introduction}

\subsection{Multisingularity loci}
A complex algebraic map $f: M \to N$ between complex manifolds induces stratifications of both $M$ and $N$ by {\em multisingularities}. A multisingularity $\multi=\{\eta_1,\ldots,\eta_r\}$ is a multiset of singularity types, and the corresponding {\em target locus}
$
\Tlocus{\multi}{f} \subset N
$
consists of points having exactly $r$ preimages where $f$ has singularities of types $\eta_1, \ldots, \eta_r$. The nonsingular behavior, denoted $A_0$, is regarded as a singularity type; for example, $\Tlocus{{A_0, A_0}}{f}$ is the locus of ordinary double points. With the notations $A_2 = \C[t]/(t^3)$ and $I_{23} = \C[x,y]/(xy, x^2 + y^3)$, the locus $\Tlocus{{A_0, A_2, I_{23}}}{f}$ consists of points of $N$ having three preimages: one nonsingular and two with the indicated local algebras. The algebraic encoding of singular behavior will be recalled in Section~\ref{s:sing}.

The superscript $T$ indicates that $\Tlocus{\multi}{f}$ lies in the {\em target}; the corresponding {\em source locus} $\Slocus{\multi}{f} \subset M$ is defined analogously for multisingularities with one distinguished component.

\subsection{Universal polynomials: the Thom principle}
A central aim of global singularity theory is to express characteristic classes of the loci $\Tlocus{\multi}{f}$ and $\Slocus{\multi}{f}$ in terms of characteristic classes associated with the map $f$, defined by
\[
1 + c_1(f) + c_2(f) + \cdots = \frac{f^*(c(TN))}{c(TM)},
\qquad
s_\lambda(f) = f_*\left( \prod_i c_{\lambda_i} \right).
\]
The guiding {\em Thom principle} (see Section~\ref{sec:ThomPrinciple}) --- a theorem, conjecture, or heuristic depending on context --- asserts that for any characteristic class theory $cl$ applicable to subvarieties, there exist universal polynomials $P_\multi$ and $Q_\multi$, depending only on the multisingularity $\multi$, such that for suitable maps $f$,
\[
cl\left(\Slocus{\multi}{f} \subset M\right)
= P_\multi \left(c_i(f), f^*(s_\lambda(f))\right),
\qquad
cl\left(\Tlocus{\multi}{f} \subset N\right)
= Q_\multi \left(s_\lambda(f)\right).
\]
The strength of this principle lies in the {\em universality} of $P_\multi$ and $Q_\multi$: they are independent of the specific map $f$, depending only on $\multi$. Since the characteristic classes $c_i(f)$ and $s_\lambda(f)$ are often readily computable (e.g. homotopy invariants of $f$), the Thom principle provides a powerful bridge from these accessible data to the intricate geometry of multisingularity loci.

A classical example of the Thom principle is the (target) {\em double point formula}
\[
[\overline{\Tlocus{{A_0, A_0}}{f}}] = (s_0^2 - s_l)(f) \in H^*(N),
\]
valid for sufficiently nice maps $f: M^m \to N^{m+l}$.
Here the fundamental cohomology class of the closure of the double point locus is expressed by the {\em target Thom polynomial} $s_0^2 - s_l$.

\subsection{Segre–Schwartz–MacPherson class}

In this paper we consider a refinement of the notion of fundamental class: the {\em Segre–Schwartz–MacPherson} (or $\ssm$) class
\[
\ssm(\Sigma \subset N) = [\overline{\Sigma}] + \text{higher order terms} \quad \in H^*(N),
\]
associated with a (possibly singular or non-closed) subvariety $\Sigma \subset N$ of a smooth ambient space $N$.

The concept of $\ssm$ classes has two origins.
First, up to normalization and suitable identifications, they can be viewed as motivic analogues of the total Chern class of the tangent bundle, extending this notion to singular varieties. This perspective traces back to the foundational works of Deligne, Grothendieck, MacPherson, and Schwartz; see the surveys in Chapters 5-7 of \cite{BrasseletBook}.
Second, they are closely related to the stable envelope characteristic classes of Maulik and Okounkov \cite{MO}, central to geometric representation theory; the connection between stable envelopes and $\ssm$
%Segre–Schwartz–MacPherson 
classes has been developed in \cite{RV, FRcsm, AMSS_csm}.

To illustrate the additional information carried by $\ssm(\Sigma)$ beyond the fundamental class, note that when $N$ is a projective space, the class $[\overline{\Sigma}]$ determines the degree of $\overline{\Sigma}$, which may be interpreted as the Euler characteristic of a general linear section of $\Sigma$ of complementary dimension. In contrast, $\ssm(\Sigma)$ encodes the Euler characteristics of general linear sections of $\Sigma$ of all dimensions \cite{ohmotoCHI, aluffi}.

The Thom principle for $\ssm$-classes of monosingularities --- defining the first SSM-Thom polynomials --- was introduced in \cite[Ch.~5]{OhmotoCamb}; see also \cite{ohmotoSMTP}. Real, mod 2, analogues of $\ssm$ classes in singularity theory are explored in \cite{MatszFeher}.

\subsection{SSM–Thom polynomials for multisingularities}

In what follows, we establish (in certain cases) and conjecture (in others) the existence of {\em SSM–Thom polynomials} of multisingularities, denoted
\[
\ThS{\multi}, \quad \ThT{\multi}.
\]
These are universal power series depending only on the multisingularity $\multi$, not on the map $f$, and they express the $\ssm$ classes of the source and target multisingularity loci in terms of the characteristic classes of~$f$. 

Our first main result is a {\em Structure Theorem} for $\ThS{\multi}$ and $\ThT{\multi}$: Corollaries~\ref{cor:indT}, \ref{cor:IndTver}. 
%Let us illustrate the idea for the target version. %\ThT{\multi}$. 
The key observation is that the typically intricate high-degree expressions for $\ThT{\multi}$ can be encoded compactly. Specifically, for every multisingularity $\multi$ there exists a formal power series
$
\KazT{\multi} \in \QQ[[s_\lambda]],
$
linear in the variables $s_\lambda$, such that
\begin{equation}\label{eq:IntoMain}
\boxed{
\sum_{\multi}
\frac{\ThT{\multi}}{|\Aut(\multi)|}  t^\multi
=
\exp\bigg(
\sum_{\multi}
\frac{\KazT{\multi}}{|\Aut(\multi)|}  t^\multi
\bigg)
\in \QQ[[\svar, \tvar]],
}
\end{equation}
(and see Corollary~\ref{cor:IndS} for the source version). 
Here $t^\multi$ denotes monomials in formal variables indexed by monosingularities; %, with the convention $t^\varnothing = 1$
and the denominators are explicit factorials reflecting automorphism symmetries of $\multi$. This structure of SSM-Thom polynomials for multisingularities was conjectured, based on a comment by M.~Kazarian, in \cite[Sect.~6.4]{ohmotoSMTP}.

Equation~\eqref{eq:IntoMain} can also be rewritten as a {\em recursive} expression for the Thom polynomials $\ThT{\multi}$ in terms of the fundamental building blocks $\KazT{\multi}$. This structure --- whether written in exponential or recursive form --- is analogous to that established by Kazarian~\cite{KazaMulti} and proved by Ohmoto~\cite{TOmult} for the classical {\em fundamental class} Thom polynomials. However, even at the level of formulation, a crucial difference arises in the role of the {empty multisingularity} $\multi = \varnothing$. After all,
{\em to understand something, the first step is to understand nothing}.
%\footnote{the second author has already mastered this step.}

\subsection{The Master Series \texorpdfstring{$\KazT{\varnothing}$}{KazT(varnothing)}}
For the classical (fundamental class) Thom polynomials one has $\KazT{\varnothing} = 0$, for trivial reasons. %a statement that is essentially a reformulation of the Implicit Function Theorem.
In contrast, for the SSM-Thom polynomials, the non-trivial series $\KazT{\varnothing}$ plays a central role; we therefore call it the {\em Master Series}.
As follows from~\eqref{eq:IntoMain}, the Master Series is a fundamental ingredient in the construction of every $\ThT{\multi}$. Or, 
%to borrow a metaphor: 
to phrase it more philosophically: {\em emptiness is not void --- it is the source of all things, the foundation of existence.}

For every positive integer $l \ge 1$ there is a Master Series corresponding to the empty multisingularity for maps $M^* \to N^{*+l}$. For $l=1$, it is 
\[
\KazT{\varnothing}
= -s_{} + \tfrac{1}{2}s_{1} + \tfrac{1}{6}(7s_{2} - 2s_{11}) 
+ 
\tfrac{1}{4}( s_3 -5 s_{21} +s_{111} )
+
\cdots,
\]
see Example~\ref{ex:MS} and~\cite{TPP} for further terms and other $l$. Interestingly, $\KazT{\varnothing}$ involves nontrivial denominators which, for $l=1$, appear to coincide with the denominators of the Cauchy numbers of the second kind: $1,2,6,4,30,12,84,24,\ldots$.
By contrast, all computed series $\KazT{\multi}$ for nonempty $\multi$ have integer coefficients; see e.g. Figure~\ref{fig:Sl1}.

\subsection{Computations}
The theory of global singularities is inherently {\em computational}: explicit Thom polynomials often translate into concrete results in enumerative and algebraic geometry, as well as obstruction theory. In Section~\ref{s:interpolation} we describe a method to compute the SSM-Thom polynomials in broad generality. Let us outline its main ingredients.

Our starting point is the {\em interpolation method}, introduced in~\cite{rrtp}.
Although alternative approaches exist --- based on partial resolutions, iterated residues, or nonreductive quotients~\cite{BSz, FRannals, BercziNew, BSzmult},  --- interpolation remains among the most effective ones for computing classical (fundamental class) Thom polynomials.
However, in its original form it does not extend directly to the $\ssm$ setting.

An extension of this method was proposed by Ohmoto, and Ohmoto-Nekarda~\cite{ohmotoSMTP,nekarda1,nekarda2}, who reduced the computation of SSM-Thom polynomials to that of $\ssm$ classes of certain singular affine varieties. While the latter computations are often difficult in themselves, the method proved efficient for some $\multi$ in low degree. 

A further key input comes from {\em geometric representation theory}, specifically the {\em Maulik–Okounkov stable envelopes}. These are characteristic classes arising in quantum integrable systems, defined axiomatically. It is known that whenever both stable envelopes and $\ssm$ classes are defined, they are closely related.
This suggests that suitable variants of the MO-axioms should characterize SSM-Thom polynomials --- a fact established as part of our {\em Interpolation Theorem \ref{thm:Interpolation}}.

One of the MO-axioms, the {\em support axiom}, is of geometric nature. Remarkably, it becomes purely {\em algebraic} in the SSM-Thom setting, manifesting as an interpolation constraint. This is another fact that is part of our Interpolation Theorem.
%see also~\cite[Thm.~5.4]{RR_ssmTp}. 
However, to emphasize the delicacy of the situation, let us note that we do not know of any counterpart of this phenomenon for stable envelopes.

Combining these ideas, our computation of SSM-Thom polynomials for multisingularities (including the Master Series) becomes entirely algorithmic, with no geometric input. The only ingredient needed is the list of monosingularities (55 of them) in the Mather range together with their symmetries. 
The resulting formulas are implemented computationally; a selection of data is available at~\cite{TPP}.
For instance,
\[
\KazT{A_0A_0}^{l=2}
=
-s_2
+ (2s_{21} + 2s_3)
- (7s_4 + 7s_{31} + 3s_{211})
+ (36s_5 + 37s_{41} + 12s_{32} + 15s_{311} + 4s_{2111})
- \cdots.
\]

\subsection{Mond conjecture} 
We expect that the mentioned Structure Theorem and the Interpolation Theorem will have applications across several areas of geometry. In this paper, we present one such application: a contribution to Mond’s conjecture.

There are important theorems all around mathematics that link the interior features of a function (such as oscillatory behavior) to its exterior geometric or deformation-theoretic properties --- that is, how it sits inside an ambient space of functions. Classic examples include the Sturm–Liouville theory and the inequality “Milnor number $\geq$ Tjurina number” for complex analytic functions. Mond’s conjecture is an open problem of this same nature.

Let $f$ be finite germ $f : (\C^m, 0) \to (\C^{m+1}, 0)$ (see Section~\ref{s:Mond}). In the space of germs, consider the subset consisting of those that become equivalent to $f$ after reparametrizations of the source and target. The codimension of this subset is denoted by 
$\A_e\text{-codim}(f)$.

When $m\leq 14$, the germ $f$ has a stable perturbation, and its image is homotopy equivalent to a bouquet of spheres. The number of spheres in this bouquet, denoted $\mu_I(f)$, is called the {\em image Milnor number} of $f$. Mond’s conjecture~\cite[Rem.~8.1]{MNB} asserts that
$
\A_e\text{-codim}(f) \le \mu_I(f),
$
and that equality holds precisely when $f$ is quasihomogeneous.

This conjecture remains open, although its analogue for function germs is known and forms a cornerstone of the theory of singularities of functions. At the current stage, even concrete examples supporting the conjecture are of interest --- and this is precisely where Thom polynomials can make a contribution. Building on a result of Ohmoto~\cite{ohmotoSMTP}, see also \cite{PallaresPenafort}, the information encoded in our Master Series translates directly into a formula for the image Milnor number $\mu_I(f)$ of a quasihomogeneous map germ. We present this result in Section~\ref{s:Mond}; it provides an explicit expression for one side of Mond’s conjecture, valid up to the theoretical bound $M(1) = 14$.

\begin{rem}
As already noted, the appearance of denominators in the Master Series --- and consequently in Thom polynomials --- remains somewhat mysterious. It appears that the theory of multisingularities may reveal new number-theoretic constraints on certain combinations of characteristic classes of maps. We do not pursue this direction in the present paper; however, we illustrate the nature of possible results in Example~\ref{ex:ImageMilnorExample}.
\end{rem}

\subsection{Conventions}
Throughout the paper we work with even degree cohomology with rational coefficients, and will use the notation $\coh^*(X)=H^{2*}(X;\QQ)$. For the Euler class of $\xi$ we will write $\eu(\xi)$. In most of the paper we study maps from $m$ dimensions to $m+l$ dimensions with $l\geq 1$. We call $l$ the relative dimension of such maps. At some places, but not in the main theorems, we permit $l=0$. The upper indices $S$, $T$ always refer to source and target, that is, domain and codomain.

\subsection{Acknowledgments}
The first author was supported by National Science Centre (Poland) grant SO\-NA\-TI\-NA
2023/48/C/ST1/00002. The second author was supported by the U.S. National Science Foundation under Grant No. 2152309. Any opinions, findings, and conclusions or recommendations expressed in this material are those of the author(s) and do not necessarily reflect the views of the National Science Foundation.
\\
We are grateful to L. Feh\'er and T. Ohmoto for useful discussions on the topic.

\section{Singularities} \label{s:sing}
In this section we briefly recall some basic notions of singularity theory, for more details see \cite{MNB}.

\subsection{Contact monosingularities}
For $m,l\geq 0$ let $\E(m,m+l)$ be the vector space of germs of holomorhic maps $(\C^m,0)\to (\C^{m+l},0)$. The group of complex holomorphic diffeomorphism germs of $(\C^m\times \C^{m+l},0)$ of the form 
\[
\Phi(x,y)=(\phi(x), \psi(x,y))
\]
where $\psi(x,0)=0$, is called the contact group $\K(m,m+l)$. The group $\K(m,m+l)$ acts on $\E(m,m+l)$ via its action on the graph. Orbits of the action are called {\em contact monosingularities}, or simply {\em singularities}.

The local algebra of a germ $f:(x_1,\ldots,x_m)\mapsto (f_1,\ldots,f_{m+l})$ is defined as 
$Q_f=\OO_m/f^*\Ideal_{m+l}$, where $\OO_m$ is the ring of holomorphic function germs at $(\C^m,0)$ and $\Ideal_{m+l}$ is the maximal ideal of $\OO_{m+l}$. We will be only interested in finite germs, that is, when this algebra is finite dimensional and can be presented as $Q_f=\C[[x_1,\ldots,x_m]]/(f_1,\ldots,f_{m+l})$. It is a theorem of Mather \cite{mather4} that two germs in $\E(m,m+l)$ are $\K(m,m+l)$-equivalent if and only if their local algebras are isomorphic. Hence a (commutative, finite dimensional, local) algebra $Q$ as well as $m$ and $l$ determine a monosingularity $\eta(Q,m,l)$ (unless this set is empty).

\begin{rem}
    In practice, we can replace the vector space $\E(m,m+l)$ of germs, and the group $\K(m,m+l)$ of germs with their $N$-jets ($N \gg 0$) to obtain $\E_N(m,m+l)$ and $\K_N(m,m+l)$. In this way we obtain an algebraic group acting on a finite dimensional vector space. Our constructions and results do not depend on $N$ as long as $N$ is large enough, hence by abuse of notation we will not write the subscript~$N$.
\end{rem}

In most of our considerations the $m$-dependence will be irrelevant, hence we identify singularities $\eta(Q,m,l)$ for different $m$'s, and denote the obtained equivalence class by $\eta(Q,l)$, or simply by $\eta(Q)$ if $l$ is clear from the context. When no confusion arises, we can also simply write $Q$ for the singularity $\eta(Q)$. The notation of algebras
\[
    A_k=\C[x]/(x^{k+1}), \qquad I_{a,b}=\C[x,y]/(xy,x^a+y^b), \qquad \III_{a,b}=\C[x,y]/(x^a,xy,y^b)
\]
(for $k\geq 0$, $a\geq b\geq 2$) is standard in singularity theory.
The codimension of $\eta(Q,m,l)$ in $\E(m,m+l)$ (when the former is not empty) is independent of $m$ and is a linear function $\mu l + b$ of $l$. Here $\mu$ and $b$ are numerical invariants of the algebra $Q$; in fact $\mu=\dim_{\C}Q-1$. For example 
\[
\begin{array}{rcll}
\codim(A_k \subset \E(*,*+l)) & = & kl+k & \text{for }l\geq 0, \\
\codim(I_{a,b} \subset \E(*,*+l) ) & = & (a+b-1)l+(a+b) & \text{for }l\geq 0, \\
\codim(\III_{a,b} \subset \E(*,*+l) ) & = & (a+b-2)l+(a+b) & \text{for }l\geq 1. 
\end{array}
\]

\subsection{Multisingularities}
Consider singularities for a fix $l\geq 0$. 

\begin{df} \ 
\begin{itemize}
     \item A T-multisingularity (target-multisingularity) $\multi$ is a finite multiset of singularities. We will use intuitive notation, for example, if $\eta_1$ and $\eta_2$ denote singularities, then $(\eta_1^5,\eta_2^2)$ or simply $\eta_1^5\eta_2^2$ will denote the multiset containing these two monosingularities with multiplicities 5 and 2.
	\item An S-multisingularity (source-multisingularity) is a T-multisingularity $\multi$ with a distinguished element, i.e. a pair $(\eta_1,\multi)$, such that $\eta_1$ is a singularity, $\multi$ is a T-multisingularity and $\eta_1\in \multi$. We usually omit $\eta_1$ in notation, when it is clear from the context.
\end{itemize}
\end{df}

	The empty set is a T-multisingularity but not an S-multisingularity. A monosingularity can be regarded as both an S- and a T-multisingularity.

\begin{df}
	For a T-multisingularity $\multi=(\eta_1^{a_1},\dots,\eta_k^{a_k})$ we define a number $$|\Aut(\multi)|=a_1!\cdot a_2!\cdot\ldots\cdot a_k!,\qquad |\Aut(\varnothing)|=1.$$
	For an S-multisingularity $(\eta_1,\multi)$ we set $|\Aut(\eta_1,\multi)|=|\Aut(\multi\setminus \eta_1)|\,,$ i.e. for $\multi=(\eta_1^{a_1},\dots,\eta_k^{a_k})$ we have
		$$|\Aut(\eta_1,\multi)|=(a_1-1)!\cdot a_2!\cdot\ldots\cdot a_k!\,.$$
\end{df}

\begin{df} Let $\mzeta$ and $\multi$ be T-multisingularities.
\begin{itemize}
    \item We write $\mzeta\subset \multi$ when $\mzeta$ is a submultiset of $\multi$.
    \item We write  $\multi+\mzeta$ for the disjoint union of $\multi$ and $\mzeta$.
\end{itemize}
\end{df} 
\begin{ex}
    Let $\multi=A^2_0A_1$ and $\mzeta=A_0^2$, then $\multi+\mzeta=A^4_0A_1$.
\end{ex}
 \begin{df} [Multisingularity induced by subset]
  Let $\multi$ be a T-multisingularity. Introduce an order on it $\multi=\{\eta_1,\eta_2,\ldots,\eta_k\}$, i.e. a bijection $\multi \simeq [k]$. A subset $I\subset [k]$ induces a T-multisingularity
  $$\multi_I=\{\eta_i\}_{i\in I}\,.$$
 For an S-multisingularity we choose a bijection $\multi \simeq [k]$, such that the distinguished element corresponds to~1. Then subsets containing 1 induce S-multisingularities.   
 \end{df}
 \begin{ex}
 	Consider the S-multisingularity $\multi=(A_0,A_0^3A_1A_2)$ with an order $\multi=\{A_0,A_1,A_0,A_2,A_0\}$. We have
 	$$
    \multi_{\{2,3\}}=\multi_{\{2,5\}}=A_0A_1, \qquad 
    \multi_{\{1,2,4\}}=(A_0,A_0A_1A_2)\ (\text{as an S-multisingularity}). 
    $$
 \end{ex}

\subsection{Multisingularity loci}

The contact group contains the complex holomorphic reparametrization group of the source and the target---the so called `right-left' group. Hence, the following concepts are well defined. 

\begin{df}
Let $f:M\to N$ be a map of relative dimension $l$.
	\begin{itemize}
		\item For $x\in M$ we write $\eta_x$ for the singularity induced by the germ $f:(M,x)\to (N,f(x)).$
		\item Suppose that the preimage of $y\in N$ is finite. We write $\multi_y$ for a T-multisingularity induced by germs of $f$ on all preimages of $y$.
		\item For a T-multisingularity $\multi$ we consider a subset $\Tlocus{\multi}{f}\subseteq N$ (`T-multisingularity locus') defined by
		$$
		\Tlocus{\multi}{f}=\{y\in N|\multi_y=\multi\}\,.
		$$
		\item For an S-multisingularity $(\eta_1,\multi)$ we consider a subset $\Slocus{(\eta_1,\multi)}{f}=\Slocus{\multi}{f}\subseteq M$ (`S-multisingularity locus') defined by 
		$$
		\Slocus{(\eta_1,\multi)}{f}=\{x\in M|\eta_x=\eta_1\,, \multi_{f(x)}=\multi\}\,.
		$$
	\end{itemize}
\end{df}

\subsection{Prototypes of singularities, multisingularities}
\label{sec: proto}
Let $Q=\C[[x_1,\ldots,x_a]]/(g_1,\ldots,g_{a+l})$ be a presentation of a finite dimensional algebra with minimal number $a$ of generators, and exactly $l$ more polynomial relations than generators. We call the germ
\[
g:(\C^a,0)\to (\C^{a+l},0), \qquad 
(x_1,\ldots,x_a)\mapsto (g_1,\ldots,g_{a+l})
\]
the genotype for $Q$ and $l$. The prototype germ for $Q$ and $l$ is defined as
\[
p:(\C^a\times V, 0) \to (\C^{a+l}\times V,0), \qquad 
(x,\phi) \mapsto (g(x) + \phi(x), \phi),
\]
where 
$V$ is a linear complement of 
\[
t_g(\theta_a) + g^*(\Ideal_{a+l})\theta_g \quad \text{ in } \quad \Ideal_{a}\!\theta_g.
\]
Here we used the following standard notions of singularity theory: 
\[
\theta_a=\E(a,1) \text{ (algebra)}, \quad \Ideal_a=\text{its maximal ideal}, \quad \theta_g=\text{space of vector fields along $g$},
\]
\[
t_g:\theta_a\to \theta_g, \qquad t_g(h)=dg \circ h.
\]
\begin{ex}\label{ex:I22,l=1,prototype}
    Let $Q=\C[x,y]/(x^2,y^2)$ (isomorphic to $I_{22}$) and $l=1$. The genotype is $g:(x,y)\mapsto (x^2,y^2,0)$. For a basis of $V$ we can choose 
    \begin{multline*}
    u_1:(x,y)\mapsto (y,0,0), \quad
    u_2:(x,y)\mapsto (0,x,0), \\
    u_3:(x,y)\mapsto (0,0,x), \quad
    u_4:(x,y)\mapsto (0,0,y), \quad
    u_5:(x,y)\mapsto (0,0,xy),
    \end{multline*}
    and for the prototype we obtain $p:(\C^7,0)\to (\C^8,0)$
\begin{equation*}\label{eq:I22,l=1,prototype}
p:(x,y,u_1,u_2,u_3,u_4,u_5)\mapsto (x^2+u_1y,y^2+u_2x,u_3x+u_4y+u_5xy,u_1,u_2,u_3,u_4,u_5).
\end{equation*}
\end{ex}
%\JK{ below: "corresponds to a Mather algebra for $l$." We use prototypes only in Mather range, but  I hope this is general statement. Is this construction working only in Mather dimensions? \\} 

Let $\multi=(\eta_1,\ldots,\eta_k)$ be a non-empty T-multisingularity. 
% such that each $\eta_i$ corresponds to a Mather algebra for $l$. 
Let $p_i:(\C^{b_i},0)\to (\C^{b_i+l},0)$ be the corresponding prototypes for $i=1,\ldots,k$. Let $b=\sum_i {b_i} + (k-1)l$, and define the {\em prototype of the multisingularity $\multi$} to be the $k$-multigerm 
\[
p:(\C^b,0) \sqcup  (\C^b,0) \sqcup \ldots \sqcup (\C^b,0) \ \to\  (\oplus_{i=1}^k \C^{b_i+l},0)
\]
that, on the $j$'th component of the domain, is defined  by 
\[
p|_{j\text{th } \C^b}: \quad
\left( \id_{\C^{b_1+l}} \oplus \ldots \oplus \id_{\C^{b_{j-1}+l}} \right)
\oplus \  p_j \ \oplus 
\left( \id_{\C^{b_{j+1}+l}} \oplus \ldots \oplus \id_{\C^{b_k+l}} \right). 
\]
%For the emptyset T-multisingularity the prototype is defined as $\emptyset \mapsto (\C^l,0)$.
%\\ \JK{ I would remove the above line}
\begin{ex}
Let $l=1$. The prototype of the multisingularity $A_0^3$ is
\[
(\C^2_{x_1,y_1},0) \sqcup (\C^2_{x_2,y_2},0) \sqcup (\C^2_{x_3,y_3},0) \to (\C^3,0),
\]
given by 
\[
  (x_1,y_1)\mapsto (0,x_1,y_1), \ 
  (x_2,y_2)\mapsto (x_2,0,y_2), \ 
  (x_3,y_3)\mapsto (x_3,y_3,0).   
\]
The prototype of the multisingularity $A_1A_0^2$ is
\[ 
(\C^4,0) \sqcup (\C^4,0) \sqcup (\C^4,0) \to (\C^5,0), 
\]
given by 
\begin{multline*}
    (x_1,x_2,x_3,x_4)\mapsto (x_1^2,x_1x_2,x_2,x_3,x_4),\\
    (x_1,x_2,x_3,x_4)\mapsto (x_1,x_2,x_3,0,x_4),\ 
    (x_1,x_2,x_3,x_4)\mapsto (x_1,x_2,x_3,x_4,0),
\end{multline*}
on the three components of the domain.
\end{ex}

\begin{df}
The S-codimension and T-codimension ($\scodim(\multi)$ and $\tcodim(\multi)$) of a multisingularity $\multi$ are defined to be the dimensions of the source and target spaces of its prototype. 
\end{df}

For a monosingularity, the notion $\scodim$ coincides with its codimension in $\E(m,m+l)$. By definition, we have $\tcodim(\multi)=\scodim(\multi)+l$ for any $\multi$, and 
\[
\scodim(\eta_1,\eta_2,\ldots,\eta_r)=\sum^r \scodim(\eta_i) + (r-1)l,
\qquad
\tcodim(\eta_1,\eta_2,\ldots,\eta_r)=\sum_{i=1}^r \tcodim(\eta_i).
\]
For example, for $l=1$, we have
\begin{equation}\label{eq:codim_examples}
\scodim(A_0)=0, \quad
\scodim(A_1)=2, \quad
\scodim(I_{22})=7, \quad
\scodim(A_0^2A_1I_{22})=12.
\end{equation}

\subsection{Nice dimensions, Mather singularities} \label{sec:Mather}
\cite{mather3, mather4, TPP}
For $l\geq 1$ we define the {\em Mather bound}
    \[
    M(l)=\begin{cases} 6l+8 & \text{if } l=1,2,3, \\ 6l+7 & \text{if } l\geq 4. \end{cases}
    \]
It is a fact that there are only finitely many algebras $Q$ that are local algebras of singularities with codimension $\leq M(l)$ in $\E(m,m+l)$, for any $m$.
Moreover, for any $k\le M(l)$ the subset of $\E(m,m+l)$ corresponding to algebras with codimension at least $k+1$ is of codimension at least $k+1$.
For large $m$ the list of algebras with codimension $\leq M(l)$
is the same: we will call them the Mather algebras for $l$. For $l=1$ there are 32 Mather algebras, and for $l\geq 18$ there are 48 Mather algebras (the same 48). The complete list of algebras that are Mather algebras for some $l$ contains 55 algebras, see \cite{TPP}.

Multisingularities (either S- or T-) with S-codimension at most $\leq M(l)$ are called Mather multisingularities. All the multisingularities in \eqref{eq:codim_examples} are Mather multisingularities, as $M(1)=14$. For small $l$ the Mather T-multisingularities are listed on \cite{TPP}. For $l=1$ there are 265 Mather T-multisingularities, for $l=2$ there are 185.

\subsection{Important classes of maps} \label{sec:CC_l}

Universal counting formulas are expected to only be valid for maps that are stable under perturbations. Namely, define a map $f$ {\em stable}, if any closeby map $g$ (in the appropriate topology) $f$ and $g$ are right-left equivalent: $g= \phi \circ f\circ \psi^{-1}$ for diffeomophisms $\psi, \phi$ of the domain and co-domain. This class of maps is reasonable over the reals, but not over the complexes (think of, for example, maps from a compact manifold to $\C^n$). Experience shows that for complex maps the `target-local version' of perturbation and stability are the right substitutes:

\begin{df}[Def. 3.4 in \cite{MNB}]
 An unfolding of a map germ $g:(\C^m,S)\to (\C^{m+l},0)$ is a germ $G:(\C^{m}\times \C^d , S \times \{0\}) \to (\C^{m+l} \times \C^d,0)$ of the form $(\tilde{g}(x,u),u)$ with $\tilde{g}(x,0)=g$. The unfolding $G=g \times \id_{\C^d}$ is called trivial. If all unfoldings of a germ are equivalent (via the natural equivalence on unfoldings) to a trivial unfolding, then it is called locally stable.
\end{df}

\begin{df}[Def. 4.4 in \cite{MNB}]
A map $f:M^m \to N^{m+l}$ is locally stable if all its induced germs $f: (M,f^{-1}(y)) \to (N,y)$ are locally stable. The class of locally stable maps of relative dimension $l$ will be called $\CCv_l$. 
\end{df}

We will need some other classes of maps with more restrictions on the induced multigerms.

\begin{df} \label{def:CMa}
Let $0 \leq k \leq M(l)$.  
Consider maps $f \colon M^m \to N^{m+l}$ such that, for every $y \in N$, the germ  
$
  f \colon (M, f^{-1}(y)) \to (N, y)
$
is (right-left equivalent to) a trivial unfolding of the prototype of some multisingularity $\multi$ with $\scodim(\multi) \leq k$.  
The class of such maps will be denoted by $\CCv_l^{\Ma,k}$.  
In the case $k = M(l)$, that is, when all Mather multisingularities are allowed, we simply write
$
  \CCv_l^{M\!a} := \CCv_l^{\Ma,M(l)}.
$
\end{df}

Maps in $\CCv_l^{\Ma}$ are more manageable: for example, it is clear that for $f\in\CCv_l^{\Ma}$ we have
\[
\codim\left( \Slocus{\zeta,\multi}{f} \subset M \right) = \scodim(\multi), 
\qquad
\codim\left( \Tlocus{\multi}{f} \subset N \right) = \tcodim(\multi),
\]
independently of $\zeta\in \multi$. For example, for $l=1$, we have
\[
\codim\left( \Slocus{\zeta,A_0^2A_1I_{22}}{f} \subset M\right)=12,  
\quad
\codim\left( \Tlocus{A_0^2A_1I_{22}}{f} \subset N\right)=13, 
\]
for $\zeta$ being either $A_0$, $A_1$ or $I_{22}$.

The theory of locally stable maps outside of the nice dimension range is more subtle. However, it is a theorem of Mather that if $m\leq M(l)$ then locally stable maps are exactly those in $\CCv_l^{\Ma}$.  

\bigskip

In this paper, we aim to provide universal counting formulas for maps $f:M\to N$ that involve the push-forward map $f_*$ in cohomology. Hence, we restrict our map classes to those 
\[
\CC_l \subset \CCv_l, \qquad 
\CC_l^{\Ma,k} \subset \CCv_l^{\Ma,k}, \qquad
\CC_l^{\Ma} \subset \CCv_l^{\Ma}
\]
containing only finite morphisms. Then they are proper, hence $f_*$ is defined.

Besides finite maps between compact manifolds, these classes include, for example, prototypes of Mather singularities even {\em considered equivariantly}---that we will explain now.

Let $G$ be a complex algebraic group (typically a torus $(\C^*)^r$) with two representations $\rhoS$ and $\rhoT$ on $\C^m$ and $\C^{m+l}$. We say that the group $G$ (more precisely the triple $(G, \rhoS, \rhoT)$) is a symmetry of the germ $f:(\C^m,0)\to (\C^{m+l},0)$, if for a representative of $f$ (that we also denote by $f$) we have
\[
\rhoT(g) \circ f \circ \rhoS(g^{-1}) = f \qquad \text{ for all } g\in G.
\]
Applying the Borel construction ($X \mapsto B_GX:=EG \times_G X$ for a contractible space $EG$ with a free $G$-action) to this situation we obtain a map
\begin{equation}\label{eq:BGf}
B_Gf: B_G\C^m \to B_G\C^{m+l}.
\end{equation}
This map is a fibration over the classifying space $BG$ of $G$ with fiber $f:\C^m \to \C^{m+l}$. In each fiber the map is right-left equivalent to $f$. In practice, when we are interested in a cohomological identity in a concrete degree, we can approximate the classifying space with a finite dimensional manifold (eg. for $G=\C^*$ we can use $\CP^N$ instead of $\CP^{\infty}$ for large $N$). This way the map $B_Gf$ is a map between manifolds, and is locally just a trivial unfolding of $f$. 

If $f\in \CC_l$ and the symmetry satisfies that the equivariant Euler class $\eu(\rho^S) \in \coh^*(BG)$ is not 0, then the cohomology pushforward $(B_Gf)_*: \coh^*(B_G\C^m) = \coh^*(BG) \to \coh^*(B_G\C^{m+l})=\coh^*(BG)$ is defined as multiplication by $\eu(\rhoT)/\eu(\rhoS) \in \coh^*(BG)$, cf. Example~\ref{ex:I22_classes}.

%Hence, the map \eqref{eq:BGf} lies in the classes $\CC_l$, $\CC_l^{\Ma,k}$, or $\CC_l^{\Ma}$ precisely when $f$ belongs to the corresponding $\CCv$ (subject to the condition $\eu(\rhoS)\neq 0$). The same statement holds in the multigerm setting. 

Maps of the type \eqref{eq:BGf} have been a central tool in Thom polynomial theory since \cite{rrtp}. For brevity, we often describe a map $B_Gf$ by saying that we consider $f$ “equivariantly” or “in equivariant cohomology.”

\section{The Thom principle}
\label{sec:ThomPrinciple}

Let $\cl(\Sigma \subset M)\in \coh^\bullet(M)$ be a cohomological invariant (a ``characteristic class'') associated to (not necessarily closed) subvarieties $\Sigma$ of smooth manifolds $M$. Suppose $\cl$ satisfies a suitable consistency condition with respect to pullbacks. The simplest example is {\em ``fundamental class of the closure''}, $\cl(\Sigma\subset M)=[\overline{\Sigma}]$. 
The following intuitive statement we call the Thom principle for $\cl$: 

\medskip

\noindent{\bf Thom Principle for Multisingularities.}  
\textit{For any S-multisingularity (respectively, T-multisingularity) $\multi$, there exists a universal expression $P$---a multivariable polynomial or formal power series---such that, for every `suitably nice map' $f: M \to N$ between manifolds, the class $\cl(\Slocus{\multi}{f} \subset M)$ (respectively, $\cl(\Tlocus{\multi}{f} \subset N)$ is obtained by evaluating $P$ on the characteristic classes associated with $f$.}

\medskip

For detailed discussions of various flavors of the Thom principle and its history, we refer the readers to the recent survey papers \cite{OhmotoSurvey, primer} and references therein. Here we just make a few remarks:
\begin{itemize}
    \item The Thom principle for monosingularities, and $cl$ being the fundamental cohomology class of the closure of $\Sigma$, holds. The polynomial $P$ in this case can be interpreted as the $\K(m,n)$-equivariant fundamental class of $\overline{\eta}\subset \E(m,n)$---after careful treatment of the infinite dimensional and non-compact groups and spaces. This is the field of the classical Thom polynomials.
    \item The nature of the subjects is calculational. Often even the vague Thom principle is sufficient to calculate the universal polynomial $P$ (assuming its existence), and then these universal formulas can be used in enumerative geometry---with ad hoc arguments about their validity.
    \item In all instances of the Thom principle, the concept of ``suitably nice map'' is challenging. 
    \item There is an alternative approach to the Thom principle, that avoids (or relocates) the discussion on ``suitably nice maps''. Namely, redefining the meaning of the class ``$\cl(\Tlocus{\multi}{f})$'' in such a way that the Thom principle holds {\em for all maps}, and then discussing for which maps does the new definition agree with the geometric definition. This program is carried out for the fundamental classes of T-multisingularities in \cite{TOmult}. 
\end{itemize}

In this paper we study the Thom principle for multisingularities, for the Segre-Schwartz-MacPherson characteristic class $\cl(\Sigma\subset M)=\ssm(\Sigma\subset M)$.

\section{Characteristic classes}
\subsection{The \texorpdfstring{$\csm$}{csm} and \texorpdfstring{$\ssm$}{ssm} classes}\label{s:ssm}
We present a brief introduction to the Chern–Schwartz–\-Mac\-Pherson ($\csm$) class and the Segre–Schwartz–MacPherson ($\ssm$) class. For a more extensive treatment as well as discussion of the broader topic of characteristic classes of singular varieties, see the survey \cite{SY} and monographs \cite{SchLectures}, \cite[Ch.~5-7]{BrasseletBook}.

For a variety $X$, let $F(X)$ denote the $\QQ$-vector space of constructible functions. We may treat $F$ as a functor from the category of varieties (with morphism being the proper maps) to the category of $\QQ$-vector spaces. %todo Q or Z??
In \cite{macpherson} MacPherson constructed a unique natural transformation from $F$ to the Borel-Moore homology $c_*$,
%$$ c_*: F(-) \to \coh^{BM}_{\bullet}(-)\,,$$
such that for a compact smooth variety $M$ we have
$$c_*(\id_M)=PD(c_\bullet(TM)) \in \coh^{BM}_\bullet(M)\,.$$
Here $c_\bullet(TM)$ denotes the total Chern class of the tangent bundle and $PD$ is Poincair\'e duality. 
The $\csm$ and $\ssm$ classes of a constructible subset $X$ of a smooth variety $M$ are defined as
$$
\csm(X\subset M):=PD(c_*(1_X))\in \coh^\bullet(M)\,,\qquad
\ssm(X\subset M):=\frac{\ssm(X\subset M)}{c_\bullet(TM)}\in \coh^\bullet(M)\,.\qquad
$$
We omit $M$ in the notation when it is obvious from the context. Here are some basic properties.
%of the $\csm$ and $\ssm$ classes.

\begin{pro} \label{pro:csm}
    Let $M$ and $N$ be smooth varieties.
    \begin{enumerate}
        \item Let $X$ and $Y$ be disjoint constructible subsets of $M$. Then
        $$\ssm(X\cup Y)=\ssm(X)+\ssm(Y)\,.$$
        \item Let $X\subset M$ and $Y\subset N$ be constructible subsets. Then
        $$\ssm (X\times Y\subset M\times N)=\ssm (X\subset M)\boxtimes\ssm (Y\subset N)\,.$$
        \item Let $X\subset M$ be a constructible subset and $U\subset M$ an open subset. Then
        $$\ssm (X\subset M)_{|U}=\ssm(X\cap U\subset U)\,. $$
        \item Let $X \subset M$ be an irreducible subvariety of codimension $a$. Then 
        $$\ssm(X)_{|r}=
        \begin{cases}
            0 &\text{ for } r<a\,, \\
            [X] &\text{ for } r=a\,.
        \end{cases}$$
        Here $|_r$ denotes the restriction to $\coh^r(M)$ and $[X]$ denotes the fundamental class of $X$.
        \item  Let $X\subset M$ be a constructible subset. Suppose that the variety $M$ is projective. Then the $\csm$ class of $X$ determines the Euler characteristic of $X$ 
        $$\pi_*(\ssm(X)\cdot c_\bullet(TM))=\pi_*(\csm(X))=\chi(X)\,,$$
        where $\pi$ denotes the unique map from $M$ to a point. 
    \end{enumerate}   
\end{pro}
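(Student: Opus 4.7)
The plan is to derive all five parts from the defining properties of MacPherson's natural transformation $c_*$ and routine identities for the total Chern class of the ambient tangent bundle; no new ideas are required. The guiding principle is that $\csm$ inherits functoriality from $c_*$, and then $\ssm$ inherits it as well once one checks that the correction factor $c_\bullet(TM)$ behaves correctly under the operation in question.

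First I would handle (1) and (3) together since both are essentially "local" statements. For (1), since $X$ and $Y$ are disjoint we have $1_{X\cup Y}=1_X+1_Y$, and $c_*$ is a $\QQ$-linear map of constructible functions; dividing by $c_\bullet(TM)$ preserves linearity, yielding the additivity. For (3), the key point is that MacPherson's transformation commutes with restriction to an open subvariety: one can verify this either directly from his inductive construction, or from the uniqueness clause applied to the open inclusion $U\hookrightarrow M$ using the constructible function $1_X$ and its restriction $1_{X\cap U}$. Since $c_\bullet(TM)|_U=c_\bullet(TU)$, the analogous statement for $\ssm$ follows upon dividing.

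For (2), the product formula, I would invoke the external-product compatibility of $c_*$ (due to Kwieciński), which gives $\csm(X\times Y)=\csm(X)\boxtimes \csm(Y)$. Then from $T(M\times N)=\pi_M^*TM\oplus \pi_N^*TN$ we get $c_\bullet(T(M\times N))=c_\bullet(TM)\boxtimes c_\bullet(TN)$, and the $\ssm$ identity drops out by dividing. For (4), I would first observe that on the smooth locus $X^{\text{sm}}\subset X$ (open in $X$, codimension $a$ in $M$) one has the classical formula $\csm(X^{\text{sm}}\subset M)=i_*(c_\bullet(TX^{\text{sm}})\cap [X^{\text{sm}}])$, whose lowest-degree part is $[X]$; the singular part of $X$ has strictly smaller dimension so contributes only in degrees $>2a$ by MacPherson's dimension bound on $c_*(1_Z)$ for $\dim Z<\dim X$. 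Writing $\csm(X)=[X]+(\text{higher degree})$ and dividing by $c_\bullet(TM)=1+(\text{positive degree})$ gives $\ssm(X)=[X]+(\text{higher degree})$, which is precisely the claim.

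Finally for (5), the plan is to apply naturality of $c_*$ to the proper map $\pi:M\to \pt$: we obtain $\pi_*c_*(1_X)=c_*(\pi_*1_X)=c_*(\chi(X)\cdot 1_{\pt})=\chi(X)\cdot [\pt]$, where $\pi_*$ on constructible functions is integration against Euler characteristic and $c_*(1_{\pt})=[\pt]$ by normalization. Unwinding definitions, $\pi_*(\csm(X))=\chi(X)$, and since $\ssm(X)\cdot c_\bullet(TM)=\csm(X)$, the two pushforwards in the statement coincide. None of these steps presents a real obstacle; the only point requiring a little care is (3), where one must be explicit about the fact that restriction-to-open is respected by $c_*$ even though an open inclusion is not proper, but this is standard and can be cited directly from \cite{macpherson} or the surveys referenced in \cite{SY, BrasseletBook}.
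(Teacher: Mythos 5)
The paper does not actually prove Proposition~\ref{pro:csm}; these five items are recorded as standard properties of MacPherson's transformation and of $\csm$/$\ssm$ classes, with the reader referred to the surveys \cite{SY}, \cite{SchLectures}, \cite[Ch.~5--7]{BrasseletBook} for details. So there is no proof in the paper against which to compare — your task was to supply the standard argument, and you have done so correctly.

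A few small remarks. Parts (1), (2), (3), (5) are exactly what is expected: linearity of $c_*$ on constructible functions, Kwieci\'nski's external product formula, compatibility of $c_*$ with open restriction, and naturality of $c_*$ applied to $\pi\colon M\to\pt$ together with $c_*(1_{\pt})=[\pt]$; the passage from $\csm$ to $\ssm$ via division by $c_\bullet(TM)$ is routine in each case once one checks that $c_\bullet(TM)$ is compatible with the operation (restriction, external product, etc.), which you do. In part (4) there is one informality: the formula $\csm(X^{\mathrm{sm}}\subset M)=i_*\bigl(c_\bullet(TX^{\mathrm{sm}})\cap[X^{\mathrm{sm}}]\bigr)$ is literally MacPherson's normalization only for a smooth \emph{closed} subvariety, while $X^{\mathrm{sm}}$ is in general only locally closed in $M$. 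This is easily repaired — either restrict to an open $U\subset M$ with $X\cap U=X^{\mathrm{sm}}$ closed in $U$ and invoke (3), or use a resolution $\tilde X\to\overline X$ that is an isomorphism over $X^{\mathrm{sm}}$, push forward $c(T\tilde X)\cap[\tilde X]$, and use naturality — and the conclusion you want (the top Borel--Moore degree of $c_*(1_X)$ is $2\dim X$ with leading term $[\overline X]$, while $c_*(1_{X^{\mathrm{sing}}})$ sits in strictly lower homological degree) is exactly right. Also note the paper's convention $\coh^*=H^{2*}$, so what you call ``degrees $>2a$'' is ``degree $>a$'' in the paper's grading; this is purely a bookkeeping matter and does not affect the argument.
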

% The $\ssm$ and $\csm$ classes can be viewed as a deformations of the fundamental class, due to the forth point of the above proposition. \\
 For a variety equipped with an action of an algebraic group $G$, the $G$-equivariant version of the MacPherson transformation and the $\csm$ class were introduced by Ohmoto in \cite{OhmotoCamb}. The properties listed in Proposition \ref{pro:csm} generalize to this equivariant setting. 
 In Section \ref{s:interpolation} we will need the following result.
 \begin{pro}[{Special case of \cite[Thm. 20]{WeberCSM}}] \label{thm:weber}
     Let $V$ be a finite dimensional vector space equipped with a linear action of a torus $\TT$. Suppose that $0$ is an isolated fixed point. Let $X\subset V$ be a constructible subset such that $0\notin X$. Then the top degree part of the class $\csm(X)$ vanishes, i.e.
     $$ \csm(X)_{|\dim V}=0\in \coh^{*}_\TT(V)  \,, $$
     where $|_{\dim V}$ denotes the component in $\coh^{\dim V}_\TT(V)$.
 \end{pro}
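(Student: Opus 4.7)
The plan is to apply Atiyah--Bott localization on a $\TT$-equivariant compactification of $V$, reducing the vanishing statement to a vanishing of the Euler characteristic of $X$ that follows from the hypothesis $0 \notin X$.

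First, since $V$ is $\TT$-equivariantly contractible to $\{0\}$, restriction induces an isomorphism $\coh^*_\TT(V) \cong \coh^*(B\TT)$, under which $\csm(X)_{|\dim V}$ corresponds to the degree-$\dim V$ homogeneous component of a polynomial in the characters of $\TT$. I would then embed $V$ equivariantly into $\mathbb{P} := \mathbb{P}(V \oplus \C)$, with trivial $\TT$-action on the extra $\C$-factor, extend $\mathbf{1}_X$ by zero to a constructible function on $\mathbb{P}$, and denote the resulting equivariant class by $\widetilde{\csm}_\TT(X) \in \coh^*_\TT(\mathbb{P})$. Since $\mathbb{P}$ is projective, property (5) of Proposition~\ref{pro:csm} combined with Atiyah--Bott localization on $\mathbb{P}$ yields
\[
\chi(X) \;=\; \sum_{p \in \mathbb{P}^\TT} \frac{\widetilde{\csm}_\TT(X)|_p}{\eu(T_p \mathbb{P})}.
\]
At the fixed point $p_0 := [0:\cdots:0:1] \in V$ we have $\widetilde{\csm}_\TT(X)|_{p_0} = \csm_\TT(X \subset V)$ by property (3), and $\eu(T_{p_0}\mathbb{P}) = \eu(V)$; the remaining $\TT$-fixed points of $\mathbb{P}$ lie in the hyperplane at infinity $H = \mathbb{P}(V)$.

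The key input is that $\chi(X) = 0$: by a classical result of Bia\l ynicki--Birula, $\chi(Y) = \chi(Y^\TT)$ for any torus action on a complex algebraic variety $Y$, and since $0$ is the unique $\TT$-fixed point of $V$ and $0 \notin X$, the fixed locus $X^\TT$ is empty. Isolating the $p_0$-contribution in the localization formula thus yields
\[
\csm_\TT(X \subset V) \;=\; -\eu(V) \sum_{p \in \mathbb{P}^\TT \setminus \{p_0\}} \frac{\widetilde{\csm}_\TT(X)|_p}{\eu(T_p \mathbb{P})}.
\]

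The hardest step will be to show that this right-hand side has polynomial degree strictly less than $\dim V$, so that its $\coh^{\dim V}_\TT$-component vanishes — a direct estimate gives only $\leq \dim V$, and the desired cancellation of the putative degree-$\dim V$ contributions must be established. When the weights $w_1, \ldots, w_{\dim V}$ of $\TT$ on $V$ are pairwise distinct, each $p \in H^\TT$ is an isolated $\TT$-fixed point of a local affine chart of dimension $\dim V$, and a careful bookkeeping of the weights of $T_p \mathbb{P}$ together with a bound on $\deg \widetilde{\csm}_\TT(X)|_p$ should give the required estimate — in effect reducing the vanishing at $p_0$ to analogous vanishings at the fixed points at infinity. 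The delicate case is non-generic weights (coincidences among the $w_i$), where $H^\TT$ need not be isolated and A--B must be applied to positive-dimensional fixed components; I expect to handle this either by perturbing to a subtorus with generic weights while preserving $X^\TT = \emptyset$, or by working directly with the characteristic cycle $CC^\TT(\mathbf{1}_X) \subset T^*V$ and using that the only $\TT$-fixed point $(0,0) \in T^*V$ is avoided by the support of $CC^\TT(\mathbf{1}_X)$ — the geometric translation of $0 \notin X$.
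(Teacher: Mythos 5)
The paper does not prove this statement; it cites it directly as a special case of Weber's Theorem 20 in \cite{WeberCSM}, so there is no in-paper proof to compare against. Evaluating your attempt on its own terms: the setup (compactify to $\mathbb{P}(V\oplus\C)$, apply Atiyah--Bott, use $\chi(X)=\chi(X^\TT)=0$) is sound and gives the correct identity
$\csm_\TT(X\subset V) = -\eu(V)\sum_{p\in H^\TT}\widetilde\csm_\TT(X)|_p/\eu(T_p\mathbb{P})$,
but the step you flag as hardest is not a detail --- it is the entire content of the proposition, and the route you sketch toward it is circular. For each isolated fixed point $p\in H^\TT$, the restriction $\widetilde\csm_\TT(X)|_p$ is, by the open-restriction property (3) of Proposition~\ref{pro:csm}, the class $\csm_\TT^{U_p}(X\cap U_p)|_p$ in the affine chart $U_p\cong\C^{\dim V}$ around $p$. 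Since $X\subset V$ avoids the hyperplane at infinity, $p\notin X\cap U_p$, so controlling the top-degree part of $\widetilde\csm_\TT(X)|_p$ is \emph{exactly} the statement of the proposition applied to $U_p$ in place of $V$, in the same dimension. There is no induction available, and the weight bookkeeping you propose only yields $\leq\dim V$ for each summand, as you note; the cancellation of the putative degree-$\dim V$ terms is equivalent to what Weber's theorem asserts, namely that the degree-$\dim V$ part of the restriction at each isolated fixed point equals (local Euler characteristic)$\,\cdot\,\eu(T_p\mathbb{P})$. Without that input, the localization identity constrains only the \emph{sum} of the leading coefficients, not each one individually. The characteristic-cycle alternative you mention at the end is a genuinely different and more promising line (it is closer in spirit to how such results are actually established), but as written it is a one-sentence suggestion, not an argument --- and it would still need to address the case $0\in\overline{X}\setminus X$, where $CC(\mathbf{1}_X)$ can meet $T^*_0V$ even though $0\notin X$. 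In short: correct framing, correct use of $\chi(X)=0$, but the crux is left open and the proposed closure is circular.
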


In certain situations, interpolation properties of this kind --- together with normalization and support axioms --- uniquely determine the $\csm$ class. This viewpoint originates in \cite{MO} with the introduction of characteristic classes called {\em stable envelopes} and their relation to $\csm$ classes established in \cite{RV, FRcsm, AMSS_csm}. In \cite{RR_ssmTp}, this stable–envelope–inspired framework was used to compute SSM–Thom polynomials of monosingularities. In Section~\ref{s:interpolation}, we extend this approach to multisingularities.

\begin{rem}
Proposition~\ref{thm:weber} demonstrates how the hierarchy of strata, ordered by closure containment, can interact with characteristic classes. In the forthcoming work~\cite{JKRRhierarchy}, we will build on a similar such result from~\cite{FeherPatakfalvi}, together with Thom polynomials of multisingularities, to establish a new and previously unknown hierarchy.
\end{rem}
 
\subsection{Chern and Landweber-Novikov classes associated with maps}
For partitions $\lambda$ we use both the traditional notation 
$
\lambda=(\lambda_1\geq \lambda_2 \geq \ldots \geq \lambda_r),
$
and the `multiplicity' notation  
$
\lambda=(1^{a_1},2^{a_2},\dots)
$
meaning that $a_i$ of the parts of $\lambda$ are $i$. E.g. $\lambda=(4,3,3,2,1,1,1)=(1^32^13^24^1)$.
\begin{df}
    For a smooth map $f:M^m\to N^{m+l}$ let $T_f=f^*TN-TM$ be its relative tangent bundle.
    \begin{enumerate}
        \item  Chern classes $c_\bullet(f):=c_\bullet(T_f)$ are defined by the formula 
        $$1+c_1(f)+\dots=\frac{1+f^*c_1(TN)+\dots+f^*c_n(TN)}{1+c_1(TM)+\dots c_m(TM)} \in   \coh^\bullet(M)\,.$$
        For a partition $\lambda$ we consider the class $c_\lambda(f)=\prod_{i=1}^kc_{\lambda_i}(T_f)$.
        \item If $f_*$ is defined in cohomology, the Landweber-Novikov classes $s_\lambda(f):=s_\lambda(T_f)$ are 
        $$s_{\lambda}(f):=f_*c_\lambda(f) \in \coh^\bullet(N).$$ 
    \end{enumerate}
\end{df}

Let $\cvar=(c_1,c_2,\dots)$ denote a set of variables indexed by natural numbers and $\svar=(s_\lambda)$ a set of variables indexed by partitions.  We consider the rings of power series in these variables: $\QQ[[\cvar,\svar]]$ and $\QQ[[\svar]]$. There are two natural gradings on these rings.  The cohomological grading is given by 
$$\deg(c_t)=t\,, \qquad \deg(s_\lambda)=l+|\lambda|=l+a_1+2a_2+\dots\,,$$
where $\lambda=(1^{a_1},2^{a_2},\ldots)$.
When we say that an element is homogeneous we refer to this grading. The second grading is the $\svar$-degree $\deg_s$---it is the degree in $\svar$-variables, i.e. 
$\deg_s(c_i)=0$, $\deg_s(s_\lambda)=1$.
\begin{df} \label{df:substitution}
	Let $f:M\to N$ be a map in $\CC_l$.
	\begin{enumerate}
		\item For a power series $P\in \QQ[[\svar]]$ we define the class
		$P(f) \in \coh^\bullet(N)$
		by the substitution
		$$s_{\lambda} \to s_{\lambda}(f)\,.$$
        \item For a power series $P\in \QQ[[\cvar,\svar]]$ we define the class
		$P(f) \in \coh^\bullet(M)$
		by the substitution
		$$c_t \to c_t(f)\,, \qquad s_{\lambda} \to f^*s_{\lambda}(f)\,. $$
      %  When there is no risk of confusion we write $s_{\lambda}(f)$ instead of $f^*s_{\lambda}(f)$.
	\end{enumerate}
	Both substitutions are homogeneous with respect to the cohomological grading. 
\end{df}

\begin{pro} \label{pro:unique}
	For every polynomial $A\in \QQ[[\cvar]]$ there exists a map $f\in \CC_l$ such that $A(f) \neq 0$. The same holds for a polynomial $B\in \QQ[[\svar]]$, or $C\in \QQ[[\cvar,\svar]]$.
\end{pro}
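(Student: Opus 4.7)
The plan is to prove the strongest case (iii), which implies (i) and (ii), by working in the equivariant Borel-construction framework at the end of Section~\ref{sec:CC_l}. The idea is to exhibit, for any given nonzero polynomial, a single equivariant map $f = B_T g \in \CC_l$ whose Chern and Landweber--Novikov classes --- computed as polynomial expressions in the torus weights --- are algebraically independent up to the relevant degree, so the substitution cannot vanish.

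First I would reduce to polynomials of bounded bidegree. The substitution of Definition~\ref{df:substitution} is homogeneous with respect to the cohomological grading (and with respect to the $s$-grading in cases (ii), (iii)), so a nonzero power series has a nonzero component of some bidegree $(d,k)$, and the nonvanishing of that single component witnesses the nonvanishing of the whole. Hence it suffices to treat the case of a polynomial $P$ with prescribed cohomological degree $\le d$ and $s$-degree $\le k$.

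Next I would construct the witness. Take a torus $T = (\C^*)^r$ of sufficiently large rank and an equivariant Mather prototype $g:\C^m\to\C^{m+l}$ --- or, more flexibly, a multigerm built from disjoint copies of monosingularity prototypes so that the weights $a_1,\dots,a_m$ of $\rhoS$ and $b_1,\dots,b_{m+l}$ of $\rhoT$ can be prescribed as independent characters of $T$. By the construction around~\eqref{eq:BGf}, $f := B_T g$ lies in $\CC_l$, and in $\coh^*(BT) = \QQ[t_1,\dots,t_r]$ we have
\[
c(f) \;=\; \frac{\prod_{j}(1+b_j)}{\prod_{i}(1+a_i)}, \qquad s_\lambda(f) \;=\; c_\lambda(f)\cdot\frac{\prod_j b_j}{\prod_i a_i}.
\]
Since $f^*$ is the identity on $\coh^*(BT)$ for a map of this type, the substitution of Definition~\ref{df:substitution}(2) becomes plain evaluation in the polynomial ring $\QQ[a_i,b_j]$; hence statement (iii), once verified, implies both (i) and (ii).

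The hard part is the algebraic independence claim: for sufficiently large $r,m,l$, the finite set of classes $\{c_i(f)\}_{i\le d}$ together with $\{s_\lambda(f)\}_{|\lambda|\le d+k}$ is algebraically independent in $\QQ[a_i, b_j]$. I would verify this by picking a generic monomial order and computing leading monomials: the $c_i(f)$ have distinct ``pure $b$'' leading terms coming from $\prod_j(1+b_j)$, while the factor $\prod_j b_j/\prod_i a_i$ entering each $s_\lambda(f)$ shifts them into an independent region of the monomial lattice. The delicate point is that requiring $g$ to be an honest Mather prototype imposes linear relations on the weights of each monomial of $g$; this is managed by building $g$ as a disjoint union that includes trivial $A_0$-components, whose weights are unconstrained, so that enough of the $a_i, b_j$ may be treated as free parameters.
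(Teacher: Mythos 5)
Your reduction of (i) and (ii) to (iii) is correct, and so is the choice to work with equivariant maps $f = B_Tg$ in $\CC_l$. The fatal gap is in the algebraic independence claim.

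For a single connected equivariant germ $g$ with $f^*=\id$, push-forward is multiplication by the fixed class $s_0 := \eu(\rhoT)/\eu(\rhoS)$, so your own displayed formula gives
\[
s_\lambda(f) \;=\; s_0\cdot c_\lambda(f) \qquad\text{for every }\lambda.
\]
Hence the image of the substitution of Definition~\ref{df:substitution}(2) lies entirely in the subring $\QQ[\,c_1(f),c_2(f),\ldots,s_0\,]$ and the Landweber--Novikov classes are \emph{not} algebraically independent from the Chern classes and $s_0$: there are visible universal relations such as
\[
s_{11}\,s_{\varnothing} - s_{1}^{2}
\;\longmapsto\;
\bigl(s_0 c_1^2\bigr)\bigl(s_0\bigr)-\bigl(s_0 c_1\bigr)^2 = 0.
\]
This holds for \emph{every} torus-equivariant prototype with connected source, independently of how large the torus is or how generic the weights are, so no monomial-order / leading-term argument can salvage it. Thus the polynomial $C = s_{11}s_\varnothing - s_1^2$ is a counterexample to your independence claim, and your argument proves nothing for it.

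You do mention multigerms built from disjoint copies of prototypes, which is indeed the right remedy (and is what the paper uses). But then two things change that your proposal does not account for. First, $f^*$ is no longer the identity; the source and target of a disjoint sum are different spaces, and the $c$- and $s$-classes live in different cohomology rings. Second, and more to the point, the formula $s_\lambda(f) = c_\lambda(f)\cdot\prod b_j/\prod a_i$ is wrong for a disjoint sum: by Proposition~\ref{pro:sum2}, $s_\lambda$ is \emph{additive} over components, so $s_\lambda(f)$ becomes a sum of terms $s_0^{(j)} c_\lambda^{(j)}$, one per component. This additivity is exactly the mechanism that breaks the rank-one relation $s_\lambda = s_0 c_\lambda$, and using it requires a different argument than leading monomials in a fixed polynomial ring. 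The paper's proof makes this precise: it restricts to a one-parameter subgroup, interprets the $s$-data of a disjoint sum as a $\QQ_+$-linear combination of points $S(x(\avar_i,\sigma_i))$ in a finite-dimensional space $V_s$, shows that a polynomial vanishing on all such combinations vanishes on the linear span, and then proves that span is all of $V_s$ (Lemmas~A.3--A.5). Without a replacement for these steps, your proposal does not close.

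A smaller issue: the source and target weights of an equivariant germ cannot all be ``prescribed as independent characters.'' Even for the $A_0$-prototype $(x_1,\ldots,x_m)\mapsto(x_1,\ldots,x_m,0,\ldots,0)$ the first $m$ target weights equal the source weights, and for $f_\avar=(x_i^{a_i})$ they are $a_i\alpha_i$. The constraints do not doom the argument, but they invalidate the ``free parameters'' framing and should be stated honestly, as the paper does by treating the integers $a_i$ as extra discrete parameters and invoking a density argument.
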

\begin{proof}
    See Propositions \ref{pro:uniqueC1}, \ref{pro:uniqueS1'} and \ref{pro:uniqueSC} in Appendix \ref{ap:unique}.
\end{proof}
\begin{rem}
    It is a well known fact that there is no algebraic relation between the Chern and Ladweber-Novikov classes that holds for all maps between smooth varieties. The above proposition states that the same is true if we restrict ourselves to the smaller class of maps $\CC_l$. This is a folklore result and we were unable to find a reference. For the sake of completeness we prove it in Appendix \ref{ap:unique}.
\end{rem}

\subsection{Characteristic classes of disjoint union} \label{s:union}
\begin{df}
	Let $f:M_1\to N_1$ and $g:M_2\to N_2$ be maps between varieties. We consider the disjoint union maps
	\begin{align*}
		f\sq g&= (f\times \id_{N_2}) \sqcup (\id_{N_1} \times g): (M_1\times N_2)\sqcup (N_1\times M_2) \to N_1\times N_2\,, \\
		\qquad f^{(k)}&=
		f\sq \cdots \sq f \,.
	\end{align*}
\end{df}
\begin{rem}
    The usage of maps $f^{(k)}$ to study Thom polynomials of multisingularities was proposed in \cite[Rem. 3.5]{KazaMulti}.
\end{rem}
\begin{rem}
    The construction of the prototype of a multisingularity from the prototypes of monosingularities (see Section \ref{sec: proto}) is based on the $\sq$ operation.
\end{rem}
\begin{rem}
    Let $G_1$ and $G_2$ be algebraic groups.
    Suppose that the map $f$ is $G_1$-equivariant and $g$ is $G_2$-equivariant. Then the map $f\sq g$ is $G_1\times G_2$-equivariant. When the groups are the same $G:=G_1=G_2$ then we can consider the diagonal action $G$-action.
\end{rem}
\begin{pro}
	Let $f_1$ and $f_2$ be maps in $\CC_l$. Then the map $f_1\sq f_2$ is also in $\CC_l$.
\qed
\end{pro}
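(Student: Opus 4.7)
My plan is to verify, one by one, the three defining properties of $\CC_l$ for $f_1 \sq f_2$: relative dimension $l$, finiteness, and local stability.

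\textbf{Relative dimension.} Writing $f_i: M_i^{m_i} \to N_i^{m_i+l}$, the map $f_1 \times \id_{N_2}$ has source dimension $m_1+(m_2+l)$ and target dimension $(m_1+l)+(m_2+l)$, hence relative dimension $l$; symmetrically for $\id_{N_1} \times f_2$, and the disjoint union preserves this.

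\textbf{Finiteness.} Products with identities preserve finiteness (since $f_i$ are finite), and a disjoint union of two finite maps is finite.

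\textbf{Local stability.} Fix $(y_1, y_2) \in N_1 \times N_2$. The preimage under $f_1 \sq f_2$ decomposes as $(f_1^{-1}(y_1) \times \{y_2\}) \sqcup (\{y_1\} \times f_2^{-1}(y_2))$, and the multigerm at $(y_1, y_2)$ splits accordingly into two groups of branches. Each branch at a point $(x, y_2)$ is the trivial unfolding in the $N_2$-direction of the monogerm $f_1: (M_1, x) \to (N_1, y_1)$; symmetrically for branches at $(y_1, x')$. By local stability of $f_1$ at $y_1$, the first group of branches is right-left equivalent to a trivial unfolding of the prototype of the multisingularity $\multi_1$ realized by $f_1$ at $y_1$; similarly the second group corresponds to some $\multi_2$ for $f_2$ at $y_2$. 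I would then invoke the construction in Section~\ref{sec: proto}: the prototype of $\multi_1 + \multi_2$ is assembled from the prototypes of its monosingularity components using precisely the $\sq$ operation, combined with trivial unfoldings to match target dimensions. A dimension-matching argument then identifies the multigerm of $f_1 \sq f_2$ at $(y_1, y_2)$ with a trivial unfolding of the prototype of $\multi_1 + \multi_2$, which is the required local stability.

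\textbf{Expected obstacle.} The technical heart is the last identification in step~3. While each ingredient is forced by dimension counts and the universality of prototypes, one must carefully track how the target decomposition $(N_1 \times N_2, (y_1,y_2))$ interleaves with the prototype's target, a direct sum indexed by the monosingularity components of $\multi_1 + \multi_2$. The rest of the argument is routine bookkeeping with the prototype and $\sq$ constructions introduced in Sections~\ref{sec: proto} and~\ref{s:union}.
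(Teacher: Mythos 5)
Your argument is correct in outline, but it takes a heavier route than the paper's: the paper simply cites \cite[Thm.~3.3]{MNB}, which is (essentially) the criterion that a multigerm is locally stable if and only if each branch is a stable monogerm and the images of the branches meet with normal crossings. That criterion makes the local-stability check immediate here: each branch of $f_1\sq f_2$ at $(y_1,y_2)$ is a trivial unfolding of a stable branch of $f_1$ or of $f_2$, hence stable; and the branch images take the form $V_i\times N_2$ (from the $f_1$ side) and $N_1\times W_j$ (from the $f_2$ side), so the product structure yields the required transversality both between the two families (a tangent space $T V_i\oplus T N_2$ meets $T N_1\oplus T W_j$ in $TV_i\oplus TW_j$, of the expected codimension) and within each family (inherited from the stability of $f_1$, resp.\ $f_2$, at $y_1$, resp.\ $y_2$).

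By contrast, you route the stability check through the prototype normal form: you use the (correct) fact that a locally stable multigerm is equivalent to a trivial unfolding of the prototype of its multisingularity, and then claim a ``dimension-matching argument'' identifies the multigerm of $f_1\sq f_2$ with the trivial unfolding of the prototype of $\multi_1+\multi_2$. That last identification is exactly where the work lies, and you flag it as the expected obstacle. It is not a gap that breaks the proof, but it is more than bookkeeping: one has to match the block structure in the construction of the prototype of $\multi_1+\multi_2$ in Section~\ref{sec: proto} (which is a direct sum over all branches of both $\multi_1$ and $\multi_2$) against the $\sq$-decomposition (which splits the target as $N_1\times N_2$), and this requires an explicit change of coordinates. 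The normal-crossings criterion avoids this entirely. So: correct, but you would do well to replace the prototype-matching step by the normal-crossings criterion, which is also what the paper is actually invoking.
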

\begin{proof}
    This follows from \cite[Thm. 3.3]{MNB}.
\end{proof}

\begin{pro}\label{pro:sum1}
	Let $f:M_1\to N_1$ and $g:M_2\to N_2$ be maps in $\CC_l$. Let $\pi_{1},\pi_2$ denote the projections from $N_1\times N_2$ onto the factors and $\tilde{\pi}_{1}$, $\tilde{\pi}_2$ denote the projections from $M_1\times N_2$ onto the factors. For any T-multisingularity $\multi$, or S-singularity $(\eta, \multi)$ we have 
	\begin{align*}
	\ssm (\Tlocus{\multi}{f\sq g})&=\sum_{\multi_1+\multi_2=\multi} \pi_1^*\ssm (\Tlocus{\multi_1}{f})\cdot \pi_2^*\ssm (\Tlocus{\multi_2}{g})\,, \\
	\ssm (\Slocus{\eta, \multi}{f\sq g})|_{M_1\times N_2}&=
	\sum_{\multi_1+\multi_2=\multi\,, \eta\in\multi_1} \tilde{\pi}_1^*\ssm (\Slocus{\eta,\multi_1}{f})\cdot \tilde{\pi}_2^*\ssm (\Tlocus{\multi_2}{g})\,.
	\end{align*}
\end{pro}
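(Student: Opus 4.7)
\smallskip
\noindent\textbf{Proof plan.}
The plan is to first establish a set-theoretic decomposition of the multisingularity loci of $f\sq g$ as disjoint unions of products, and then invoke parts (1), (2), and (3) of Proposition~\ref{pro:csm} to read off the formula in cohomology.

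First I would analyze the preimages and germs of the disjoint union map. For $(y_1,y_2)\in N_1\times N_2$, the preimage $(f\sq g)^{-1}(y_1,y_2)$ splits canonically into $f^{-1}(y_1)\times\{y_2\}$ (inside $M_1\times N_2$) and $\{y_1\}\times g^{-1}(y_2)$ (inside $N_1\times M_2$). At a preimage point $(x_1,y_2)\in M_1\times N_2$ the germ of $f\sq g$ equals the germ of $f\times \id_{N_2}$, which is a trivial unfolding of the germ of $f$ at $x_1$; contact equivalence is invariant under trivial unfoldings, so its monosingularity type coincides with $\eta_{x_1}$. Symmetrically for points in $N_1\times M_2$. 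Taking the multiset of these monosingularities over all preimages of $(y_1,y_2)$ shows that
\[
\multi_{(y_1,y_2)}(f\sq g)\;=\;\multi_{y_1}(f)+\multi_{y_2}(g).
\]

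From this I would deduce the stratification identities
\[
\Tlocus{\multi}{f\sq g}\;=\;\bigsqcup_{\multi_1+\multi_2=\multi}\Tlocus{\multi_1}{f}\times\Tlocus{\multi_2}{g},
\]
and, for the source version,
\[
\Slocus{\eta,\multi}{f\sq g}\cap(M_1\times N_2)\;=\;\bigsqcup_{\substack{\multi_1+\multi_2=\multi\\ \eta\in\multi_1}}\Slocus{\eta,\multi_1}{f}\times\Tlocus{\multi_2}{g},
\]
both decompositions being disjoint since distinct $\multi_1$ produce points with different $\multi_{y_1}(f)$. For the source statement, the open-restriction property (Proposition~\ref{pro:csm}(3)) reduces the computation to the open subset $M_1\times N_2$ of the source of $f\sq g$.

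Finally, applying additivity (Proposition~\ref{pro:csm}(1)) to each disjoint union and multiplicativity under exterior product (Proposition~\ref{pro:csm}(2)) to each summand yields the two formulas, using $\ssm(X)\boxtimes\ssm(Y)=\pi_1^*\ssm(X)\cdot\pi_2^*\ssm(Y)$ (resp.\ with $\tilde\pi_i$). The main subtlety I anticipate is verifying the germ identification above at multi-point preimages: one must check that recording a germ of $f\sq g$ at a point of $M_1\times N_2$ really gives a trivial unfolding of the corresponding germ of $f$ and does not see the other sheet $\id_{N_1}\times g$, which is clear locally because the two components of the source of $f\sq g$ are disjoint. Once this local identification is in place, the rest is a direct application of the basic properties of $\ssm$.
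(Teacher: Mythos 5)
Your proposal follows essentially the same route as the paper's (very brief) proof: derive the set-theoretic decompositions $\Tlocus{\multi}{f\sq g}=\bigsqcup_{\multi_1+\multi_2=\multi}\Tlocus{\multi_1}{f}\times\Tlocus{\multi_2}{g}$ and the analogous one on $M_1\times N_2$, then apply additivity, multiplicativity, and the open-restriction property of $\ssm$ from Proposition~\ref{pro:csm}. You supply more detail than the paper does---in particular the germ-level justification that $f\sq g$ is a trivial unfolding of $f$ near points of $M_1\times N_2$---but the argument is the same.
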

\begin{proof}
	Lemma follows from the decompositions
	$$\Tlocus{\multi}{f\sq g}=\bigsqcup_{\multi_1+\multi_2=\multi} \Tlocus{\multi_1}{f}\times \Tlocus{\multi_2}{g}\,, \qquad \Slocus{\multi}{f\sq g}=\bigsqcup_{\multi_1+\multi_2=\multi\,, \eta\in\multi_1} \Slocus{\multi_1}{f}\times \Tlocus{\multi_2}{g}$$
	and properties of the $\ssm$ class, see Section \ref{s:ssm}. 
\end{proof}
\begin{cor} \label{cor:sum}
	Consider the situation form Proposition \ref{pro:sum1}. Suppose that the maps $f$ and $g$ have the same target $N$, i.e. $N:=N_1=N_2$. Let $\Delta:N\to N\times N$ be the diagonal map and $\Gamma:M_1\to M_1\times N$ the graph of $f$. We have
	\begin{align*}
		\Delta^*\ssm (\Tlocus{\multi}{f\sq g})&=\sum_{\multi_1+\multi_2=\multi} \ssm (\Tlocus{\multi_1}{f})\cdot\ssm (\Tlocus{\multi_2}{g})\,, \\
		\Gamma^*\ssm (\Slocus{\multi}{f\sq g})&=\sum_{\multi_1+\multi_2=\multi\,, \eta\in\multi_1} \ssm (\Slocus{\multi_1}{f})\cdot f^*\ssm (\Tlocus{\multi_2}{g})\,.
	\end{align*}
\end{cor}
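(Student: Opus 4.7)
The plan is to derive both identities directly from Proposition~\ref{pro:sum1} by pulling back along $\Delta$ and $\Gamma$ respectively, using the compositional identities $\pi_1\circ \Delta = \pi_2\circ \Delta = \id_N$, together with $\tilde{\pi}_1\circ \Gamma = \id_{M_1}$ and $\tilde{\pi}_2\circ \Gamma = f$, and the multiplicativity of cohomology pullback.

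For the T-version, I would apply $\Delta^*$ to the first formula of Proposition~\ref{pro:sum1}. Since pullback is a ring homomorphism,
$$
\Delta^*\!\left(\pi_1^*\ssm(\Tlocus{\multi_1}{f})\cdot \pi_2^*\ssm(\Tlocus{\multi_2}{g})\right)
= \ssm(\Tlocus{\multi_1}{f})\cdot \ssm(\Tlocus{\multi_2}{g})
$$
because $\pi_i\circ\Delta=\id_N$ kills the pullback via each projection. Summing over decompositions $\multi_1+\multi_2=\multi$ produces the stated cup product formula in $\coh^*(N)$.

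For the S-version, observe that the graph $\Gamma\colon M_1\to M_1\times N$ lands in the $M_1\times N_2$ component of the disjoint-union source of $f\sq g$, so pulling $\ssm(\Slocus{\multi}{f\sq g})$ back along $\Gamma$ is the same as pulling back the restriction $\ssm(\Slocus{\multi}{f\sq g})|_{M_1\times N_2}$ already computed in Proposition~\ref{pro:sum1}. Applying $\Gamma^*$ to that formula and using $\tilde\pi_1\circ\Gamma=\id_{M_1}$ and $\tilde\pi_2\circ\Gamma=f$ then converts $\tilde\pi_1^*\ssm(\Slocus{\multi_1}{f})$ into $\ssm(\Slocus{\multi_1}{f})$ and $\tilde\pi_2^*\ssm(\Tlocus{\multi_2}{g})$ into $f^*\ssm(\Tlocus{\multi_2}{g})$, yielding the second identity term by term.

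There is essentially no obstacle here: the substantive content has already been extracted in Proposition~\ref{pro:sum1}, and the corollary reduces to cohomological functoriality. The only point worth explicitly noting — trivial but not vacuous — is that $\Gamma$ factors through the $M_1\times N_2$ summand of the source of $f\sq g$ rather than touching the other summand $N_1\times M_2$; this is immediate from $\Gamma(x)=(x,f(x))$ together with the identification $N_2=N$, and justifies the replacement of the full class by its restriction before pulling back.
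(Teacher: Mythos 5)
Your proposal is correct and is exactly what the corollary amounts to: the paper states it without proof because it follows by pulling back the identities of Proposition~\ref{pro:sum1} along $\Delta$ and $\Gamma$, using $\pi_i\circ\Delta=\id_N$, $\tilde\pi_1\circ\Gamma=\id_{M_1}$, $\tilde\pi_2\circ\Gamma=f$, and the ring-homomorphism property of pullback. Your remark that $\Gamma$ factors through the $M_1\times N_2$ summand of the source of $f\sq g$ is the one small point worth making explicit, and you handle it correctly.
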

\begin{pro}\label{pro:sum2}
	Let $f:M_1\to N_1$ and $g:M_2\to N_2$ be maps in $\CC_l$. We have:
	\begin{align*}
		&s_\lambda(f\sq g)=\pi_1^*s_\lambda(f)+ \pi_2^*s_\lambda(g) \\
		&c_k(f\sq g)_{|M_1\times N_2}=\tilde{\pi}_1^*c_k(f)\,,
	\end{align*}	
	where $\tilde{\pi}_1:M_1\times N_2\to M_1$ is the standard projection.
\end{pro}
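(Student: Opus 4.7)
The strategy is to reduce everything to an analysis of the relative tangent bundle of $f\sq g$ component by component, and then apply base change for the pushforward in the $s_\lambda$ case.

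First I would compute $T_{f\sq g}$ on each of the two components of its domain. On $M_1\times N_2$ the map $f\sq g$ restricts to $f\times \id_{N_2}$, so
\[
(f\sq g)^*T(N_1\times N_2)\bigl|_{M_1\times N_2} = \tilde\pi_1^* f^* TN_1 \oplus \tilde\pi_2^* TN_2,
\]
while $T(M_1\times N_2) = \tilde\pi_1^* TM_1 \oplus \tilde\pi_2^* TN_2$. The $TN_2$ summands cancel, giving $T_{f\sq g}|_{M_1\times N_2} = \tilde\pi_1^* T_f$. An identical computation on $N_1\times M_2$ yields $T_{f\sq g}|_{N_1\times M_2} = \tilde\pi_2^* T_g$ (with the analogous projections for the second component). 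Applying $c_k(\cdot)$ immediately gives the second equation of the proposition, and more generally $c_\lambda(f\sq g)|_{M_1\times N_2} = \tilde\pi_1^* c_\lambda(f)$.

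For the first equation, I split the pushforward according to the two components:
\[
s_\lambda(f\sq g) = (f\sq g)_* c_\lambda(f\sq g) = (f\times \id_{N_2})_*\bigl(\tilde\pi_1^* c_\lambda(f)\bigr) + (\id_{N_1}\times g)_*\bigl(\tilde\pi_2^* c_\lambda(g)\bigr),
\]
where I used the computation of $c_\lambda$ from the previous step. Now I invoke base change for the Cartesian square
\[
\begin{tikzcd}
M_1\times N_2 \arrow[r,"\tilde\pi_1"] \arrow[d,"f\times \id"'] & M_1 \arrow[d,"f"] \\
N_1\times N_2 \arrow[r,"\pi_1"'] & N_1
\end{tikzcd}
\]
which, since $f$ is proper (as $f\in\CC_l$), gives $(f\times \id)_*\tilde\pi_1^* = \pi_1^* f_*$. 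Hence the first term equals $\pi_1^* s_\lambda(f)$, and symmetrically the second term equals $\pi_2^* s_\lambda(g)$. Adding them yields the desired identity.

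There is no real obstacle: the argument reduces to a tangent-bundle bookkeeping together with a single application of the base change formula for a trivially Cartesian square. The only point that deserves a moment's attention is that $f$ (and $g$) being in $\CC_l$ guarantees properness, so that the pushforwards in question are defined and base change applies.
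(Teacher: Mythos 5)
Your proof is correct and takes essentially the same approach as the paper: the paper's one-line proof also hinges on the relative tangent bundle identity $T_{f\sq g}|_{M_1\times N_2}=\tilde\pi_1^*T_f$, and you have simply supplied the tangent-bundle bookkeeping and the base-change/K\"unneth step for the pushforward that the paper leaves implicit.
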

\begin{proof}
	%Let $\pi:{M_1\times N_2} \to M_1$ be the projection.
    The statement follows from isomorphism of relative tangent bundles $T_{f\sq g}|_{M_1\times N_2}=\tilde{\pi}^*T_f$.
\end{proof}

\section{SSM-Thom polynomials for multisingularities} \label{s:conjecture}
In the rest of the paper we assume that $l\ge 1$.
\begin{con} \label{con:SSM}
	The Thom principle holds for $\ssm$ classes of multisingularities.
	\begin{description}
		\item[T] For every T-multisingularity $\multi$ there exists a power series $\ThT{\multi} \in \QQ[[\svar]]$, called target SSM-Thom polynomial of $\multi$, such that for every map $f:M\to N$ in $\CC_l$ we have
		$$\ThT{\multi}(f)=\ssm(\Tlocus{\multi}{f}) \cdot |\Aut(\multi)| \in \coh^\bullet(N)\,. $$
		\item[S] For every S-multisingularity $\multi$ there exists a power series $\ThS{\multi} \in \QQ[[\cvar,\svar]]$, called source SSM-Thom polynomial of $\multi$, such that for every map $f:M\to N$ in $\CC_l$ we have
		$$\ThS{\multi}(f)=\ssm(\Slocus{\multi}{f}) \cdot |\Aut(\multi)| \in \coh^\bullet(M)\,. $$
	\end{description}
\end{con}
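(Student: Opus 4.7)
My plan is to establish the conjecture within the Mather range $\scodim(\multi) \le M(l)$ by reducing to monosingularities and exploiting the disjoint-union formulas of Proposition \ref{pro:sum1} and Corollary \ref{cor:sum}. For a monosingularity $\eta$, the equivariant SSM theory of \cite{OhmotoCamb} applied to the contact orbit closure $\overline{\eta} \subset \E(m,m+l)$ produces, via the Chern roots of source and target bundles, a universal power series $\ThS{\eta} \in \QQ[[\cvar]]$ that evaluates correctly on any $f \in \CC_l$ by naturality of the equivariant MacPherson transformation. Pushing forward along the proper map $f$ yields $\ThT{\eta} \in \QQ[[\svar]]$, and Proposition \ref{pro:unique} guarantees uniqueness of both expressions. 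This handles the base case.

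For general $\multi$ I would induct on $|\multi|$ using the disjoint-union maps $f^{(k)}$ with $k=|\multi|$. Corollary \ref{cor:sum} expresses $\Delta^* \ssm(\Tlocus{\multi}{f^{(k)}})$ as a sum indexed by decompositions $\mzeta + \multi' = \multi$ of products $\ssm(\Tlocus{\mzeta}{f}) \cdot \ssm(\Tlocus{\multi'}{f})$, while Proposition \ref{pro:sum2} shows that the Chern and Landweber--Novikov classes of $f^{(k)}$ are built from those of $f$ by pullback alone. Separating the top term (corresponding to the splitting $\varnothing + \multi$) and moving the lower-rank contributions --- which are known by the inductive hypothesis and by the multisingularity version already valid for $f^{(k)}$ at \emph{smaller} values of $|\multi|$ --- to the other side, one obtains an expression for $\ssm(\Tlocus{\multi}{f})$ as the evaluation of a power series in $\svar$. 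Universality then follows from Proposition \ref{pro:unique}. The combinatorial factor $|\Aut(\multi)|$ enters naturally as the number of orderings of the preimages producing the same unordered configuration in $f^{(k)}$. The source version is parallel, using $\Gamma^*$ instead of $\Delta^*$.

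The main obstacle is that the maps $f^{(k)}$ introduce their own ``diagonal-collision'' strata: a target point $y \in N$ lies in $\Tlocus{\multi}{f^{(k)}}$ only when the preimages of $y$ under the $k$ copies of $f$ are appropriately distributed, but closures of distinct multisingularity strata interact, so a priori the inductive recursion might yield a class supported merely on $\overline{\Tlocus{\multi}{f}}$ rather than on $\Tlocus{\multi}{f}$ itself. Disentangling the true contribution is precisely the role of the Structure Theorem (Corollaries \ref{cor:indT}, \ref{cor:IndTver}) and the Interpolation Theorem \ref{thm:Interpolation}, which characterize $\ThT{\multi}$ uniquely through normalization together with support/interpolation constraints on the equivariant prototypes of Section \ref{sec: proto}; combined, they yield an algorithmic and well-defined construction inside $\CC_l^{\Ma}$. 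Outside the Mather range, one would need either a redefinition of $\ssm(\Tlocus{\multi}{f})$ in the spirit of \cite{TOmult}, so that the principle becomes an unconditional statement of constructible-function calculus, or a stabilization argument for the coefficients; both require genuinely new input, which is why the statement is left as a conjecture.
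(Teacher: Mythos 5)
The statement is Conjecture~\ref{con:SSM}, which the paper does \emph{not} prove --- it remains open. Section~\ref{sec:induction} derives the Structure Theorem \emph{assuming} this conjecture, and Theorem~\ref{thm:Interpolation} converts the degree-cut Conjecture~\ref{con:SSM2} into a finite computable check, reported to pass in all computed cases but not established in general. Your proposal therefore sets out to prove something the authors themselves leave conjectural, and the steps misfire concretely. The base case is itself unestablished: there is no theorem asserting that the $\K$-equivariant $\ssm$ class of $\overline{\eta}\subset\E(m,m+l)$ evaluates to $\ssm(\Slocus{\eta}{f})$ for all $f\in\CC_l$; the paper describes the monosingularity SSM program as ``introduced'' in \cite{OhmotoCamb,ohmotoSMTP}, not proved, and the infinite-dimensionality of $\E(m,m+l)$ together with the right class of admissible maps is precisely the delicate point.

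Your induction does not close. Corollary~\ref{cor:sum} with $g=f$ gives
\[
\Delta^*\ssm(\Tlocus{\multi}{f\sq f})=\sum_{\multi_1+\multi_2=\multi} \ssm (\Tlocus{\multi_1}{f})\cdot\ssm (\Tlocus{\multi_2}{f}),
\]
and the split $\multi_1=\varnothing$, $\multi_2=\multi$ puts the unknown $\ssm(\Tlocus{\multi}{f})$ back on the right, multiplied by $\ssm(\Tlocus{\varnothing}{f})$; no term drops in rank. Moreover the left-hand side is a multisingularity class of another map $f\sq f\in\CC_l$ and is equally unknown without the conjecture you are proving. In the paper this identity is used as a \emph{consistency constraint on already-assumed} Thom polynomials (Lemmas~\ref{lem:MSmult}, \ref{lem:Nmult}, \ref{lem:Mmult}), driving the proofs of the Structure Theorems under the standing hypothesis that Conjecture~\ref{con:SSM} holds --- it is not an existence mechanism. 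Invoking the Structure Theorem to ``disentangle the contributions'' is therefore circular, and the Interpolation Theorem only proves an equivalence of condition~\eqref{w:Interpolation} with conditions~\eqref{i1}--\eqref{i2}, leaving the latter to be verified by explicit computation degree by degree. Your closing acknowledgment of the gap is accurate, but it applies to the whole proposal, not merely to the regime outside the Mather range.
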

 In Section \ref{sec:induction} we prove structure theorems for target and source SSM-Thom polynomials, assuming the conjecture above. If the Thom polynomials $\ThT{\multi}$ and $\ThS{\multi}$ exist, then they are uniquely determined by the conjectured property, due to Proposition \ref{pro:unique}.
 \begin{rem}
 Conjecture \ref{con:SSM} has a fundamental class version (the lowest degree part of the $\ssm$ class). It was stated in \cite[Thm. 2.2]{KazaMulti}. Its `target version' was proved in \cite{TOmult}.
 \end{rem}
 \begin{rem}
     Conjecture \ref{con:SSM}[S] implies Conjecture \ref{con:SSM}[T]. The polynomial $\ThS{\multi}$ determines $\ThT{\multi}$. We study the relation between them in Section \ref{s:F}.
 \end{rem}
 In Mather's nice dimension range we consider a degree-cut version of Conjecture \ref{con:SSM}. In many cases we can verify this version of the conjecture.
 \begin{df}
     For a natural number $k\in \NN$ we use the notation $|_k:\coh^\bullet(-)\to \coh^k(-)$ and $|_{\le k}:\coh^\bullet(-)\to \coh^{\le k}(-)$ for the standard projection maps.
 \end{df}
\begin{con} \label{con:SSM2}
        The Thom principle holds for $\ssm$ classes of Mather multisingularities up to the the degree given by the Mather bound. Let $\multi$ be a Mather T-, or S-multisingularity.
        \begin{description}
		\item[T] Let $k\le M(l)+l$ be a degree bound.  There exists a polynomial $\ThTMa{\multi}{k} \in \QQ[\svar]$, such that for every map $f:M\to N$ in $\CC_l$ we have
		$$\ThTMa{\multi}{k}(f)=\ssm(\Tlocus{\multi}{f})_{|\le k} \cdot |\Aut(\multi)| \in \coh^{\le k}(N)\,, $$
		\item[S] Let $k\le M(l)$ be a degree bound. There exists a polynomial $\ThSMa{\multi}{k} \in \QQ[[\cvar,\svar]]$, such that for every map $f:M\to N$ in $\CC_l$ we have
		$$\ThSMa{\multi}{k}(f)=\ssm(\Slocus{\multi}{f})_{|\le k} \cdot |\Aut(\multi)| \in \coh^{\le k}(M)\,. $$
	\end{description}
\end{con}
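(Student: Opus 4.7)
The plan is to reduce the target statement [T] to the source statement [S], since $\ThS{\multi}$ determines $\ThT{\multi}$ (as noted in the remark following Conjecture~\ref{con:SSM}) via the relation of Section~\ref{s:F}. For the source case, I would combine three ingredients: (i) the Interpolation Theorem~\ref{thm:Interpolation}, which axiomatically characterizes SSM-Thom polynomials via Maulik--Okounkov-style constraints, (ii) the recursive structure expressed by~\eqref{eq:IntoMain}, which reduces the polynomials $\ThT{\multi}$ to the atomic series $\KazT{\multi}$, and (iii) the prototype construction of Section~\ref{sec: proto}, which provides equivariant models where $\ssm$-classes can be computed directly.

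Fix a degree bound $k \le M(l)$. The first observation is that only finitely many Mather multisingularities $\multi$ satisfy $\scodim(\multi) \le k$, and for each such $\multi$ only finitely many Landweber--Novikov monomials in $\svar$ can appear below degree $k$. Hence $\ThSMa{\multi}{k}$ and $\ThTMa{\multi}{k}$ are genuine polynomials, not merely truncations of power series. I would then proceed by induction on $\scodim(\multi)$: for each Mather monosingularity, construct a candidate $\KazT{\multi}$ algorithmically from the equivariant prototype using the interpolation axioms together with Proposition~\ref{thm:weber} applied in torus-equivariant cohomology; and for multisingularities, assemble $\ThTMa{\multi}{k}$ via the exponential formula~\eqref{eq:IntoMain} truncated at degree~$k$, using Proposition~\ref{pro:sum1} and Corollary~\ref{cor:sum} to match the disjoint-union geometry.

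To promote the construction from prototypes to arbitrary maps in $\CC_l$, I would invoke the uniqueness afforded by Proposition~\ref{pro:unique}: the equivariant prototypes form a sufficiently rich family of test maps that any polynomial satisfying the conjectured identity on them is uniquely determined. Then, for a general map $f \in \CC_l^{\Ma}$, Definition~\ref{def:CMa} guarantees that $f$ is locally (at each target point) a trivial unfolding of a prototype, and the local-to-global behavior of $\ssm$ classes (parts (1)--(3) of Proposition~\ref{pro:csm}) transports the prototype identity to $f$.

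The main obstacle will be controlling the interaction between the degree cut and non-Mather contributions. One must show that multisingularities of $\scodim > M(l)$, whose behavior is not a priori universal, cannot contaminate the $\ssm$-class in degrees~$\le k$. This relies on the Mather range codimension bound recalled in Section~\ref{sec:Mather}, which ensures such loci have codimension at least $M(l)+1 > k$ in the source, so their $\ssm$-class vanishes in the relevant degrees by part~(4) of Proposition~\ref{pro:csm}. A secondary subtlety is verifying that the interpolation axioms, which were originally designed for closed strata, correctly capture the non-closed multisingularity loci $\Slocus{\multi}{f}$ and $\Tlocus{\multi}{f}$; this is exactly the content of the claim in Section~\ref{s:interpolation} that the support axiom manifests algebraically as an interpolation constraint, and it is precisely this reformulation that makes the program tractable.
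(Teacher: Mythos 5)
This statement is labeled a \emph{Conjecture} in the paper (Conjecture~\ref{con:SSM2}), and the paper does \emph{not} prove it. The authors explicitly write that ``in Section~\ref{s:interpolation}, we will specify an equivalent version of the above conjecture that can be verified using computer computations. Conjecture~\ref{con:SSM2} holds in all the computed examples.'' What the paper actually proves is the Interpolation Theorem~\ref{thm:Interpolation}, which is an \emph{if-and-only-if reformulation}: given candidate linear series $\KazT{\multi}$, the induced $A_\multi$ satisfy the Thom-polynomial identity for all $f\in\CC_l$ precisely when two finite families of algebraic conditions on prototypes hold. This converts the conjecture into a concrete computation for each $(\multi_0, k, l)$, but the \emph{existence} of a consistent solution $\{\KazT{\multi}\}$ is not established in general.

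That is exactly the gap in your proposal. You write that you would ``construct a candidate $\KazT{\multi}$ algorithmically from the equivariant prototype using the interpolation axioms together with Proposition~\ref{thm:weber},'' and then verify it on general maps. But the interpolation axioms --- condition~(1) normalizing on the prototype of each $\eta\in\multi_0$ and condition~(2) imposing vanishing on the prototypes of all other monosingularities --- form an overdetermined linear system. Nothing in the Structure Theorems (which only constrain the \emph{shape} of the answer, assuming it exists) or in Proposition~\ref{thm:weber} (which supplies only the easy direction, i.e.\ necessity of the vanishing constraints) guarantees that this system has a solution. Producing that solution is precisely what the conjecture asserts and what the paper leaves to computer verification in the cases tabulated on the Thom Polynomial Portal. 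Your plan therefore does not constitute a proof but a restatement of the reduction already carried out in Section~\ref{s:interpolation}.

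Two subsidiary issues. First, your opening reduction of [T] to [S] is inverted relative to the paper's flow: Proposition~\ref{pro:F} shows $\FF(\ThS{\multi})=\ThT{\multi}$, i.e.\ [S] $\Rightarrow$ [T], so reducing [T] to [S] only transfers the burden --- and you then immediately switch to constructing target objects $\KazT{\multi}$ and $\ThTMa{\multi}{k}$ without ever addressing [S]. Second, the Interpolation Theorem as stated in the paper concerns only the target version; the authors remark that a source analogue ``could be phrased and proved similarly'' but do not carry this out, so appealing to it for [S] would require an additional argument you have not supplied.
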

  In Section \ref{s:interpolation}, we will specify an equivalent version of the above conjecture that can be verified using computer computations. Conjecture \ref{con:SSM2} holds in all the computed examples. We present some of the computed Thom polynomials in Appendix \ref{ap:examples}. We uploaded many more examples to the Thom polynomial portal \cite{TPP}.

\section{Structure theorems for SSM-Thom polynomials} \label{sec:induction}
To keep track of the various notions in the next sections the reader is advised to consult Figure~\ref{fig:latest}.
The proofs in this section follow the ideas of Kazarian \cite{KazaSapporo,KazaMulti}. In Sections \ref{sec:induction1} and \ref{sec:source} we present only the results, their proofs are in Sections \ref{sec:induction3} and \ref{sec:induction4}.
\begin{figure}
\begin{tikzcd}[remember picture,column sep=2cm,row sep=.8cm]
  \ssm\left((\eta,\multi)\text{-locus in source}\right)
 %\parbox{2cm}{\centering ssm($\multi$-locus in source)}
 \ar[<->,d,"\cdot|\Aut|"]
 & & 
 %\parbox{2cm}{\centering ssm($\multi$-locus in target)} 
  \ssm(\multi\text{-locus in target})
 \ar[<->,d,"\cdot|\Aut|"]
 \\
 \ThS{(\eta,\multi)} \ar[->,d,bend left=30,"s\text{-constant part}" pos=.4] 
 \ar[d,<-,bend right=30,swap,"\MS\cdot \sum \KazS{\multi_{I_1}}\!\!\KazT{\multi_{I_2}}\!\!\!\cdots \KazT{\multi_{I_r}}" pos=.461] 
 \ar[->,rr,"\FF",bend left=10]
 & & 
 \ThT{\multi} \ar[->,d,bend right=30,swap,"s\text{-linear part}" pos=.4] 
 \ar[d,<-,bend left=30,"\MS\cdot \sum \KazT{\multi_{I_1}}\!\!\!\cdots \KazT{\multi_{I_r}}" pos=.4617]\\
 \KazS{(\eta,\multi)} \ar[->,rr,"\FF",bend left=10]
 & & \KazT{\multi} 
 \\ 
 \\
  \end{tikzcd}
\begin{tikzpicture}[overlay,remember picture]
\draw[blue] (-14.8,-1) to[out=-80,in=100] (-12.15,-1.9) to[out=80,in=-100] (-9.5,-1) ;
\node[blue] at (-12,-2.2) {S-singularities};
\draw[blue] (-5.4,-1) to[out=-80,in=100] (-2.9,-1.9) to[out=80,in=-100] (-0.4,-1) ;
\node[blue] at (-2.7,-2.2) {T-singularities};
%\node[red] at (-6.5,-1.5) {$\MS=\exp(\MSlog)$};
\end{tikzpicture}
\caption{The interrelations among the various concepts of Section \ref{sec:induction}.}\label{fig:latest}
%\caption{The schematic family tree of the various concepts in Section \ref{sec:induction}} \label{fig:latest}
\end{figure}

\subsection{Target induction} \label{sec:induction1}
In this section we assume Conjecture \ref{con:SSM}[T].
\begin{thm} \label{thm:MS}
	Thom polynomial $\ThT{\varnothing}$ is an invertible element of $\QQ[[\svar]]$. There exists a linear power series $\MSlog \in \QQ[[\svar]]$ such that
	$$\exp(\MSlog)=\ThT{\varnothing}\,.$$
\end{thm}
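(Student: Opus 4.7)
The plan is to prove the theorem via three ingredients: (i) invertibility of $\ThT{\varnothing}$, via evaluation on a specific map; (ii) a multiplicative functional equation coming from the disjoint-union construction $\sq$; and (iii) passage to the logarithm, where additivity forces linearity. For invertibility, I would exhibit a map $f\in\CC_l$ for which $\Tlocus{\varnothing}{f}$ is a nonempty Zariski open subset of $N$---for instance, a closed embedding $M^m\hookrightarrow N^{m+l}$ of compact manifolds, which is finite and locally stable. Then $\Tlocus{\varnothing}{f}=N\setminus f(M)$, and by Proposition \ref{pro:csm}(1),(4) we have
\[
\ssm(\Tlocus{\varnothing}{f})=\ssm(N)-\ssm(f(M))=1+(\text{terms of positive cohomological degree}),
\]
so its degree-$0$ component equals $1$. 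Since the substitution of Definition \ref{df:substitution} preserves the cohomological grading, the constant term of $\ThT{\varnothing}\in\QQ[[\svar]]$ is $1$, which gives invertibility.

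For step (ii), I would apply Proposition \ref{pro:sum1} to $f\sq g$ for arbitrary $f,g\in\CC_l$. Because $\varnothing=\varnothing+\varnothing$ is the unique decomposition of $\varnothing$ as a sum of T-multisingularities, the proposition reads
\[
\ssm(\Tlocus{\varnothing}{f\sq g})=\pi_1^*\ssm(\Tlocus{\varnothing}{f})\cdot\pi_2^*\ssm(\Tlocus{\varnothing}{g}).
\]
Rewriting both sides via Conjecture \ref{con:SSM}[T], and combining with the additivity $s_\lambda(f\sq g)=\pi_1^*s_\lambda(f)+\pi_2^*s_\lambda(g)$ of Proposition \ref{pro:sum2}, one obtains the cohomological identity
\[
\ThT{\varnothing}\bigl(\pi_1^*s_\lambda(f)+\pi_2^*s_\lambda(g)\bigr)=\ThT{\varnothing}\bigl(\pi_1^*s_\lambda(f)\bigr)\cdot\ThT{\varnothing}\bigl(\pi_2^*s_\lambda(g)\bigr)
\]
in $\coh^\bullet(N_1\times N_2)$. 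Invoking Proposition \ref{pro:unique} independently for $f$ and for $g$---so that no nontrivial polynomial relation holds universally between their Landweber--Novikov classes---this cohomological identity promotes to the formal identity
\[
\ThT{\varnothing}(\svar+\svar')=\ThT{\varnothing}(\svar)\cdot\ThT{\varnothing}(\svar')\quad\text{in}\quad\QQ[[\svar,\svar']],
\]
where $\svar,\svar'$ denote two disjoint copies of the $s$-variables.

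Finally, set $\MSlog:=\log\ThT{\varnothing}\in\QQ[[\svar]]$, well-defined with vanishing constant term by step (i). Taking $\log$ of the functional equation yields $\MSlog(\svar+\svar')=\MSlog(\svar)+\MSlog(\svar')$. Decomposing $\MSlog=\sum_{n\ge1}M_n$ by $\svar$-degree (Section \ref{s:union}) and matching total $\svar$-degree on both sides, each $M_n$ satisfies the same additivity; expanding any monomial in $M_n$ of $\svar$-degree $n\ge2$ binomially produces nonzero cross terms of bidegree $(k,n-k)$ with $1\le k\le n-1$, which can only cancel if the coefficient vanishes. Hence $M_n=0$ for $n\ge 2$, so $\MSlog$ is linear, completing the proof. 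The main obstacle is step (ii): lifting the geometric multiplicativity of $\sq$ to an algebraic identity in two independent sets of variables, which demands Proposition \ref{pro:unique} in its effectively bivariate form (joint algebraic independence of Landweber--Novikov classes of two separately chosen maps in $\CC_l$). Steps (i) and (iii) are routine once the standard properties of $\ssm$ and the linearity-of-additive-series fact are in hand.
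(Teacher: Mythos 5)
Your strategy — (i) constant term $1$ via the $\ssm$ computation, (ii) a multiplicative functional equation from $\sq$, (iii) logarithm and linearity — is conceptually the same as the paper's, but you take a different path at step (ii) that changes what auxiliary result you need. The paper applies the disjoint-union construction to $f \sq f = f^{(2)}$ and restricts along the diagonal $\Delta\colon N\to N\times N$; by Proposition~\ref{pro:sum2} this turns the Landweber--Novikov classes into $2s_\lambda(f)$, so the multiplicativity becomes the \emph{univariate} scaling identity $\ThT{\varnothing}(2\svar)(f)=\ThT{\varnothing}(f)^2$, which Proposition~\ref{pro:unique} (as actually stated) promotes to a formal identity in $\QQ[[\svar]]$. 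Linearity then follows from Lemma~\ref{lem:linearity}, which concludes linearity from the scaling relation $\MSlog(2\svar)=2\MSlog(\svar)$. You instead work with two independent maps $f,g$ and keep the product $N_1\times N_2$, arriving at the \emph{bivariate} identity $\ThT{\varnothing}(\svar+\svar')=\ThT{\varnothing}(\svar)\cdot\ThT{\varnothing}(\svar')$ in $\QQ[[\svar,\svar']]$, and then use the additive-power-series argument to get linearity.

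The weak point in your route is exactly the one you flag at the end: Proposition~\ref{pro:unique} as stated is a one-map statement (for each nonzero $B\in\QQ[[\svar]]$ there is a single $f$ with $B(f)\neq 0$; the $\QQ[[\cvar,\svar]]$ variant is about $c$- and $s$-classes of the \emph{same} map). It does not directly give the bivariate version you invoke — namely that if $C\in\QQ[[\svar,\svar']]$ vanishes at $(\pi_1^*s(f),\pi_2^*s(g))$ for all pairs $f,g\in\CC_l$, then $C=0$. This bivariate statement is true and can be derived from the univariate one (fix $f$, decompose $C=\sum_\mu C_\mu(\svar)(\svar')^\mu$, expand against a $\QQ$-basis of $\coh^*(N_1)$ and apply Proposition~\ref{pro:unique} in the $\svar'$-variables, then again in the $\svar$-variables), but it is an extra lemma that neither the paper states nor you prove. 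The paper's use of $f^{(2)}$ and the diagonal pullback is precisely a device to avoid ever needing a two-variable uniqueness statement; that is the main design difference, and it makes the paper's argument self-contained given the tools it has already developed. Your step (iii), deducing linearity of a constant-term-free power series from $L(\svar+\svar')=L(\svar)+L(\svar')$ by bidegree bookkeeping, is correct and slightly more elementary than the paper's appeal to Lemma~\ref{lem:linearity}, but it is enabled only because you paid the price of the bivariate functional equation.
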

We call the series $\MSlog \in \QQ[[\svar]]$ the Master Series.
\begin{df}
	 For a nonempty T-multi\-sin\-gu\-la\-ri\-ty $\multi$ we define a power series $\KazT{\multi} \in \QQ[[\svar]]$ as the $\svar$-linear part of the quotient $\ThT{\multi} \cdot \MS^{-1}$.
\end{df} 
\begin{rem}
    For a nonempty multisingularity $\multi$ the degree zero part of the Thom polynomial $\ThT{\multi}$ is equal to zero. This has easy direct proof, or may be deduced from the Theorem \ref{thm:indT} below. It follows that $\KazT{\multi}$ is also the $\svar$-linear part of $\ThT{\multi}$.
\end{rem}
\begin{thm} \label{thm:indT}
	Let $\multi$ be a nonempty T-multisingularity. Introduce an order on it, i.e. $\multi=\{\eta_1,\eta_2,\ldots,\eta_k\}$. We have
	$$
	\ThT{\multi}=
	\sum_{1\in I\subset [k]}
	\KazT{\multi_I}\cdot\ThT{\multi_{I'}} \in \QQ[[\svar]]\,,
	$$
	where $I'$ is the complement of $I$ in $[k]$.
    %\\ \JK{I prefer this version, old one:
    %$$
	%\ThT{\multi}=\KazT{\multi}\cdot \MS +
	%\sum_{1\in I\subsetneq [k]}
	%\KazT{\multi_I}\cdot\ThT{\multi_{I'}} \in \QQ[[\svar]]\,,
	%$$}
\end{thm}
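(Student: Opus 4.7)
The plan is to deduce Theorem \ref{thm:indT} from the multiplicativity of Thom polynomials under the disjoint-union operation $\sq$, package this as an exponential formula, and then extract the stated recursion by differentiation. The key geometric inputs are Propositions \ref{pro:sum1} and \ref{pro:sum2}.

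First, I would apply Proposition \ref{pro:sum1} iteratively to the $k$-fold disjoint union $f^{(k)} = f \sq \cdots \sq f \in \CC_l$ and pull back along the diagonal $\Delta \colon N \hookrightarrow N^k$ to get
\[
\Delta^*\,\ssm(\Tlocus{\multi}{f^{(k)}})
= \sum_{\multi_1 + \cdots + \multi_k = \multi}\ \prod_{i=1}^k \ssm(\Tlocus{\multi_i}{f}),
\]
while Proposition \ref{pro:sum2} gives $\Delta^* s_\lambda(f^{(k)}) = k \cdot s_\lambda(f)$. Applying Conjecture \ref{con:SSM}[T] to both sides and removing the dependence on the specific $f$ via Proposition \ref{pro:unique} yields the polynomial identity
\[
\frac{\ThT{\multi}(k\svar)}{|\Aut(\multi)|}
= \sum_{\multi_1 + \cdots + \multi_k = \multi}\ \prod_{i=1}^k \frac{\ThT{\multi_i}(\svar)}{|\Aut(\multi_i)|},
\qquad k \geq 1.
\]

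Introducing formal variables $t_\eta$ for each monosingularity and the generating series $\mathcal{A}(\svar, t) := \sum_\multi \ThT{\multi}(\svar)\, t^\multi/|\Aut(\multi)|$, this reads $\mathcal{A}(k\svar, t) = \mathcal{A}(\svar, t)^k$. Each $(\svar, t)$-coefficient of both sides is polynomial in $k$ --- using $\MS(0) = 1$ from Theorem \ref{thm:MS} to expand $\mathcal{A}^k = (1 + (\mathcal{A} - 1))^k$ via the binomial series --- so the identity, initially for $k \in \{1, 2, \ldots\}$, extends to a formal identity in $k$. Taking logarithms gives $\log \mathcal{A}(k\svar, t) = k \log \mathcal{A}(\svar, t)$, which forces $\log \mathcal{A}$ to be linear in $\svar$; hence $\mathcal{A} = \exp(L)$ with $L(\svar, t)$ linear in $\svar$. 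Setting $t = 0$ recovers $L(\svar, 0) = \MSlog$, and comparing $t^\multi$-coefficients in $\exp(L) = \MS \cdot \exp(L - L|_{t=0})$ identifies the $\svar$-linear part of $\ThT{\multi} \cdot \MS^{-1}$ with $|\Aut(\multi)| \cdot [t^\multi] L$, i.e., with $\KazT{\multi}$ by definition. This recovers the exponential formula \eqref{eq:IntoMain}.

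Finally, to extract the stated recursion I differentiate $\mathcal{A} = \exp(L)$ with respect to $t_{\eta_1}$ and compare $t^{\multi - \eta_1}$-coefficients in $\partial_{t_{\eta_1}} \mathcal{A} = (\partial_{t_{\eta_1}} L)\,\mathcal{A}$. With $\multi = (\eta_1^{a_1}, \eta_2^{a_2}, \ldots)$ and a split $\multi_I = (\eta_1^{b_1}, \eta_2^{b_2}, \ldots)$ containing $\eta_1$, the coefficient of $\KazT{\multi_I}\,\ThT{\multi_{I'}}$ simplifies after multinomial bookkeeping to $\binom{a_1 - 1}{b_1 - 1}\prod_{j \geq 2}\binom{a_j}{b_j}$, which is precisely the number of subsets $I \subset [k]$ containing $1$ whose induced submultisingularity equals $\multi_I$. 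Reassembling the sum over subsets yields $\ThT{\multi} = \sum_{1 \in I \subset [k]} \KazT{\multi_I}\,\ThT{\multi_{I'}}$. The most delicate step I expect is promoting the cohomological identity to the polynomial identity in $\svar$ via Proposition \ref{pro:unique}, since it rests on $f^{(k)}$ lying in $\CC_l$ and on compatibility of the substitution $\svar \to k\svar$ with evaluation at $f$; the rest is formal power-series manipulation and combinatorial bookkeeping.
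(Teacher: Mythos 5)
Your proposal is correct and follows essentially the same route as the paper: disjoint union $f^{(k)}$ with diagonal pullback and Propositions \ref{pro:sum1}, \ref{pro:sum2} give multiplicativity, Proposition \ref{pro:unique} lifts this to an identity of power series, and the resulting exponential structure is unwound combinatorially into the stated recursion. The only stylistic difference is that you establish $\mathcal{A}(k\svar,\tvar)=\mathcal{A}(\svar,\tvar)^k$ for all $k\geq 1$ and then extend to formal $k$ via a binomial-series and polynomial-interpolation argument, whereas the paper observes (Lemma \ref{lem:linearity}) that the single equation $\log\mathcal{A}(2\svar,\tvar)=2\log\mathcal{A}(\svar,\tvar)$ already forces $\svar$-linearity of the logarithm, since $(2^{\deg_s m}-2)a_m=0$ kills every non-linear coefficient---so the paper's argument is leaner but the underlying idea is identical.
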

\begin{ex}
    For a monosingularity $\multi=\{\eta\}$ we have
	$$
	\ThT{\eta}=\exp(\MSlog)\cdot
	S_\eta\,.
	$$
    For $\multi=A^2_0$:
	$$\ThT{A^2_0}=\exp(\MSlog)\cdot\left(\KazT{A^2_0}+ \KazT{A_0}^2 \right)\,.$$
	For $\multi=A^3_0$:
	$$\ThT{A^3_0}=\exp(\MSlog)\cdot\left(\KazT{A^3_0}+ 3\KazT{A^2_0}\KazT{A_0}+\KazT{A_0}^3 \right)\,.$$
\end{ex} 
\begin{cor} \label{cor:indT}
	Theorem \ref{thm:indT} may be written in the  equivalent forms:
    \begin{description}
        \item[Closed formula] Fix a bijection $\multi\simeq [k]$. Consider  all decompositions of the set $[k]$ into nonempty subsets, do not distinguish between decompositions which vary by a permutation of subsets. Then
    $$
	\ThT{\multi}=\exp(\MSlog)\cdot
	\sum_{[k]=I_1\sqcup \dots \sqcup I_r}
	\KazT{\multi_{I_1}}\cdot\KazT{\multi_{I_2}}\cdot\ldots\cdot \KazT{\multi_{I_r}}\,.
	$$
    \item[Generating function] Let $X$ be a finite set of singularities. Denote by $X_T$ the set of T-multi\-sin\-gu\-la\-ri\-ties, consisting of singularities from $X$. To every $\eta\in X$ we associate a formal variable $t_\eta$ and to every $\multi\in X_T$ a monomial $t^\multi$, with $t^\varnothing=1$. Then
    \begin{align*}
	N_X:=\sum_{\multi\in X_T}
	\frac{\ThT{\multi}}{|\Aut(\multi)|}\cdot t^\multi=
	\exp \bigg(
	\sum_{\multi\in X_T}
	\frac{\KazT{\multi}}{|\Aut(\multi)|}\cdot t^\multi
	\bigg) \in \QQ[[\svar,\tvar]]
	\,.
    \end{align*}
    \end{description}
\end{cor}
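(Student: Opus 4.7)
The plan is to derive both reformulations directly from the recursion in Theorem~\ref{thm:indT}, using Theorem~\ref{thm:MS} as the base case.

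First I would establish the \emph{Closed formula} by induction on $k=|\multi|$. The base case is $\multi=\varnothing$: the only set partition is the empty one, which produces an empty product equal to $1$, and the formula reduces to $\ThT{\varnothing}=\exp(\MSlog)$, which is exactly Theorem~\ref{thm:MS}. For the inductive step, I would apply the recursion
\[
\ThT{\multi}=\sum_{1\in I\subset [k]}\KazT{\multi_I}\cdot\ThT{\multi_{I'}},
\]
and substitute the inductive hypothesis for each $\ThT{\multi_{I'}}$ (since $|I'|<k$). The factor $\exp(\MSlog)$ pulls out in front, and the remaining double sum
\[
\sum_{1\in I\subset [k]}\KazT{\multi_I}\sum_{I'=J_1\sqcup\cdots\sqcup J_{r-1}}\KazT{\multi_{J_1}}\cdots\KazT{\multi_{J_{r-1}}}
\]
is re-indexed via the obvious bijection between set partitions $[k]=I_1\sqcup\cdots\sqcup I_r$ and pairs $(I,\text{partition of }I')$ where $I$ is the block containing $1$. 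This yields the closed formula.

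Next I would show that the \emph{Generating function} form is equivalent to the closed formula by a formal combinatorial identity. Expanding the exponential, the coefficient of $t^\multi$ on the right-hand side equals
\[
\sum_{r\ge 0}\frac{1}{r!}\sum_{(\mzeta_1,\ldots,\mzeta_r)\,:\,\mzeta_1+\cdots+\mzeta_r=\multi}\prod_{j=1}^{r}\frac{\KazT{\mzeta_j}}{|\Aut(\mzeta_j)|}.
\]
The key observation is that, for a fixed ordered tuple $(\mzeta_1,\ldots,\mzeta_r)$ with $\sum\mzeta_j=\multi$, the number of \emph{ordered} set partitions $[k]=I_1\sqcup\cdots\sqcup I_r$ inducing $\multi_{I_j}=\mzeta_j$ is the multinomial coefficient $|\Aut(\multi)|/\prod_j|\Aut(\mzeta_j)|$, obtained by choosing, for each monosingularity $\eta_i$ with multiplicity $a_i$ in $\multi$, how its copies are distributed among the parts. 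Multiplying by $|\Aut(\multi)|$ and dividing the sum over ordered partitions by $r!$ to pass to unordered partitions, the generating function identity reduces exactly to the closed formula.

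The main obstacle is not conceptual but rather the careful bookkeeping of the automorphism factors $|\Aut(\cdot)|$ in the multinomial count, together with keeping track of the transition between ordered tuples of multisingularities and ordered set partitions, and then modding out by $S_r$ symmetry on both sides simultaneously. Apart from this combinatorial accounting, everything is a direct unwinding of the recursion of Theorem~\ref{thm:indT} and the standard exponential formula.
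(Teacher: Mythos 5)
Your proof is correct. The paper does not actually spell out a proof of Corollary~\ref{cor:indT}: it treats the passage among the recursion of Theorem~\ref{thm:indT}, the closed formula, and the generating-function form as the standard exponential-formula re\-in\-dex\-ing, and in Section~\ref{sec:induction3} it actually proves the generating-function form (the identity for $N_X$) directly via the $f\sq f$ linearity argument, taking the equivalence of the three phrasings for granted. You supply the missing combinatorial verification, running in the opposite logical direction (from the recursion to the other two forms); the key multinomial count $|\Aut(\multi)|/\prod_j|\Aut(\mzeta_j)|$ for the number of ordered set partitions inducing a fixed ordered tuple $(\mzeta_1,\ldots,\mzeta_r)$ is correct, and so is the $1/r!$ factor from passing between ordered and unordered partitions. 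One small elision: when you write the coefficient of $t^\multi$ in the expanded exponential as a sum over tuples $(\mzeta_1,\ldots,\mzeta_r)$ with $\sum\mzeta_j=\multi$, these tuples may contain $\varnothing$ (since $t^\varnothing=1$), whereas the closed formula's set partitions have nonempty blocks only. You should explicitly split off the $\exp(\KazT{\varnothing})=\exp(\MSlog)$ contribution coming from the $\varnothing$ entries before applying the multinomial count to the nonempty tuples; after that the two sides match exactly as you claim.
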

All generating series from the above corollary can be combined into a single one.
\begin{cor}\label{cor:IndTver}
    To every singularity $\eta$ associate a formal variable $t_\eta$ and to every multisingularity $\multi$ a monomial $t^\multi$. We have
    \begin{align*}
	\sum_{\multi}
	\frac{\ThT{\multi}}{|\Aut(\multi)|}\cdot t^\multi=
	\exp \bigg(
	\sum_{\multi}
	\frac{\KazT{\multi}}{|\Aut(\multi)|}\cdot t^\multi
	\bigg) \in \QQ[[\svar,\tvar]]
	\,.
    \end{align*}
\end{cor}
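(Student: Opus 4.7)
\medskip

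The plan is to deduce Corollary~\ref{cor:IndTver} from Corollary~\ref{cor:indT} by a coefficient-by-coefficient comparison. The only genuinely new content over Corollary~\ref{cor:indT} is the passage from a finite set $X$ of singularities to the set of all singularities, so the first task is to fix the ambient ring $\QQ[[\svar,\tvar]]$: interpret it as formal series $\sum_\multi c_\multi(\svar)\, t^\multi$ indexed by T-multisingularities $\multi$, where each $t^\multi$ is a monomial in finitely many of the variables $\tvar=(t_\eta)$ (as a multiset $\multi$ is finite) and each coefficient $c_\multi$ lies in $\QQ[[\svar]]$. In this ring the LHS of the claim is well defined by definition, and once the RHS is shown to be well defined the proof reduces to comparing the coefficient of an arbitrary fixed monomial $t^{\multi_0}$ on the two sides.

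To set up the comparison, fix $\multi_0$ and let $X$ be the finite set of monosingularities appearing in $\multi_0$. Split the inner sum as
\begin{equation*}
\sum_{\multi}\frac{\KazT{\multi}}{|\Aut(\multi)|}\,t^\multi \;=\; A_X + B_X,
\end{equation*}
where $A_X$ is the partial sum over multisingularities whose support is contained in $X$ (including $\varnothing$), and $B_X$ is the remaining partial sum, whose every monomial $t^\multi$ involves at least one $t_\eta$ with $\eta\notin X$. The series $A_X$ lies in the \emph{finitely} generated polynomial-variable ring $\QQ[[\svar,(t_\eta)_{\eta\in X}]]$, and its exponential is exactly the series $N_X$ of Corollary~\ref{cor:indT}. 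The series $\exp(B_X)=1+B_X+\tfrac12 B_X^2+\cdots$ makes sense in the completed ring because for each target monomial only finitely many terms contribute; moreover every nontrivial monomial of $\exp(B_X)$ still contains some $t_\eta$ with $\eta\notin X$, so its coefficient of $t^{\multi_0}$ is zero.

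Granting that $\exp$ of the full sum equals $\exp(A_X)\cdot\exp(B_X)$ in the completed ring (a formal manipulation which is valid because the inner sum has no $\svar$-free, $\tvar$-free constant term once one fixes a grading making each $t_\eta$ positive, so exponentials converge), the $t^{\multi_0}$-coefficient of the RHS of Corollary~\ref{cor:IndTver} equals the $t^{\multi_0}$-coefficient of $N_X \cdot \exp(B_X) = N_X\cdot (1 + \text{terms supported outside }X)$, and hence coincides with the $t^{\multi_0}$-coefficient of $N_X$. By Corollary~\ref{cor:indT} this equals $\ThT{\multi_0}/|\Aut(\multi_0)|$, which is precisely the $t^{\multi_0}$-coefficient of the LHS. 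Since $\multi_0$ was arbitrary, the two sides agree.

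The only real obstacle here is bookkeeping: one must make sure that the grading/topology on $\QQ[[\svar,\tvar]]$ is defined so that (i) each coefficient $c_{\multi_0}(\svar)$ on both sides is determined by finitely many terms of the inner sum, and (ii) the identity $\exp(A_X+B_X)=\exp(A_X)\exp(B_X)$ is literally valid in this completion. Both hold once one declares each $t_\eta$ to have positive $\tvar$-degree and works modulo a filtration by total $\tvar$-degree; since each nonempty $\multi$ contributes a strictly positive $\tvar$-degree, only finitely many products $\KazT{\multi_1}\cdots\KazT{\multi_r}$ can land in any given $t^{\multi_0}$-coefficient, and the standard exponential identity holds term-by-term in the filtration. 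After this setup the corollary is immediate from Corollary~\ref{cor:indT}.
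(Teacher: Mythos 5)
Your proof is correct and takes essentially the same approach the paper intends; the paper states only that ``All generating series from the above corollary can be combined into a single one,'' and your argument spells out exactly what this combination amounts to. The key observation — that for a fixed monomial $t^{\multi_0}$ one may take $X$ to be the (finite) support of $\multi_0$, split $\exp(A_X+B_X)=\exp(A_X)\exp(B_X)=N_X\cdot\exp(B_X)$, and note that $\exp(B_X)-1$ contributes nothing to the $t^{\multi_0}$-coefficient — correctly reduces the statement to Corollary~\ref{cor:indT}.
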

%\JK{I was worried for some time that there is some finiteness problem here. But now I think that the statement makes sense. Please check if you agree with it.}
\begin{rem}
    A similar power series for the classical Thom polynomials is present in the works of Kazarian \cite[Formula 9]{KazaSapporo}.
    The existence of the SSM-version was conjectured by Ohmoto \cite[Sect. 6.4]{ohmotoSMTP}.
    The new phenomenon here is the nonvanishing of the constant term $\MSlog$. The resulting Thom polynomials depend polynomially on other $\KazT{\multi}$ series, but exponentially on $\MSlog$.
\end{rem}

\subsection{Source induction} \label{sec:source}
In this section we assume Conjectures \ref{con:SSM}[S] and \ref{con:SSM}[T].
\begin{df}
	For an S-multisingularity $\multi$ we define a power series $\KazS{\multi} \in \QQ[[\cvar]]$
	as a $\svar$-degree zero part of the Thom polynomial $\ThS{\multi}$, i.e.
	$$\KazS{\multi}=\ThS{\multi}(\cvar,0)\,.$$
\end{df} 
\begin{thm} \label{thm:indS}
	Let $(\eta_1,\multi)$ be an S-multisingularity. Introduce an order on it such that the distinguished element corresponds to one, i.e. $\multi=\{\eta_1,\eta_2,\ldots,\eta_k\}$. We have
	$$
	\ThS{\multi}=
	\sum_{1\in I\subset [k]}
	\KazS{\multi_I}\cdot\ThT{\multi_{I'}}
	\in \QQ[[\cvar,\svar]]\,,
	$$
	where $I'$ is the complement of $I$ in $[k]$.
    %\\ \JK{I prefer this version, old one:
    %$$
	%\ThS{\multi}=\KazS{\multi}\cdot\MS +
	%\sum_{1\in I\subsetneq [k]}
	%\KazS{\multi_I}\cdot\ThT{\multi_{I'}}
	%\in \QQ[[\cvar,\svar]]\,,
	%$$}
\end{thm}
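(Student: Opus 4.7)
The plan is to follow the template used for Theorem~\ref{thm:indT}: reformulate the statement as a factorization of generating functions, and then verify the factorization by evaluating on a disjoint union $f\sq g$ and invoking Corollary~\ref{cor:sum}. Fix a singularity $\eta$ and set
\[
\mathcal L_\eta := \sum_{\multi\ni \eta}\frac{\ThS{(\eta,\multi)}}{|\Aut(\eta,\multi)|}\,t^{\multi},\qquad \mathcal N := \sum_{\multi}\frac{\ThT{\multi}}{|\Aut(\multi)|}\,t^{\multi}.
\]
A bookkeeping lemma --- for each pair $(\multi_A,\multi_B)$ with $\multi_A\ni\eta$ and $\multi_A+\multi_B=\multi$, the number of subsets $I\subset[k]$ with $1\in I$, $\multi_I=\multi_A$, $\multi_{I'}=\multi_B$ equals $|\Aut(\eta,\multi)|/\bigl(|\Aut(\eta,\multi_A)|\,|\Aut(\multi_B)|\bigr)$ --- shows that Theorem~\ref{thm:indS} is equivalent to the factorization
\[
\mathcal L_\eta(\cvar,\svar) \;=\; \mathcal L_\eta(\cvar,0)\cdot \mathcal N(\svar),
\]
since by definition $\mathcal L_\eta(\cvar,0)$ is the $\KazS$-generating series.

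The core step will be to prove the stronger multiplicativity
\[
\mathcal L_\eta(\cvar,\svar_1+\svar_2) \;=\; \mathcal L_\eta(\cvar,\svar_1)\cdot \mathcal N(\svar_2) \qquad\text{in } \QQ[[\cvar,\svar_1,\svar_2,\tvar]],
\]
from which the previous identity follows by setting $\svar_1=0$. The strategy is to evaluate both sides on a pair $f,g\in \CC_l$ with a common target $N$ and pull back along the graph $\Gamma\colon M_1\to M_1\times N$ of $f$. Combining Proposition~\ref{pro:sum2} with the observation $(f\sq g)\circ \Gamma=\Delta\circ f$ gives
\[
\Gamma^{*}c_t(f\sq g)=c_t(f),\qquad \Gamma^{*}(f\sq g)^{*}s_\lambda(f\sq g)=f^{*}s_\lambda(f)+f^{*}s_\lambda(g),
\]
so substituting $\svar_1\mapsto f^{*}s(f)$ and $\svar_2\mapsto f^{*}s(g)$ turns the LHS into $\Gamma^{*}\!\sum_\multi \ThS{(\eta,\multi)}(f\sq g)\,t^{\multi}/|\Aut(\eta,\multi)|$. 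By Conjecture~\ref{con:SSM} together with Corollary~\ref{cor:sum}, this last expression factors as $L_\eta(f)\cdot f^{*}N(g)$, which is precisely the same substitution applied to the RHS.

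The main obstacle will be to lift the evaluated identity back to an identity in the abstract ring $\QQ[[\cvar,\svar_1,\svar_2,\tvar]]$. This requires a two-map analog of Proposition~\ref{pro:unique}: for any nonzero $P\in \QQ[[\cvar,\svar_1,\svar_2]]$, one must produce $f,g\in \CC_l$ with $P(c(f),f^{*}s(f),f^{*}s(g))\neq 0$. I expect this to follow by applying Proposition~\ref{pro:unique} to families of maps of the form $f\sq g$ and exploiting the independent degrees of freedom of $f$ and $g$, mildly extending the arguments of Appendix~\ref{ap:unique}. Once this independence is in place, the combinatorial reformulation of the first paragraph immediately converts the generating-function factorization into the formula of Theorem~\ref{thm:indS}.
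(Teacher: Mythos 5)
Your plan shadows the paper's proof closely in its framing --- reformulating the theorem as a generating function factorization $\mathcal{L}_\eta(\cvar,\svar)=\mathcal{L}_\eta(\cvar,0)\cdot\mathcal{N}(\svar)$ (the paper's equation~\eqref{w:M}, modulo dividing by $\exp(\MSlog)$, which is harmless since that factor involves only $\svar$) --- and the evaluation step via Corollary~\ref{cor:sum} and Proposition~\ref{pro:sum2} is also correct and matches the paper's Lemma~\ref{lem:Mmult}. The combinatorial bookkeeping lemma about counting subsets $I$ is also right. Where you diverge is the \emph{mechanism for lifting the evaluated identity to $\QQ[[\cvar,\svar,\tvar]]$}: you want to prove the two-variable identity $\mathcal{L}_\eta(\cvar,\svar_1+\svar_2)=\mathcal{L}_\eta(\cvar,\svar_1)\cdot\mathcal{N}(\svar_2)$ and set $\svar_1=0$. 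The paper instead takes $g=f$, which only yields the scalar rescaling $\svar\mapsto 2\svar$ and hence the single-variable identity $\M(\cvar,2\svar)=\M(\cvar,\svar)\N(\svar)$; that is lifted by the existing Proposition~\ref{pro:unique}, and then a bootstrapping argument (Lemma~\ref{lem:Sind2}: iterate to powers of $2$, observe polynomiality in $k$, extrapolate) reaches all integers $k$ and specializes $k=0$. So the paper pays for the weaker lifting lemma with the bootstrapping; you want to skip the bootstrapping by using a stronger lifting lemma.

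The issue is that your proposed two-map uniqueness statement is not a ``mild extension'' of Appendix~\ref{ap:unique} in the form you state it. You want: for $0\neq P\in\QQ[[\cvar,\svar_1,\svar_2]]$ there exist $f,g\in\CC_l$ with \emph{common target} such that $P(c(f),f^*s(f),f^*s(g))\neq 0$. But the appendix's test maps $F_{\avar}$ carry torus actions on the target $\C^{M+l}$ whose weights depend on $\avar$; two test maps $F_{\avar},F_{\avar'}$ with $\avar\neq\avar'$ do not share a common equivariant target, so you cannot vary $g$ freely while holding $f$ fixed in this setting. The appendix's own argument for Proposition~\ref{pro:uniqueSC} sidesteps this by using the $\sq$ construction (product targets) and restricting to the $F_{\avar}$-component: there the two contributions $F_{\avar}^*s(F_{\avar})$ and $\tilde{\pi}_2^*s(g)$ automatically land in the $\TT_1$- and $\TT_2$-variable parts of $\coh^*_{\TT_1\times\TT_2}(\pt)$ and separate. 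You could repair your argument by making that same move: drop the graph pullback $\Gamma^*$, work directly on the $M_1\times N_2$ component of $f\sq g$ with the substitution $\svar_1\mapsto\tilde{\pi}_1^*f^*s(f)$, $\svar_2\mapsto\tilde{\pi}_2^*s(g)$, and then the required two-$\svar$-variable uniqueness is a genuine extension of the appendix's argument (now with coefficients in $\coh^*_{\TT_1}(\pt)$ when you apply the analogue of Proposition~\ref{pro:uniqueS1'}). That patch makes your route sound, but it is more work than you advertise, whereas the paper's $g=f$ trick plus bootstrapping avoids needing any new uniqueness result at all.
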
	
\begin{cor} \label{cor:IndS}
	Theorem \ref{thm:indS} may be rewritten in equivalent forms
    \begin{description}
        \item[Closed formula] Fix a bijection $\multi\simeq [k]$, such that the chosen singularity $\eta$ corresponds to $1$. Consider  all decompositions of the set $[k]$ into nonempty subsets, do not distinguish between decompositions which vary by a permutation of subsets. Then 
        $$
	    \ThS{\multi}=\exp(\MSlog)\cdot
	    \sum_{[k]=I_1\sqcup \dots \sqcup I_r\,, 1\in I_1}
	    \Big(\KazS{\multi_{I_1}}\cdot \prod_{j=2}^{r}\KazT{\multi_{I_j}}\Big)\,,
	    $$
        \item[Generating function] Let $X$ be a finite set of singularities with a chosen element $\eta_1$. Denote by $X_S$ the set of S-multisingularities, consisting of singularities from $X$, with chosen singularity $\eta_1$. To every $\eta\in X$ we associate a formal variable $t_\eta$ and to every $\multi\in X_S$ a monomial $t^\multi$. Then
    \begin{align*}
	M_X:=\sum_{\multi\in X_S}
	\frac{\ThS{\multi}}{|\Aut(\multi)|}\cdot t^\multi=
    \bigg(\sum_{\multi\in X_S}
	\frac{\KazS{\multi}}{|\Aut(\multi)|}\cdot t^\multi\bigg)\cdot N_X
	\,,
    \end{align*}
    where $N_X$ is the power series from Corollary \ref{cor:indT}.
    \end{description}
\end{cor}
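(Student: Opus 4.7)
The plan is to derive both forms as purely formal consequences of Theorem~\ref{thm:indS}, using the closed-formula version of Corollary~\ref{cor:indT} as the key input. The entire argument is combinatorial bookkeeping; no new geometric information enters beyond what is already encoded in Theorems~\ref{thm:indT} and~\ref{thm:indS}.

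First I would establish the closed formula. Starting from
\[
\ThS{\multi}=\sum_{1\in I\subset[k]} \KazS{\multi_I}\cdot\ThT{\multi_{I'}},
\]
I would substitute the closed form of Corollary~\ref{cor:indT} applied to the (possibly empty) T-multisingularity $\multi_{I'}$:
\[
\ThT{\multi_{I'}}=\exp(\MSlog)\sum_{I'=J_1\sqcup\cdots\sqcup J_{r-1}}\prod_{j=1}^{r-1}\KazT{\multi_{J_j}},
\]
with the convention that the empty decomposition contributes the factor $1$ when $I'=\varnothing$. Combining these, each pair consisting of a subset $I\ni 1$ of $[k]$ together with an unordered decomposition of the complement $I'$ into non-empty blocks is in bijection with an unordered decomposition $[k]=I_1\sqcup\cdots\sqcup I_r$ in which the block $I_1$ containing $1$ is marked. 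Under this bijection $\KazS{\multi_I}$ becomes $\KazS{\multi_{I_1}}$ and the product $\prod_{j}\KazT{\multi_{J_j}}$ becomes $\prod_{j=2}^{r}\KazT{\multi_{I_j}}$, giving the closed formula.

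Next I would translate to the generating-function form. Expanding the exponential on the right-hand side of the desired identity $M_X=\bigl(\sum_{\multi\in X_S}\tfrac{\KazS{\multi}}{|\Aut(\multi)|}t^\multi\bigr)\cdot N_X$, a typical term is obtained by choosing one S-multisingularity $\multi'$ from the first factor and an unordered multiset of non-empty T-multisingularities $\{\nu_2,\ldots,\nu_r\}$ from the expansion of $N_X$, multiplied by a factor $\exp(\MSlog)$ coming from the empty T-multisingularity. The $t$-monomials multiply to $t^{\multi'+\nu_2+\cdots+\nu_r}$, so to identify the result with $M_X$ it suffices to check, for each fixed S-multisingularity $\multi$, that the resulting sum matches the closed formula expression for $\ThS{\multi}/|\Aut(\multi)|$ derived in the previous step. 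This is the standard exponential-formula translation between "ordered decompositions up to automorphism" and products of generating series.

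The main obstacle, as always in these combinatorial identities, is the bookkeeping of automorphism factors. One must verify that the multinomial count
\[
\frac{|\Aut(\multi)|}{|\Aut(\multi')|\cdot|\Aut(\nu_2)|\cdots|\Aut(\nu_r)|\cdot \#(\text{repetitions among the }\nu_j)}
\]
exactly equals the number of marked unordered decompositions $[k]=I_1\sqcup\cdots\sqcup I_r$ (with $1\in I_1$) realizing the prescribed multisingularity structure on each block. I expect this to go through cleanly because it is the same combinatorial identity that underlies the passage from Theorem~\ref{thm:indT} to Corollary~\ref{cor:indT}; here the distinguished block containing $1$ simply lifts out and feeds the first factor, while the remaining blocks produce the exponential via the standard argument. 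Once this multiplicity count is verified, both statements in Corollary~\ref{cor:IndS} follow, and their equivalence is a formal manipulation of power series in $\QQ[[\cvar,\svar,\tvar]]$.
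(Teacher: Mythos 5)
Your approach is sound and fills in a verification the paper leaves implicit: the paper does not write out a proof of Corollary~\ref{cor:IndS} at all. In fact, the paper's Section~\ref{sec:induction4} runs the logic in the opposite direction — it proves the generating-function identity (formula~\eqref{w:M}, which is exactly the second bullet of the corollary) directly via Lemmas~\ref{lem:Mmult}--\ref{lem:Sind2}, and then treats Theorem~\ref{thm:indS} as following from it by the same implicit combinatorial rewriting you are making explicit. So you are proving the converse direction the paper takes for granted, which is a legitimate reading of the word ``equivalent.''

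Two remarks on the execution. First, your derivation of the closed formula by substituting Corollary~\ref{cor:indT} into Theorem~\ref{thm:indS} and pairing a subset $I\ni 1$ with an unordered decomposition of $I'$ is exactly right. Second, for the generating-function form you take a longer route than necessary: you expand the exponential form of $N_X$ and are then forced to deal with the ``repetitions among the $\nu_j$'' correction factor, which you state but do not verify. The shortcut is to use the \emph{other} expression for $N_X$ from Corollary~\ref{cor:indT}, namely $N_X=\sum_{\mzeta\in X_T}\frac{\ThT{\mzeta}}{|\Aut(\mzeta)|}t^{\mzeta}$. Then for a fixed S-multisingularity $\nu$ the coefficient of $t^{\nu}$ on the right-hand side is
\[
\sum_{\multi+\mzeta=\nu,\ \eta_1\in\multi}\frac{\KazS{\multi}\,\ThT{\mzeta}}{|\Aut(\multi)|\,|\Aut(\mzeta)|}\,,
\]
and matching this to $\ThS{\nu}/|\Aut(\nu)|$ via Theorem~\ref{thm:indS} requires only counting the number of subsets $I\ni 1$ of $[k]$ with $\nu_I=\multi$ and $\nu_{I'}=\mzeta$; a single product of binomial coefficients shows this number equals $|\Aut(\nu)|\big/\big(|\Aut(\multi)|\,|\Aut(\mzeta)|\big)$ (note the S-convention $|\Aut(\multi)|=(b_1-1)!b_2!\cdots$ absorbs the distinguished position). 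This bypasses the exponential-formula bookkeeping entirely, closing the one gap you flagged yourself.
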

\begin{ex}
    For a monosingularity $(\eta,\{\eta\})$ we have
	$$\ThS{\eta}(\cvar,\svar)=\exp(\MSlog)(\svar)\cdot \KazS{\eta}(\cvar).$$
	For the singularity $A_0$ the series $\KazS{A_0}(\cvar)$ starts with one, therefore we have
	$$\ThS{A_0}(0,\svar)=\exp(\MSlog)=\ThT{\varnothing}\,.$$
\end{ex}

\subsection{Proof of the target induction} \label{sec:induction3}
\begin{lemma} \label{lem:linearity}
	Let $\underline{x}$ be a set of variables and $A\in \QQ[[\underline{x},\svar]]$ a power series. Suppose that
	\begin{align} \label{w:1}
		A(\underline{x},2\cdot\svar)=2\cdot A(\underline{x},\svar) \in \QQ[[\underline{x},\svar]]\,,
	\end{align}
	where $2\cdot \svar$ denotes rescaling of all $\svar$-variables by 2. Then $A$ is $\svar$-linear.
\end{lemma}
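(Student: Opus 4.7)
The plan is to decompose $A$ according to its $\svar$-degree and exploit the fact that rescaling $\svar$ by $2$ acts on each piece as multiplication by a distinct scalar.

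Write
\[
A(\underline{x},\svar)=\sum_{d\ge 0}A_d(\underline{x},\svar),
\]
where $A_d$ collects all monomials of $A$ that are homogeneous of degree $d$ in the $\svar$-variables (the coefficients being elements of $\QQ[[\underline{x}]]$). This decomposition is well-defined because each monomial in $\underline{x}$ and $\svar$ has a uniquely determined $\svar$-degree, and the collection is a power series since for any fixed multidegree only finitely many terms contribute. By construction,
\[
A_d(\underline{x},2\cdot \svar) \;=\; 2^d\, A_d(\underline{x},\svar).
\]

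Substituting the decomposition into the hypothesis \eqref{w:1} yields
\[
\sum_{d\ge 0} 2^d A_d(\underline{x},\svar) \;=\; \sum_{d\ge 0} 2\, A_d(\underline{x},\svar).
\]
Since the pieces $A_d$ lie in different $\svar$-graded components of $\QQ[[\underline{x},\svar]]$, and this graded decomposition is a direct sum, I can equate components degree-by-degree to conclude
\[
(2^d-2)\,A_d(\underline{x},\svar)=0\qquad\text{for every }d\ge 0.
\]
The factor $2^d-2$ vanishes only for $d=1$, hence $A_d=0$ for every $d\ne 1$, and $A=A_1$ is $\svar$-linear as claimed.

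There is no real obstacle here; the only thing to be careful about is the legitimacy of the graded decomposition and termwise comparison in the power series ring $\QQ[[\underline{x},\svar]]$, which is immediate from the universal property of the monomial basis. If desired, one could state the lemma (and proof) for any scalar $\lambda\in\QQ$ with $\lambda\neq 0,1$ and $\lambda^d\ne \lambda$ for $d\ne 1$, i.e. any $\lambda$ that is neither a root of unity nor zero; the choice $\lambda=2$ is merely convenient.
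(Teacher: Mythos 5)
Your proof is correct and takes essentially the same approach as the paper's: you group terms by $\svar$-degree and compare the scalar $2^d$ against $2$, while the paper runs the identical comparison monomial-by-monomial, obtaining $(2^{\deg_s m}-2)a_m=0$ for each coefficient $a_m$. The two formulations are interchangeable.
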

\begin{proof}
	Let $m$ be a monomial in variables $\underline{x}$ and $\svar$ and $a_m$ the corresponding coefficient of $A$. Equation \eqref{w:1} implies that
	$(2^{\deg_s m}-2)a_m=0$. Therefore, the coefficient $a_m$ may be nonzero only when $\deg_s m=1$.
\end{proof}
\begin{lemma} \label{lem:master1}
	The constant term of the series $\ThT{\varnothing}$ is equal to 1.
\end{lemma}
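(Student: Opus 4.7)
The plan is to identify the constant term of $\ThT{\varnothing}$ as a rational number, compute it by evaluating the conjectured Thom polynomial identity on any convenient map in $\CC_l$, and use additivity of $\ssm$ to read off the answer.

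First I would observe that every $\svar$-variable has strictly positive cohomological degree: by definition $\deg(s_\lambda) = l + |\lambda| \geq l \geq 1$. Consequently the constant term of $\ThT{\varnothing} \in \QQ[[\svar]]$ is precisely the degree-$0$ component of $\ThT{\varnothing}$; after substitution into any map $f : M \to N$ in $\CC_l$ with connected target, this constant term is the degree-$0$ part of $\ThT{\varnothing}(f) \in \coh^\bullet(N)$, viewed as an element of $\coh^0(N) \cong \QQ$. By Conjecture~\ref{con:SSM}[T] (which is in force in Section~\ref{sec:induction3}), this equals $\ssm(\Tlocus{\varnothing}{f})_{|0}$ for any such $f$, so it suffices to compute this degree-$0$ component for one specific map.

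Next, for any $f \in \CC_l$, the locus $\Tlocus{\varnothing}{f} \subset N$ is by definition the set of $y \in N$ with empty preimage, that is $N \setminus f(M)$. Since $f$ is finite, $f(M)$ is closed in $N$, so this is open; and since $l \geq 1$ and $f$ is finite, $\dim f(M) = \dim M < \dim N$, so the closed subvariety $f(M)$ has codimension at least $1$ in $N$. Additivity of $\ssm$ (Proposition~\ref{pro:csm}(1)) applied to the disjoint decomposition $N = (N \setminus f(M)) \sqcup f(M)$ gives
\[
\ssm(\Tlocus{\varnothing}{f}) + \ssm(f(M)) \;=\; \ssm(N \subset N) \;=\; 1.
\]
Restricting to degree $0$, I would then argue that $\ssm(f(M))_{|0} = 0$: decomposing $f(M)$ into finitely many locally closed smooth strata and applying additivity together with Proposition~\ref{pro:csm}(4), each stratum has positive codimension and therefore contributes zero in degree $0$. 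Hence $\ssm(\Tlocus{\varnothing}{f})_{|0} = 1$.

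Combining the two steps, the constant term of $\ThT{\varnothing}$ equals $1$, as desired. The only mildly subtle point is the codimension-vanishing of $\ssm(f(M))_{|0}$, since $f(M)$ need not be smooth or irreducible; but this is immediate from the stratified additivity of $\ssm$ and the fact that every stratum of $f(M)$ has codimension $\geq 1$ in $N$ because $l \geq 1$. One can also simply take $f$ to be the empty map $\varnothing \to N$, for which $\Tlocus{\varnothing}{f} = N$ and all classes $s_\lambda(f)$ vanish, giving the same conclusion with no stratification argument needed at all.
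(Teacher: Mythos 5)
Your proof is correct and follows essentially the same route as the paper: identify $\Tlocus{\varnothing}{f}=N\setminus f(M)$, apply additivity of $\ssm$ to get $1-\ssm(f(M))$, and conclude from $l\geq 1$ that $f(M)$ has positive codimension so the degree-$0$ part is $1$. The only differences are cosmetic — you spell out the stratification argument for why $\ssm(f(M))_{|0}=0$ and note the alternative of using the empty map, but the core argument is identical.
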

\begin{proof}
	Let $f:M\to N$ be any map in $\CC_l$.  We have
	$$\ThT{\varnothing}(f)=\ssm(\Tlocus{\varnothing}{f})=\ssm(N\setminus f(M))=\ssm(N)-\ssm(f(M))=1-\ssm(f(M)) \in \coh^\bullet(N)\,.$$
	We have $l\ge 1$, so the image $f(M)$ is of codimension at least one.
	It follows that the degree zero part of $\ThT{\varnothing}(f)$ is 1.
	Substitution $s_\lambda \to s_\lambda(f)$ is an identity on the degree zero part of $\QQ[[\svar]]$.
\end{proof}
The above lemma implies that
$ \MSlog:= \log(\ThT{\varnothing})$
is a well defined power series. To prove Theorem \ref{thm:MS} we need to check that it is linear.
\begin{lemma} \label{lem:MSmult} 
	Let $f:M_1\to N$ and $g:M_2\to N$ be maps in $\CC_l$ and $\Delta:N\to N\times N$ the diagonal map.
	Then
	$$\Delta^*\ThT{\varnothing}(f\sq g)=\ThT{\varnothing}(f)\cdot \ThT{\varnothing}(g) \in \coh^\bullet(N)\,.$$
\end{lemma}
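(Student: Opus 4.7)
The proof should follow directly from Corollary \ref{cor:sum} specialized to $\multi = \varnothing$, combined with the defining property of $\ThT{\varnothing}$ from Conjecture \ref{con:SSM}[T] (which is in force throughout Section \ref{sec:induction1}). The key observation is that among all decompositions $\varnothing = \multi_1 + \multi_2$ of the empty multisingularity, there is only one, namely $\multi_1 = \multi_2 = \varnothing$, so the sum appearing in Corollary \ref{cor:sum} collapses to a single product.

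In detail, I would first note that $f \sq g \in \CC_l$ (by the proposition preceding Proposition \ref{pro:sum1}), so Conjecture \ref{con:SSM}[T] applies to it. Since $|\Aut(\varnothing)| = 1$, this gives
$$\ThT{\varnothing}(f \sq g) = \ssm\bigl(\Tlocus{\varnothing}{f \sq g}\bigr) \in \coh^\bullet(N \times N).$$
Then I would apply Corollary \ref{cor:sum} with $\multi = \varnothing$; the only term surviving is the one indexed by the trivial decomposition, so
$$\Delta^* \ssm\bigl(\Tlocus{\varnothing}{f \sq g}\bigr) = \ssm\bigl(\Tlocus{\varnothing}{f}\bigr) \cdot \ssm\bigl(\Tlocus{\varnothing}{g}\bigr).$$
Finally, I would invoke Conjecture \ref{con:SSM}[T] once more, this time for the individual maps $f$ and $g$, to rewrite each factor on the right as $\ThT{\varnothing}(f)$ and $\ThT{\varnothing}(g)$. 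Concatenating these three identities yields the desired equality.

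There is essentially no obstacle to overcome here: all the geometric content is already packaged in Corollary \ref{cor:sum}, and the lemma is simply its trivial-multisingularity specialization combined with the definition of the Thom polynomial. The minor point worth verifying is that the substitution $s_\lambda \mapsto s_\lambda(f \sq g)$ in a power series $P \in \QQ[[\svar]]$ commutes with $\Delta^*$ — but this is immediate, because $\Delta^*$ is a ring homomorphism on cohomology and the coefficients of $P$ are scalars, independent of the map.

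The main role of this lemma, I anticipate, is as the engine for proving $\svar$-linearity of $\MSlog = \log \ThT{\varnothing}$ in Theorem \ref{thm:MS}: multiplicativity of $\ThT{\varnothing}$ under $\sq$ translates to additivity of $\MSlog$, and then by applying the lemma to iterated disjoint unions $f^{(k)}$, combined with Proposition \ref{pro:sum2} (which shows $s_\lambda(f^{(k)})$ scales appropriately after pullback to a single factor) and Lemma \ref{lem:linearity}, one should force $\MSlog$ to be linear in the $\svar$-variables.
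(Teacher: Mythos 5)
Your proof is correct and follows exactly the same route as the paper's one-line proof (``This follows directly from Corollary~\ref{cor:sum} for $\multi=\varnothing$''); you have simply spelled out the three implicit steps: identify $\ThT{\varnothing}(f\sq g)$ with the $\ssm$ class via Conjecture~\ref{con:SSM}[T], apply Corollary~\ref{cor:sum} at $\multi=\varnothing$ where the sum has a single term, and translate back via Conjecture~\ref{con:SSM}[T] applied to $f$ and $g$. Note that the ``minor point'' you flag about $\Delta^*$ commuting with the substitution is not actually invoked anywhere in your argument --- the left side is identified as an $\ssm$ class \emph{before} $\Delta^*$ is applied, so no such commutation is needed.
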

\begin{proof}
	This follows directly from Corollary \ref{cor:sum} for $\multi=\varnothing$.
\end{proof}
\begin{lemma} \label{lem:MSlin}
	The power series $\MSlog$ is linear.
\end{lemma}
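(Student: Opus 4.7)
The plan is to reduce the linearity of $\MSlog$ to the rescaling identity $\MSlog(2\svar) = 2\MSlog(\svar)$ via Lemma~\ref{lem:linearity}, and to obtain this rescaling identity by combining Lemma~\ref{lem:MSmult} (applied with $g=f$) with the additivity of the Landweber--Novikov classes under disjoint union from Proposition~\ref{pro:sum2}.

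First I would take $g=f$ in Lemma~\ref{lem:MSmult} and pass to logarithms. Since $\ThT{\varnothing}$ has constant term $1$ by Lemma~\ref{lem:master1}, the logarithm is well-defined both as a formal power series and as a cohomology class, and the multiplicativity statement yields
\[
\Delta^*\MSlog(f\sq f) \;=\; 2\,\MSlog(f) \quad \in \coh^\bullet(N)
\]
for every $f\in \CC_l$.

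Next I would rewrite the left-hand side. By Proposition~\ref{pro:sum2} one has $s_\lambda(f\sq f) = \pi_1^* s_\lambda(f) + \pi_2^* s_\lambda(f)$, hence $\Delta^* s_\lambda(f\sq f) = 2\,s_\lambda(f)$. Because the substitution of Definition~\ref{df:substitution} is natural with respect to pullbacks, it follows that $\Delta^*\MSlog(f\sq f)$ coincides with the evaluation on $f$ of the power series obtained from $\MSlog$ by rescaling each $s_\lambda$ by $2$; denote this rescaled series by $\MSlog(2\svar)$. Combining the two computations gives
\[
\MSlog(2\svar)(f) \;=\; 2\,\MSlog(\svar)(f) \qquad \text{for every } f\in \CC_l .
\]
By Proposition~\ref{pro:unique} (only the $\svar$-version is needed, since no $\cvar$-variables are involved), the difference $\MSlog(2\svar) - 2\MSlog(\svar)\in\QQ[[\svar]]$ therefore vanishes as a power series. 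Applying Lemma~\ref{lem:linearity} with an empty auxiliary variable set then forces $\MSlog$ to be $\svar$-linear.

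The main point requiring care is the interplay between the formal substitution $s_\lambda \mapsto s_\lambda(f)$ and the geometric pullback $\Delta^*$: one must verify that rescaling $\svar$ inside the abstract series $\MSlog$ faithfully corresponds to the factor of $2$ produced by $\Delta^*(\pi_1^* + \pi_2^*)$ on each $s_\lambda(f\sq f)$. Once that naturality is in place, the chain from Lemma~\ref{lem:MSmult}, through Proposition~\ref{pro:sum2} and Proposition~\ref{pro:unique}, to Lemma~\ref{lem:linearity} is essentially formal.
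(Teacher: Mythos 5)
Your proposal is correct and follows essentially the same route as the paper: reduce to the rescaling identity via Lemma~\ref{lem:linearity}, obtain it by applying Lemma~\ref{lem:MSmult} with $g=f$ together with the additivity of $s_\lambda$ from Proposition~\ref{pro:sum2}, and then invoke Proposition~\ref{pro:unique} to promote the pointwise identity to an identity of power series. The only cosmetic difference is that you take logarithms before, rather than after, the diagonal pullback computation.
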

\begin{proof}
	Due to lemma \ref{lem:linearity} it is enough to check that $\MSlog$ commutes with multiplication by 2, i.e.
	\begin{align} \label{w:2}
		\MSlog(2\cdot\svar)=2\cdot\MSlog(\svar) \in \QQ[[\svar]]\,.
	\end{align}
	Let $f:M\to N$ be a map in $\CC_l$ and $\Delta:N\to N\times N$ the diagonal embedding. We have
	\begin{align*}
		\ThT{\varnothing}(2\cdot\svar)(f)=\Delta^*\ThT{\varnothing}(f^{(2)})=\ThT{\varnothing}(f)^2\in \coh^\bullet(N)\,.
	\end{align*}
	The first equation follows from Proposition \ref{pro:sum2} and the second from Lemma \ref{lem:MSmult}. Applying logarithm to the above formula we obtain that equation \eqref{w:2} holds after substitution to any $f\in \CC_l$. Proposition \ref{pro:unique} implies that it holds also in $\QQ[[\svar]]$. 
\end{proof}
The rest of this section is devoted to the proof of Theorem \ref{thm:indT}. We will prove its generating function version from Corollary \ref{cor:indT}. We use notation from there. \\
Consider the power series $\N=N_X\cdot \exp(\MSlog)^{-1}$. For a map $f\in \CC_l$ we have
\begin{align}\label{w:N2}
	\N(f)=\sum_{\multi\in X_T}
	\frac{\ssm(\Tlocus{\multi}{f})}{\ssm(\Tlocus{\varnothing}{f})}\cdot t^\multi
	\in \coh^\bullet(N)[[\tvar]]
	\,,
\end{align}
The above series starts with 1, therefore it has well-defined logarithm
$$ \log(\N)=\sum_{\multi\in X_T\,, \multi\neq \varnothing}
\frac{A_\multi}{|\Aut(\multi)|} \cdot t^\multi \in \QQ[[\tvar,\svar]]\,.
$$
To prove Theorem \ref{thm:indS} we need to show that $A_\multi=\KazT{\multi}$ for any $\multi\in X_T$. It is enough to prove that the series $A_\multi$ are $\svar$-linear.
\begin{lemma} \label{lem:Nmult}
	Let $f:M_1\to N$ and $g:M_2\to N$ be maps in $\CC_l$ and $\Delta:N\to N\times N$ the diagonal map. Then
	$$\Delta^*\N(f\sq g)=\N(f)\cdot \N(g)
	\in \coh^\bullet(N)[[\tvar]]\,.$$
\end{lemma}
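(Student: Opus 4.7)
The plan is to exploit the factorization $\N = N_X \cdot \exp(\MSlog)^{-1}$ and reduce multiplicativity of $\N$ to separate multiplicativity statements for its two factors. Since $f\sq g \in \CC_l$ (by the proposition preceding Proposition~\ref{pro:sum1}), substituting $f\sq g$ into both sides of this factorization makes sense as classes on $N\times N$; using Proposition~\ref{pro:sum2} the substitution $s_\lambda \mapsto s_\lambda(f\sq g) = \pi_1^* s_\lambda(f) + \pi_2^* s_\lambda(g)$ is well-defined. After applying $\Delta^*$, the claim reduces to proving the two identities
\[
\Delta^* N_X(f\sq g) = N_X(f)\cdot N_X(g),
\qquad
\Delta^* \exp(\MSlog)(f\sq g) = \exp(\MSlog)(f)\cdot \exp(\MSlog)(g),
\]
and then dividing.

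The second identity is exactly Lemma~\ref{lem:MSmult}, since $\exp(\MSlog) = \ThT{\varnothing}$ by Theorem~\ref{thm:MS}. For the first, I would use that $f\sq g \in \CC_l$ together with Conjecture~\ref{con:SSM}[T] to write $\ThT{\multi}(f\sq g) = \ssm(\Tlocus{\multi}{f\sq g})\cdot |\Aut(\multi)|$, so the factorials in the definition of $N_X$ cancel and
\[
N_X(f\sq g) = \sum_{\multi \in X_T} \ssm\!\left(\Tlocus{\multi}{f\sq g}\right)\, t^\multi.
\]
Applying $\Delta^*$ term by term and invoking the T-multisingularity half of Corollary~\ref{cor:sum} yields
\[
\Delta^* N_X(f\sq g)
= \sum_{\multi \in X_T} t^\multi \!\!\sum_{\multi_1 + \multi_2 = \multi} \ssm\!\left(\Tlocus{\multi_1}{f}\right)\cdot \ssm\!\left(\Tlocus{\multi_2}{g}\right).
\]
Reindexing the double sum by the pair $(\multi_1, \multi_2)$, and using that in the commuting variables $\{t_\eta\}_{\eta \in X}$ one has $t^{\multi_1+\multi_2} = t^{\multi_1}\cdot t^{\multi_2}$, I recognize the result as the product $N_X(f)\cdot N_X(g)$.

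The main subtlety — really the only one — is bookkeeping in the reindexing step: one must check that the decomposition sum appearing in Corollary~\ref{cor:sum} (indexed by ordered pairs of submultisets whose disjoint union is $\multi$) matches exactly the Cauchy product arising from expanding $N_X(f)\cdot N_X(g)$ in the monomials $t^\multi$. This is forced by the multiplicative encoding of multiset union in $t$-monomials, so no combinatorial correction factors appear. Given that, no step presents a genuine obstacle: the real work has already been carried out in Lemma~\ref{lem:MSmult} and in the pullback formula of Corollary~\ref{cor:sum}.
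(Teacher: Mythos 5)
Your proposal is correct and follows essentially the same route as the paper: the paper's one-line proof invokes the explicit formula $\N(f)=\sum_{\multi}\ssm(\Tlocus{\multi}{f})/\ssm(\Tlocus{\varnothing}{f})\, t^\multi$ (which is exactly your factorization $\N = N_X\cdot\exp(\MSlog)^{-1}$ evaluated at $f$) together with Corollary~\ref{cor:sum}, the $\multi=\varnothing$ case of which is your Lemma~\ref{lem:MSmult}. Your account just spells out the Cauchy-product bookkeeping that the paper leaves implicit.
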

\begin{proof}
	  The result follows from Formula \eqref{w:N2} and Corollary \ref{cor:sum}.
\end{proof}
\begin{lemma} \label{lem:Nlin}
	The power series $\log(\N)$ is $\svar$-linear.
\end{lemma}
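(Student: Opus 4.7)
The plan is to mirror the proof of Lemma \ref{lem:MSlin} almost verbatim, using Lemma \ref{lem:Nmult} in place of Lemma \ref{lem:MSmult}. By Lemma \ref{lem:linearity} applied with $\underline{x} = \tvar$, it suffices to establish the rescaling identity
\[
\log(\N)(\tvar, 2\cdot\svar) = 2 \cdot \log(\N)(\tvar, \svar) \in \QQ[[\tvar, \svar]].
\]
Since $\log(\N)$ is a power series in $\tvar$ whose coefficients live in $\QQ[[\svar]]$, Proposition \ref{pro:unique} (applied coefficient-by-coefficient in $\tvar$, and truncated in cohomological degree) reduces this to checking the corresponding identity after substitution into every map $f\in\CC_l$.

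For any such $f$, I first apply Lemma \ref{lem:Nmult} to the pair $(f,f)$, which yields
\[
\Delta^*\N(f^{(2)}) = \N(f)\cdot\N(f) = \N(f)^2 \in \coh^\bullet(N)[[\tvar]].
\]
On the other hand, Proposition \ref{pro:sum2} gives $s_\lambda(f^{(2)}) = \pi_1^*s_\lambda(f)+\pi_2^*s_\lambda(f)$, so $\Delta^*s_\lambda(f^{(2)}) = 2\,s_\lambda(f)$; consequently, the substitution prescribed by Definition \ref{df:substitution} identifies $\Delta^*\N(f^{(2)})$ with $\N(\tvar,2\cdot\svar)(f)$. Combining the two equalities gives
\[
\N(\tvar,2\cdot\svar)(f) = \N(f)^2,
\]
and taking logarithms (legitimate since both $\N(f)$ and $\N(\tvar,2\svar)(f)$ start with $1$, by Lemma \ref{lem:master1} and the definition of $\N$) yields $\log(\N)(\tvar,2\svar)(f) = 2\log(\N)(\tvar,\svar)(f)$.

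The only subtlety is the uniqueness step: one must read off from both sides the coefficient of each monomial $t^\multi$, obtaining power series identities in $\QQ[[\svar]]$ which hold after substitution into every $f\in\CC_l$, and then invoke Proposition \ref{pro:unique} to conclude the identity holds formally. I expect no real obstacle here, since the pattern follows Lemma \ref{lem:MSlin} exactly; the main conceptual point is simply that the multiplicativity of $\N$ under disjoint union (Lemma \ref{lem:Nmult}) is precisely the structural ingredient needed so that $\log\N$ linearizes the scaling action of $2$ on the $\svar$-variables, forcing $\svar$-linearity through Lemma \ref{lem:linearity}.
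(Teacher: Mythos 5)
Your proof is correct and follows exactly the paper's argument: apply Lemma \ref{lem:linearity} with auxiliary variables $\tvar$, reduce the rescaling identity $\log(\N)(\tvar,2\svar)=2\log(\N)(\tvar,\svar)$ to a statement after substitution via Proposition \ref{pro:unique}, and verify it for each $f\in\CC_l$ by combining Proposition \ref{pro:sum2} (giving $\Delta^*s_\lambda(f^{(2)})=2s_\lambda(f)$) with Lemma \ref{lem:Nmult} (giving $\Delta^*\N(f^{(2)})=\N(f)^2$), then taking logarithms. The extra care you give to the uniqueness step and to the invertibility of $\N(f)$ is consistent with the paper's reasoning, just spelled out a bit more.
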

\begin{proof}
	We proceed analogously as in the proof of Lemma \ref{lem:MSlin}. By Lemma \ref{lem:linearity} we need to show that
	\begin{align} \label{w:4}
		\log(\N)(\tvar,2\cdot \svar)=2\cdot \log(\N)(\tvar,\svar) \in \QQ[[\tvar,\svar]] \,.
	\end{align}
	Let $f:M\to N$ be a map in $\CC_l$ and $\Delta:N\to N\times N$ the diagonal embedding. We have 
	\begin{align*} 
		\N(\tvar,2\cdot\svar)(f)=
		\Delta^*\N(f^{(2)})=\N(f)^2
		\in \coh^\bullet(N)[[\tvar]]\,.
	\end{align*}
	The first equality follows from Proposition \ref{pro:sum2} and the second from Lemma \ref{lem:Nmult}.
	Applying logarithm we obtain that equation \eqref{w:4}
	holds after substitution to any $f\in \CC_l$. Proposition \ref{pro:unique} implies that it holds also in $\QQ[[\tvar,\svar]]$. 
\end{proof}
This finishes the proof of Theorem \ref{thm:indS}. Let us note an immediate consequence of Lemma \ref{lem:Nlin}.
\begin{cor}\label{cor:Nlin}
	For an arbitrary integer $k\in \ZZ$ we have
	$$\N(\tvar,k\cdot\svar)=\N(\tvar,\svar)^k\,.$$
\end{cor}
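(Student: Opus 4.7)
The plan is to deduce this from Lemma \ref{lem:Nlin}, which has just been established: the power series $\log(\N)$ is $\svar$-linear. Since $\N$ has constant term $1$, the identity $\N = \exp(\log(\N))$ holds in $\QQ[[\tvar,\svar]]$, so we may pass between $\N$ and $\log(\N)$ freely.

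First, I would observe that $\svar$-linearity of $\log(\N)$ means exactly that rescaling every $\svar$-variable by a scalar $k$ multiplies the whole series by $k$; that is,
\[
\log(\N)(\tvar, k\cdot\svar) = k\cdot \log(\N)(\tvar,\svar) \in \QQ[[\tvar,\svar]].
\]
This is a statement over $\QQ$, hence in particular valid for every $k\in\ZZ$.

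Next, I would exponentiate both sides. Using the standard identity $\exp(k\cdot F) = \exp(F)^k$ for any power series $F$ with zero constant term and any integer $k$, I obtain
\[
\N(\tvar,k\cdot\svar) = \exp\bigl(\log(\N)(\tvar,k\cdot\svar)\bigr) = \exp\bigl(k\cdot\log(\N)(\tvar,\svar)\bigr) = \N(\tvar,\svar)^k,
\]
which is the required equality. There is no real obstacle here: the whole content of the corollary has been absorbed into the $\svar$-linearity statement of Lemma \ref{lem:Nlin}, and the remaining step is a formal manipulation with the exponential and logarithm of formal power series. The only thing worth being careful about is ensuring the $\exp$/$\log$ operations are well-defined (which requires the constant term of $\N$ to equal $1$; this follows from the fact that the $\multi=\varnothing$ term contributes $\ThT{\varnothing}/\ThT{\varnothing}=1$ in formula \eqref{w:N2}, while every other term carries a nontrivial monomial in $\tvar$).
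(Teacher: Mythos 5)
Your proof is correct and matches the paper's approach: the paper states the corollary as an ``immediate consequence'' of Lemma~\ref{lem:Nlin} without further proof, and you have simply supplied the standard exponentiation step (valid for negative $k$ as well since $\exp(-F)=\exp(F)^{-1}$) together with the observation that $\N$ has constant term $1$ so that $\exp/\log$ are well-defined.
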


\subsection{Proof of the source induction} \label{sec:induction4}

We prove the generating function version of Theorem \ref{thm:indS} from Corollary \ref{cor:IndS}. We use notation from there. \\
Consider the series $\M=M_X\cdot \exp(\MSlog)^{-1}$. We have $\exp(\KazT{\varnothing}(0))=1$, cf. Lemma \ref{lem:master1}, thus the Theorem is equivalent to
\begin{align} \label{w:M}
    \M(\tvar,\cvar,\svar)=\M(\tvar,\cvar,0)\cdot \N(\tvar,\svar) \in \QQ[[\tvar,\cvar,\svar]]\,,
\end{align}
For a map $f:M\to N$ in $\CC_l$ we have
\begin{align}\label{w:M2}
	\M(f)=\sum_{\multi\in X_S}
	\frac{\ssm(\Slocus{\multi}{f})}{f^*\ssm(\Tlocus{\varnothing}{f})}\cdot t^\multi
	\in \coh^\bullet(M)[[\tvar]]\,,
\end{align}

\begin{lemma} \label{lem:Mmult}
	Let $f:M_1\to N$ and $g:M_2\to N$ be maps in $\CC_l$. Let $\Gamma$ be the graph of $f$. Then
	$$\Gamma^*\M(f\sq g)_{|M_1\times N}=\M(f)\cdot f^*\N(g) \in
	\coh^\bullet(M_1)[[\tvar]]\,.$$
\end{lemma}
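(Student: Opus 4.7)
The plan is to mimic the proof of Lemma \ref{lem:Nmult}, but substitute the source half of Corollary \ref{cor:sum} for the target half. Concretely, I expand the left-hand side using
$$\M(h) = \sum_{\multi \in X_S}\frac{\ssm(\Slocus{\multi}{h})}{h^*\ssm(\Tlocus{\varnothing}{h})}\,t^{\multi}$$
applied to $h = f\sq g$ and then pulled back via $\Gamma$ from the $M_1\times N$ component of the domain of $f\sq g$, and exhibit the result as a Cauchy product of a sum indexed by S-multisingularities and a sum indexed by T-multisingularities.

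For the numerator, the source part of Corollary \ref{cor:sum} gives
$$\Gamma^*\ssm(\Slocus{\multi}{f\sq g}) = \sum_{\multi_1+\multi_2=\multi,\,\eta\in\multi_1}\ssm(\Slocus{\multi_1}{f})\cdot f^*\ssm(\Tlocus{\multi_2}{g}),$$
where $\eta$ denotes the distinguished element of $\multi$. For the denominator, restricted to $M_1 \times N$ the map $f\sq g$ acts as $f\times \id_N$, and Proposition \ref{pro:sum1} factors $\ssm(\Tlocus{\varnothing}{f\sq g})$ as $\pi_1^*\ssm(\Tlocus{\varnothing}{f})\cdot \pi_2^*\ssm(\Tlocus{\varnothing}{g})$ on $N\times N$. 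Pulling this factorization back through $(f\times\id_N)\circ\Gamma = \Delta \circ f$ yields $f^*\ssm(\Tlocus{\varnothing}{f})\cdot f^*\ssm(\Tlocus{\varnothing}{g})$.

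Substituting these two computations into the expanded formula and regrouping the sum over $\multi$ as a double sum over pairs $(\multi_1,\multi_2)$, the expression separates cleanly: the $\multi_1$-indexed factor, together with the $S$-denominator $f^*\ssm(\Tlocus{\varnothing}{f})$, reassembles into $\M(f)$, while the $\multi_2$-indexed factor, together with the $T$-denominator $f^*\ssm(\Tlocus{\varnothing}{g})$, reassembles into $f^*\N(g)$. The one mild obstacle is purely combinatorial bookkeeping: one must check that the constraint $\eta\in\multi_1$ aligns with the S/T dichotomy, so that $\multi_1$ contributes an S-multisingularity term to $\M(f)$ while $\multi_2$ contributes a T-multisingularity term to $\N(g)$ with no double-counting. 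Beyond that, the proof is a direct substitution and regrouping, strictly parallel to Lemma \ref{lem:Nmult}.
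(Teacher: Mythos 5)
Your proof is correct and takes essentially the same approach as the paper, which simply cites Formula \eqref{w:M2} and Corollary \ref{cor:sum}; you spell out the details the paper leaves implicit, including the correct identification $(f\times\id_N)\circ\Gamma=\Delta\circ f$ for the denominator and the Cauchy-product regrouping, and the ``combinatorial bookkeeping'' you flag is indeed the trivial bijection $(\multi_1,\multi_2)\leftrightarrow(\multi_1+\multi_2,\multi_1,\multi_2)$ between $X_S\times X_T$ and decompositions of $\multi\in X_S$ with $\eta\in\multi_1$.
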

\begin{proof}
	The result follows from Formula \eqref{w:M2} and Corollary \ref{cor:sum}.
\end{proof}

\begin{lemma} \label{lem:Sind}
	We have
	$$\M(\tvar,\cvar,2\cdot\svar)=\M(\tvar,\cvar,\svar)\cdot \N(\tvar,\svar) \in \QQ[[\cvar,\svar,\tvar]]\,,$$
\end{lemma}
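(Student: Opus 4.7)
The plan is to mimic the doubling-trick used in Lemma~\ref{lem:MSlin} and Lemma~\ref{lem:Nlin}, but now for the source generating series $\M$. The key ingredients are Lemma~\ref{lem:Mmult} (multiplicativity of $\M$ under~$\sq$), Proposition~\ref{pro:sum2} (the asymmetric behavior of Chern versus Landweber--Novikov classes under $\sq$), and Proposition~\ref{pro:unique} (to upgrade an identity that holds after substituting every $f\in\CC_l$ to a formal identity in $\QQ[[\tvar,\cvar,\svar]]$).

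Fix $f:M\to N$ in $\CC_l$ and let $\Gamma:M\to M\times N$ be its graph. First I would apply Lemma~\ref{lem:Mmult} with $g=f$ to obtain
\[
\Gamma^*\M(f\sq f)_{|M\times N}=\M(f)\cdot f^*\N(f)\in \coh^\bullet(M)[[\tvar]].
\]
Next I would compute the left-hand side directly from the substitution rule of Definition~\ref{df:substitution}(2) using Proposition~\ref{pro:sum2}. On the $M\times N$ component of the source of $f\sq f$, the Chern classes satisfy $c_k(f\sq f)_{|M\times N}=\tilde{\pi}_1^*c_k(f)$, while the pulled-back Landweber--Novikov classes become
\[
(f\sq f)^*s_\lambda(f\sq f)_{|M\times N}=\tilde{\pi}_1^*f^*s_\lambda(f)+\tilde{\pi}_2^*s_\lambda(f).
\]
Since $\Gamma^*\tilde{\pi}_1^*=\id_M^*$ and $\Gamma^*\tilde{\pi}_2^*=f^*$, pulling back via $\Gamma$ leaves the Chern variables unchanged but doubles the Landweber--Novikov contribution:
\[
\Gamma^*c_k(f\sq f)_{|M\times N}=c_k(f),\qquad \Gamma^*(f\sq f)^*s_\lambda(f\sq f)_{|M\times N}=2\,f^*s_\lambda(f).
\]
Therefore $\Gamma^*\M(f\sq f)_{|M\times N}=\M(\tvar,\cvar,2\cdot\svar)(f)$, and combining with the equation from Lemma~\ref{lem:Mmult} gives
\[
\M(\tvar,\cvar,2\cdot\svar)(f)=\M(f)\cdot f^*\N(f)
\]
for every $f\in\CC_l$. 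Proposition~\ref{pro:unique} then converts this into the desired formal identity, noting that $f^*\N(f)$ is exactly what the source substitution produces from $\N(\tvar,\svar)\in \QQ[[\tvar,\svar]]$.

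The main subtlety I expect is the bookkeeping in the middle step: one must keep straight that on the $M\times N$ component the Chern classes of $f\sq f$ are pulled back only from the first factor (they do \emph{not} double), whereas the Landweber--Novikov classes live on the common target $N\times N$ and contribute from both factors after the graph pullback. This asymmetry -- rescaling $\svar$ by $2$ while leaving $\cvar$ unchanged -- is precisely what introduces the $\N(\tvar,\svar)$ factor on the right-hand side and reflects the fact that $\KazS{\multi}$ is the $\svar$-degree zero part of $\ThS{\multi}$. Once this is handled, the argument is formally parallel to Lemma~\ref{lem:Nlin}.
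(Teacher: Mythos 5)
Your proposal is correct and follows essentially the same doubling argument the paper uses (set $g=f$ in Lemma~\ref{lem:Mmult}, identify $\Gamma^*\M(f^{(2)})_{|M\times N}$ with $\M(\tvar,\cvar,2\cdot\svar)(f)$ via Proposition~\ref{pro:sum2}, then invoke Proposition~\ref{pro:unique}); the paper states the first equality with only a one-line justification, and your careful tracking of $\Gamma^*\tilde{\pi}_1^*=\id_M^*$ versus $\Gamma^*\tilde{\pi}_2^*=f^*$ correctly explains why the $\svar$-variables double while the $\cvar$-variables do not.
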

\begin{proof}
	We proceed analogously as in Lemmas \ref{lem:MSlin} and \ref{lem:Nlin}.
	Let $f:M\to N$ be a map in $\CC_l$ and $\Gamma:M\to M\times N$ its graph. We have
	\begin{align*}
		(\M(\tvar,\cvar,2\cdot\svar)) (f)=\Gamma^*\M(f^{(2)})_{|M\times N}=\M(f)\cdot f^*\N(f)
		\in\coh^\bullet(M)[[\tvar]] \,.
	\end{align*}
	The first equation follows from Proposition \ref{pro:sum2} and the second from Lemma \ref{lem:Mmult}.
	The above formula holds for every $f\in \CC_l$. Proposition \ref{pro:unique} implies that it holds also in $\QQ[[\cvar,\svar,\tvar]]$.
\end{proof}
\begin{lemma} \label{lem:Sind2}
	For an arbitrary integer $k$ we have
	$$\M(\tvar,\cvar,k\cdot\svar)=\M(\tvar,\cvar,\svar)\cdot \N(\tvar,(k-1)\cdot\svar)\,.$$
\end{lemma}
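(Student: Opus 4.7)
The plan is to adapt the proof of Lemma \ref{lem:Sind} by replacing the twofold disjoint union $f^{(2)}$ with the $k$-fold union $f^{(k)}$ for each positive integer $k$, and then to extend the resulting identity from positive integers to arbitrary $k\in\ZZ$ by a polynomial interpolation argument.

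The key intermediate statement, which I would prove by induction on $k\ge 1$, is the iterated form of Lemma \ref{lem:Mmult}: for every $f\in\CC_l$ and every $k\ge 1$,
\[
\Gamma_k^* \M(f^{(k)})|_{M\times N^{k-1}} = \M(f) \cdot (f^*\N(f))^{k-1},
\]
where $\Gamma_k:M\to M\times N^{k-1}$ is the iterated graph $x\mapsto (x,f(x),\ldots,f(x))$. The base case $k=1$ is trivial. For the inductive step I decompose $f^{(k+1)} = f\sq f^{(k)}$, whose two factors have targets $N$ and $N^k$ respectively, and apply the general form of Proposition \ref{pro:sum1} that allows distinct targets. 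Summing over multisingularities with the weights $t^\multi/|\Aut(\multi)|$ and dividing by the corresponding factorization of $\ssm(\Tlocus{\varnothing}{f^{(k+1)}})$ yields
\[
\M(f^{(k+1)})|_{M\times N^k} = \tilde{\pi}_1^*\M(f)\cdot\tilde{\pi}_2^*\N(f^{(k)}).
\]
Pulling back via $\Gamma_{k+1}$, using $\tilde\pi_2\circ\Gamma_{k+1}=\Delta_k\circ f$ for the diagonal $\Delta_k:N\to N^k$, and invoking the iterated form of Lemma \ref{lem:Nmult} (namely $\Delta_k^*\N(f^{(k)})=\N(f)^k$, itself proved by induction using the distinct-target T-version of Proposition \ref{pro:sum1}) closes the induction.

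To finish, I would identify the left-hand side $\Gamma_k^*\M(f^{(k)})|_{M\times N^{k-1}}$ with $\M(\tvar,\cvar,k\svar)(f)$. By iterating Proposition \ref{pro:sum2}, $c_t(f^{(k)})|_{M\times N^{k-1}}=\tilde\pi_1^* c_t(f)$ and $s_\lambda(f^{(k)})=\sum_{j=1}^k\pi_j^* s_\lambda(f)$, so $\Gamma_k^*(f^{(k)})^*$ transforms the $\svar$-variables by $s_\lambda\mapsto k\cdot f^* s_\lambda(f)$ while preserving the $\cvar$-variables. Together with Corollary \ref{cor:Nlin}, which identifies $(f^*\N(f))^{k-1}$ with $\N(\tvar,(k-1)\svar)(f)$ under the source substitution, this gives
\[
\M(\tvar,\cvar,k\svar)(f)=\M(\tvar,\cvar,\svar)(f)\cdot\N(\tvar,(k-1)\svar)(f)
\]
for every $f\in\CC_l$ and positive integer $k$. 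Proposition \ref{pro:unique} then lifts this to an identity in $\QQ[[\cvar,\svar,\tvar]]$. Since each coefficient of the two sides in $\QQ[[\cvar,\svar,\tvar]]$ is a polynomial in $k$ (of degree at most the $\svar$-degree of the chosen monomial), agreement for all positive integers forces agreement on all of $\ZZ$.

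The hard part will be setting up the distinct-target inductive step: Lemma \ref{lem:Mmult} is only stated for two maps with a \emph{common} target, so I must first derive its distinct-target analog directly from Proposition \ref{pro:sum1} (which already permits distinct targets), while carefully tracking how the $\exp(\MSlog)$-denominator built into $\M$ and $\N$ factorizes across the two components. Once this variant is in hand, the rest of the argument proceeds routinely.
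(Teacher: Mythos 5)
Your proposal is correct but takes a genuinely different, and considerably more laborious, route than the paper. The paper's proof is purely formal: starting from the $k=2$ identity $\M(\tvar,\cvar,2\svar)=\M(\tvar,\cvar,\svar)\cdot\N(\tvar,\svar)$ of Lemma \ref{lem:Sind}, one substitutes $2\svar$ for $\svar$ to get $\M(4\svar)=\M(2\svar)\cdot\N(2\svar)$, applies Corollary \ref{cor:Nlin} to rewrite $\N(\svar)\cdot\N(2\svar)=\N(\svar)^3=\N(3\svar)$, and continues inductively to obtain the identity for all $k=2^n$. Since each monomial coefficient on either side is a polynomial in $k$, agreement on infinitely many values (the powers of $2$) forces agreement for all $k\in\ZZ$. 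This entirely avoids the need to return to the geometry of disjoint-union maps. By contrast, you re-run the geometric argument of Lemmas \ref{lem:Mmult}--\ref{lem:Sind} with the $k$-fold union $f^{(k)}$ in place of $f^{(2)}$, and as you note yourself this requires distinct-target variants of Proposition \ref{pro:sum1} and Lemma \ref{lem:Mmult} (because $f$ and $f^{(k)}$ have targets $N$ and $N^k$ respectively), together with careful tracking of iterated graph maps and how the $\exp(\MSlog)$-denominators factor across components. Your argument does work — Proposition \ref{pro:sum1} already allows distinct targets and the induction closes as you describe — but the extra generality of proving the identity for all positive integers $k$ buys nothing over the paper's powers-of-$2$ shortcut, since both end with the same polynomial-interpolation step. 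The paper's version is shorter and sidesteps the ``hard part'' you flag; your version is a valid alternative that makes the geometric content of the identity more explicit.
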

\begin{proof}
	Lemma \ref{lem:Sind} implies that the desired formula holds for $k=2$. By induction (using Corollary  \ref{cor:Nlin}) we obtain that it holds for $k$ equal to an arbitrary power of $2$. \\
	Choose a monomial in $\QQ[[\cvar,\svar,\tvar]]$. Coefficients corresponding to this monomial in the right and the left hand side are polynomials in $k$. These two polynomials agree for infinitely many values, so they are equal.
\end{proof}
\begin{proof}[Proof of Theorem \ref{thm:indS}]
	Lemma \ref{lem:Sind2} for $k=0$ implies that
	$$\M(\tvar,\cvar,0)=\M(\tvar,\cvar,\svar)\cdot \N(\tvar,(-1)\cdot\svar)\,,$$
	By Corollary \ref{cor:Nlin} for $k=(-1)$ we have
	$$\N(\tvar,(-1)\cdot\svar)=\N(\tvar,\svar)^{-1}\,. $$
	Combining these two formulas we obtain an equivalent form of the theorem from formula \eqref{w:M}.
\end{proof}

\section{Relation between source and target polynomials} \label{s:F}
In this section we study the relation between the source and target SSM-Thom polynomials. \\
Let $(\eta,\multi)$ be an S-multisingularity and $f$ a map in $\CC_l$. Denote by $k$ the multiplicity of the chosen singularity $\eta$ in $\multi$. The restriction $f|_{\Slocus{\multi}{f}}:\Slocus{\multi}{f}\to \Tlocus{\multi}{f}$ is a $k$ to $1$ map. Characteristic classes of the strata $\Slocus{\multi}{f}$ and $\Tlocus{\multi}{f}$ are related by the formulas
\begin{align} \label{w:F}
f_*\left(\frac{\ssm(\Slocus{\multi}{f})}{c_\bullet(T_f)}\right)=
f_*\left(\frac{\csm(\Slocus{\multi}{f})}{f^*c_\bullet(TN)}\right)
=k\cdot \ssm(\Tlocus{\multi}{f})
\,, \qquad 
f_*[\overline{\Slocus{\multi}{f}}]=k\cdot[\overline{\Tlocus{\multi}{f}}]
\,.
\end{align}
These operations can be described on the level of formal power series.
\begin{df}
	Let $\F,\FF:\QQ[[\cvar,\svar]] \to \QQ[[\svar]]$ be maps of $\QQ[[\svar]]-$modules defined by
	$$\F\left(\sum_{\lambda} a_\lambda c_\lambda\right)=\sum_{\lambda} a_\lambda s_\lambda\,,\qquad
    %\FF(W)=\F\left(\frac{W}{1+c_1+c_2+\dots}\right)\,.$$
    \FF(W)=\F\left(\frac{W}{1+c_1+c_2+\dots}\right)\,.$$
\end{df}

\begin{ex}
	We have
	$$\F(1+s_1+c_1s_1+2c_1^2+3c_2c^2_3+c_2s_1-s_2c_1)=s_\varnothing+s_1s_\varnothing+s_1^2+2s_{1,1}+3s_{2,3,3}\,.$$
\end{ex}
Let $f:M\to N$ be an arbitrary proper map of smooth varieties. The map $F$ describes the pushforward map $f_*:\coh^\bullet(M)\to \coh^{\bullet}(N)$, i.e. for a power series $A \in \QQ[[\cvar,\svar]]$ we have
\begin{align} \label{w:F2}
	\F(A)(f)=f_*(A(f))\in \coh^\bullet(N)\,.
\end{align}
For the classical Thom polynomials we have $\F(\ThScl{\multi})= \ThTcl{\multi}\,. $
The map $\FF$ plays the same role in the case of SSM-Thom polynomials.
\begin{pro} \label{pro:F}
	Assume that Conjecture \ref{con:SSM}[S] holds, then Conjecture \ref{con:SSM}[T] also holds. Moreover, for an S-multisingularity $(\eta,\multi)$ we have
	$$\FF(\ThS{\multi})= \ThT{\multi}\,,\qquad \FF(\KazS{\multi})=\KazT{\multi}\,.$$
\end{pro}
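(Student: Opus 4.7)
The plan is to exploit the geometric content of $\FF$ encoded in \eqref{w:F} and \eqref{w:F2}, handle the case $\multi=\varnothing$ separately since it has no S-counterpart, and finally extract the Kazarian equality by an induction that plays the source and target structure theorems against each other.

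First I would record the key algebraic property of $\FF$: combining its definition with \eqref{w:F2} applied to $W/c_\bullet$, for every $W\in\QQ[[\cvar,\svar]]$ and $f\in\CC_l$,
$$\FF(W)(f)=f_*\!\left(\frac{W(f)}{c_\bullet(T_f)}\right)\in \coh^\bullet(N).$$
Moreover $\FF$ is $\QQ[[\svar]]$-linear because $\F$ is. Now for a nonempty S-multisingularity $(\eta,\multi)$ with $\eta$ of multiplicity $k$, applying this identity to $W=\ThS{\multi}$ and invoking Conjecture~\ref{con:SSM}[S] together with the first equality of \eqref{w:F} gives
$$\FF(\ThS{\multi})(f)=k\cdot|\Aut(\eta,\multi)|\cdot\ssm(\Tlocus{\multi}{f})=|\Aut(\multi)|\cdot\ssm(\Tlocus{\multi}{f}),$$
where the last equality uses $|\Aut(\multi)|=k\cdot|\Aut(\eta,\multi)|$. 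By Proposition~\ref{pro:unique} the power series $\FF(\ThS{\multi})\in\QQ[[\svar]]$ is the unique element with this evaluation property, so declaring $\ThT{\multi}:=\FF(\ThS{\multi})$ establishes both the nonempty half of Conjecture~\ref{con:SSM}[T] and the identity $\FF(\ThS{\multi})=\ThT{\multi}$.

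The case $\multi=\varnothing$ has no S-counterpart, so I would instead use the stratification $N=\bigsqcup_\multi \Tlocus{\multi}{f}$, additivity of $\ssm$, and the normalization $\ssm(N)=1$ to define
$$\ThT{\varnothing}:=1-\sum_{\multi\ne\varnothing}\frac{\ThT{\multi}}{|\Aut(\multi)|}\ \in\ \QQ[[\svar]].$$
Convergence in the cohomological grading must be checked: each degree-$d$ piece receives contributions only from $\multi$ with $\tcodim(\multi)\le d$, of which there are finitely many (finitely many monosingularity types of bounded $\tcodim$, combined with additivity of $\tcodim$ over multisets). Substituting any $f\in\CC_l$ then yields $\ThT{\varnothing}(f)=\ssm(\Tlocus{\varnothing}{f})$, completing Conjecture~\ref{con:SSM}[T].

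With Conjecture~\ref{con:SSM}[T] in hand, Theorems~\ref{thm:indS} and~\ref{thm:indT} apply, and I would prove $\FF(\KazS{\multi})=\KazT{\multi}$ by induction on $|\multi|$. Applying $\FF$ ($\QQ[[\svar]]$-linearly) to the source decomposition of Theorem~\ref{thm:indS} and using $\FF(\ThS{\multi})=\ThT{\multi}$ together with Theorem~\ref{thm:indT} yields
$$\sum_{I\ni 1}\FF(\KazS{\multi_I})\cdot\ThT{\multi_{I'}}=\sum_{I\ni 1}\KazT{\multi_I}\cdot\ThT{\multi_{I'}}.$$
The $I=[k]$ terms contribute $\FF(\KazS{\multi})\cdot\exp(\MSlog)$ on the left and $\KazT{\multi}\cdot\exp(\MSlog)$ on the right; by induction all smaller-$|I|$ terms cancel, and invertibility of $\exp(\MSlog)$ (Theorem~\ref{thm:MS}) gives $\FF(\KazS{\multi})=\KazT{\multi}$. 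The hard part I expect is the empty-multisingularity step: since it has no S-counterpart, one must carefully verify both the convergence of the defining infinite sum and the implicit finiteness claim about multisingularities of bounded $\tcodim$.
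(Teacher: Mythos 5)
Your treatment of the nonempty case matches the paper's argument exactly: you combine the evaluation identity $\FF(W)(f)=f_*\bigl(W(f)/c_\bullet(T_f)\bigr)$ coming from \eqref{w:F2} with the geometric pushforward formula \eqref{w:F} and the count $|\Aut(\multi)|=k\cdot|\Aut(\eta,\multi)|$, then invoke Proposition~\ref{pro:unique}. This part is correct and is the same route the paper takes.

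For $\FF(\KazS{\multi})=\KazT{\multi}$ you take a genuinely different, and substantially heavier, route. The paper never touches the Structure Theorems here; it simply notes that $W\mapsto W/c_\bullet$ preserves the $\svar$-grading, that $\F$ raises it by exactly one, hence that $\FF$ sends the $\svar$-degree-zero part of $\ThS{\multi}$ (which is $\KazS{\multi}$) to the $\svar$-degree-one part of $\ThT{\multi}$ (which is $\KazT{\multi}$, because the constant term of $\ThT{\multi}$ vanishes for nonempty $\multi$). Your inductive cancellation using Theorems~\ref{thm:indS} and~\ref{thm:indT} together with the invertibility of $\exp(\MSlog)$ is correct, and it is a nice consistency check between the source and target decompositions, but it presupposes that both structure theorems are already available, which in turn requires Conjecture~\ref{con:SSM}[T] to have been established first. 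The grading observation is a one-line alternative that carries no such dependency.

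The step you yourself flagged as delicate is where the genuine gap sits. The finiteness of monosingularity types of bounded $\tcodim$ is \emph{false} in general: outside Mather's nice-dimension range there are positive-dimensional moduli of contact orbits with constant codimension, so for degrees past the Mather bound the set of $\multi$ with $\tcodim(\multi)\le d$ is infinite, and your defining sum $1-\sum_{\multi\neq\varnothing}\ThT{\multi}/|\Aut(\multi)|$ does not converge degree by degree. To be fair, the paper's own proof also passes over $\multi=\varnothing$ in silence (the identification $\ThT{\varnothing}=\ThS{A_0}(0,\svar)$ appears only in the example after Corollary~\ref{cor:IndS}, and that derivation already uses the structure theorems, hence presupposes Conjecture~\ref{con:SSM}[T]). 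So you have noticed a real subtlety, but the infinite-sum construction you propose does not close it; you would need instead a direct argument showing that a single power series obtained from $\ThS{A_0}$ — say $\ThS{A_0}(\cvar=0,\svar)$ — already evaluates to $\ssm(\Tlocus{\varnothing}{f})$, without appealing to any summation over multisingularity types.
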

The first statement is present in \cite{nekarda1}. While $\FF$ is not an isomorphism, its restriction to $\QQ[[\cvar]]$ is a $\QQ$-vector space isomorphism onto the subspace of linear power series in variables $\svar$. Hence we obtain 

\begin{cor}\label{cor:RvsS}
Proposition~\ref{pro:F} determines $\KazS{(\eta,\multi)}$ from $\KazT{\multi}$. The series $\KazS{(\eta,\multi)}$ is independent of the distinguished element $\eta$.
\end{cor}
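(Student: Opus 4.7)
The plan is to combine Proposition~\ref{pro:F} with the observation --- already sketched in the paragraph preceding the corollary --- that $\FF$ becomes an isomorphism when restricted to $\QQ[[\cvar]]$. The proof is essentially formal given the results already in place.

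First, I would verify that $\FF|_{\QQ[[\cvar]]}$ is a $\QQ$-vector space isomorphism onto the subspace of $\svar$-linear power series in $\QQ[[\svar]]$. The map factors as multiplication by $(1+c_1+c_2+\cdots)^{-1}$, a self-bijection of $\QQ[[\cvar]]$, followed by the restriction of $\F$ to $\QQ[[\cvar]]$. The latter sends the monomial $c_\lambda$ to the variable $s_\lambda$, which is manifestly a $\QQ$-linear bijection onto the subspace of $\svar$-linear series. An explicit inverse of $\FF|_{\QQ[[\cvar]]}$ sends a linear series $\sum_\lambda a_\lambda s_\lambda$ to $(1+c_1+c_2+\cdots)\cdot \sum_\lambda a_\lambda c_\lambda$.

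Second, I would apply this to $\KazS{(\eta,\multi)}$ and $\KazT{\multi}$. By definition $\KazS{(\eta,\multi)} = \ThS{\multi}(\cvar,0)$ lies in $\QQ[[\cvar]]$, while $\KazT{\multi}$ is $\svar$-linear by construction. Proposition~\ref{pro:F} gives the identity $\FF(\KazS{(\eta,\multi)}) = \KazT{\multi}$, and the isomorphism above uniquely recovers $\KazS{(\eta,\multi)}$ from $\KazT{\multi}$ via the explicit inverse formula.

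Third, since $\KazT{\multi}$ depends only on the T-multisingularity $\multi$ --- with no reference to a distinguished element --- the uniqueness immediately implies that $\KazS{(\eta,\multi)}$ is independent of $\eta$. I do not anticipate any serious obstacle; the only routine verification needed is that $\KazS{(\eta,\multi)}$ really lies in $\QQ[[\cvar]]$, which follows directly from its definition as the $\svar$-degree zero part of $\ThS{\multi}$.
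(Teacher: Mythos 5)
Your proof is correct and matches the paper's argument exactly: the paper states (in the sentence immediately preceding the corollary) that $\FF|_{\QQ[[\cvar]]}$ is a $\QQ$-linear isomorphism onto the $\svar$-linear series, and then deduces the corollary by combining this with $\FF(\KazS{\multi})=\KazT{\multi}$ from Proposition~\ref{pro:F}. You simply spell out the bijectivity of $\F|_{\QQ[[\cvar]]}$ and record the explicit inverse, which the paper leaves implicit, but the underlying reasoning is identical.
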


\begin{proof}[Proof of Proposition \ref{pro:F}]
	Let $(\eta,\multi)$ be an S-multisingularity and $f$ a map in $\CC_l$.
	Formulas \eqref{w:F} and \eqref{w:F2} imply that 
	$$
	\FF(\ThS{\multi})(f)=
	f_*\left(\frac{|\Aut(\eta_1,\multi)|\cdot \ssm(\Slocus{\multi}{f})}{c_\bullet(T_f)}\right)=
	|\Aut(\multi)|\cdot \ssm(\Tlocus{\multi}{f})\,.
	$$
	Therefore, the series $\ThT{\multi}:=\FF(\ThS{\multi})$ satisfies  Conjecture \ref{con:SSM}[T].  \\
	For the second part consider the $\svar$-grading on the rings $\QQ[[\cvar,\svar]]$ and $\QQ[[\svar]]$. The operation $W \to \frac{W}{1+c_1+c_2+\dots}$ preserves this grading and the operation $\F$ increases it by one. The polynomial $\KazS{\multi}$ is the degree zero part of $\ThS{\multi}$. Therefore its image in $\FF$ is the $\svar$-linear part of $\FF(\ThS{\multi})$. Thus $\FF(\KazS{\multi})=\KazT{\multi}$.
\end{proof}

\begin{ex}
The independence of $\KazS{(\eta,\multi)}$ on the distinguished element $\eta$ does not imply the same for $\ThS{(\eta,\multi)}$. For example, for $l=1$, up to degree 4 we have
\begin{multline*}
    \ThS{(A_0,\{A_0,A_1\})} = 
    \exp(\KazT{\varnothing}) \left( \KazS{A_0A_1} + \KazS{A_0} \KazT{A_1} \right)  =
    \\
    (-2c_{21}-2c_3+s_2) 
    +(-2c_{211} -5c_{31} -c_{22} -4c_4 +2c_{21}s_0 +c_1s_2 + 2c_3s_0 -s_{2}s_0 +s_3)+\ldots ,
\end{multline*}
\begin{multline*}
    \ThS{(A_1,\{A_0,A_1\})} = 
    \exp(\KazT{\varnothing}) \left( \KazS{A_0A_1} + \KazS{A_1} \KazT{A_0} \right)  =
    \\
    (-2c_{21} - 2c_3 + c_2s_0)+ (-2c_{211} - 5c_{31} - c_{22} - 4c_4 + 3c_{21}s_0 - c_2s_0^2   + 3c_3s_0 )+\ldots .
\end{multline*}
While these are not equal, it is instructive to verify that their $\FF$ images are equal (in particular, the $\F$-images in the lowest degree), in accordance with Proposition~\ref{pro:F}.
\end{ex}

\section{Interpolation} \label{s:interpolation}
In this section, we reformulate Conjecture~\ref{con:SSM2} into a computationally testable statement, one that can be verified algorithmically. This approach, together with explicit computer calculations, yields the desired SSM-Thom polynomials of multisingularities, computed up to a prescribed degree at most the Mather bound.

\subsection{Prototypes of singularities with maximal symmetry}
Let $\eta$ be a singularity, $p_\eta$ its prototype, and let $G_\eta$ denote its {\em maximal compact symmetry group}. We write $\TT_\eta$ for the maximal torus of $G_\eta$, and denote by $\rhoS$ and $\rhoT$ the corresponding representations on the source and target spaces. The map
$f=B_{\TT_{\eta}}p_{\eta}$ 
introduced in \eqref{eq:BGf} in Section~\ref{sec:CC_l}, will play a central role in what follows.

One motivation for focusing on such maps is that many of the ingredients that appear in Thom polynomial formulas take particularly simple forms for such a map $f$. Indeed, we see that $f^*$ is the identity map of $\coh^*(B\TT_\eta)$, and the push-forward map $f_*$ is multiplication by $\eu(\rhoT)/\eu(\rhoS)$, and $c_\bullet(f)=c(\rhoT)/c(\rhoS)$.

\begin{ex} \label{ex:I22_classes}
We met the prototype for $Q=I_{22}$ and $l=1$ in Example \ref{ex:I22,l=1,prototype}, 
\begin{equation*}
p:(x,y,u_1,u_2,u_3,u_4,u_5)\mapsto (x^2+u_1y,y^2+u_2x,u_3x+u_4y+u_5xy,u_1,u_2,u_3,u_4,u_5),
\end{equation*}
whose maximal torus symmetry is $\TT=U(1)^3$ with the representations
\[
\begin{array}{lcll}
   \rhoS & = &  \alpha + \beta & + \alpha^2\bar{\beta} + \beta^2\bar{\alpha} + \gamma\bar{\alpha} + \gamma\bar{\beta} + \gamma\bar{\alpha}\bar{\beta}, \\
   \rhoT & = &  \alpha^2 + \beta^2 + \gamma & + \alpha^2\bar{\beta}+\beta^2\bar{\alpha} + \gamma\bar{\alpha} + \gamma\bar{\beta} + \gamma\bar{\alpha}\bar{\beta},
\end{array}
\]
on the source $\C^7$ and target $\C^8$ spaces. Thus, we have
\[
c_\bullet(f)=\frac{(1+2a)(1+2b)(1+c)}{(1+a)(1+b)}=
1+(a+b+c) + (-a^2-b^2+ab+ac+bc) + \ldots \in \QQ[[a,b,c]].
\]
Moreover, $f^*=\id:\QQ[a,b,c] \to \QQ[a,b,c]$, and for $f_*:\QQ[a,b,c]\to \QQ[a,b,c]$ have
\[ 
f_*(x)= \frac{\eu(\rhoT)}{\eu(\rhoS)} x = \frac{(2a)(2b)(c)}{ab} x = 4c \cdot x,
\]
as well as, for example 
\[
s_{211}(f)=
f_*(c_2(f)c_1(f)^2)=
4c (-a^2-b^2+ab+ac+bc)(a+b+c)^2.
\]
%Characteristic class calculations for prototypes in equivariant cohomology is a matter of explicit polynomial algebra.
\end{ex}

Our interpolation theorem in the next section reduces the task of finding SSM-Thom polynomials to explicit calculations we just illustrated. 

\subsection{Interpolation theorem}
Let $\multi_0$ be a Mather T-multisingularity and $ k \le M(l)+l$ a degree bound.

Assume that for every $\multi\subset \multi_0$ we are given a linear polynomial $\KazT{\multi}$ in the variables $\svar$ of cohomological degree at most $k$. Define the associated $A_\multi$ polynomials by 
\begin{align*} 
	\sum_{\multi\in X_T}
	\frac{A_{\multi}}{|\Aut(\multi)|}\cdot t^\multi=
	\exp \bigg(
	\sum_{\multi\subset\multi_0}
	\frac{\KazT{\multi}}{|\Aut(\multi)|}\cdot t^\multi
	\bigg)\bigg|_{\le k},
\end{align*} 
where $X$ be the set of singularities occurring in $\multi_0$ and we used the notation $X_T$ from Corollary \ref{cor:indT}. In particular, recall that $t^{\multi}$ are monomials in formal variables associated to monosingularities, and $t^\varnothing=1$. 
%(This way we defined polynomials $A_{\multi}$ for all $\multi\in X_T$, however, only those with $\multi\subset\multi_0$ are relevant.)

\begin{thm}[Interpolation Theorem] \label{thm:Interpolation}
We have 
\begin{align}\tag{$\star$} \label{w:Interpolation}
	A_{\multi}(f)=\ssm(\Tlocus{\multi}{f})_{|\le k} \cdot |\Aut({\multi})| 
    %\in \coh^\bullet(N)
\end{align}
for every $\multi\subset \multi_0$ and every $f\in \CC_l$ in the cohomology of the target of $f$, if and only if, the following conditions hold.
\begin{enumerate}
        \item \label{i1} For every monosingularity $\eta\in \multi_0$, polynomial $A_{\{\eta\}}$ satisfies Condition \eqref{w:Interpolation}  for the prototype $p_\eta$ in $\TT_\eta$-equivariant cohomology.
        \item \label{i2} 
        For any $\multi \subset \multi_0$ and monosingularity $\zeta$ with prototype $p_\zeta:M_\zeta\to N_\zeta$ such that $\tcodim(\zeta)\le k$ and $\multi\neq \{\zeta\}$ we have
		$$
        \left(
        A_{\multi}(p_\zeta)\cdot c_\bullet(TN_\zeta)
        \right)|_{r}
        =0 \in \coh_{\TT_\zeta}^{r}(\pt)\,,$$
		for $r\in\{\tcodim(\zeta) ,\dots,k\}.$
        	\end{enumerate}
\end{thm}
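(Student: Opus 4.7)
The theorem is an equivalence, so I would treat the two implications separately.

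For the \textbf{forward direction} ($\Rightarrow$), assume the universal identity~\eqref{w:Interpolation} holds for all $f \in \CC_l$ and $\multi \subset \multi_0$. Condition~(\ref{i1}) is then immediate: it is obtained by specializing the identity to the equivariant map $f = B_{\TT_\eta} p_\eta$ of Section~\ref{sec:CC_l}, which lies in $\CC_l$. For condition~(\ref{i2}), the geometric input is that the preimage $p_\zeta^{-1}(0)$ consists of the single point $(0,0) \in M_\zeta$ with singularity type $\zeta$, so $0 \in \Tlocus{\{\zeta\}}{p_\zeta}$ and in particular $0 \notin \Tlocus{\multi}{p_\zeta}$ whenever $\multi \neq \{\zeta\}$. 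The universal identity identifies $A_\multi(p_\zeta) \cdot c_\bullet(TN_\zeta)$ with $\csm(\Tlocus{\multi}{p_\zeta}) \cdot |\Aut(\multi)|$ (truncated at degree $k$), and Proposition~\ref{thm:weber} kills the top degree $\tcodim(\zeta)$. To extend the vanishing to every degree $r \in \{\tcodim(\zeta),\ldots,k\}$, I would iterate Weber's theorem: for $j = 1,\ldots,k - \tcodim(\zeta)$, consider the trivial unfolding $p_\zeta \times \id_{\C^j}$ with $\C^j$ carrying a torus action by $j$ fresh, algebraically independent characters $b_1,\ldots,b_j$. The unfolded stratum $\Tlocus{\multi}{p_\zeta} \times \C^j$ still avoids the origin, so Proposition~\ref{thm:weber} in the enlarged space kills the top degree $\tcodim(\zeta)+j$ of $\csm(\Tlocus{\multi}{p_\zeta}) \cdot (1+b_1)\cdots(1+b_j)$; matching coefficients of the $b$-monomials in this product (which occur in $b$-degrees $0,1,\ldots,j$) yields the desired vanishing in each required degree.

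The \textbf{reverse direction} ($\Leftarrow$) proceeds by uniqueness. Let $A_\multi^{\mathrm{true}}$ denote the truncated SSM-Thom polynomial, whose existence up to degree $k$ follows from the constructions of Section~\ref{sec:induction} combined with locality of $\ssm$ and the sum formulas of Section~\ref{s:union}. By the forward direction, $A_\multi^{\mathrm{true}}$ satisfies~(\ref{i1}) and~(\ref{i2}); the hypothesized $A_\multi$ does so by assumption. I would then show, by induction on $\scodim(\multi)$, that these two conditions uniquely determine the collection $(\KazT{\multi})_{\multi \subset \multi_0}$, and hence force $A_\multi = A_\multi^{\mathrm{true}}$. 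At each step, the exponential decomposition of Corollary~\ref{cor:indT} writes $A_\multi$ as $\exp(\KazT{\varnothing}) \cdot \KazT{\multi}$ plus products of strictly smaller Kazarian series, already determined by induction; the residual unknown is then a single linear polynomial in $\svar$, and conditions~(\ref{i1}) or~(\ref{i2}) applied at appropriate prototypes $p_\zeta$ yield a linear system in its coefficients whose only solution is zero.

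The \textbf{main obstacle} is this final uniqueness step: verifying that conditions~(\ref{i1}) and~(\ref{i2}) supply enough independent linear equations to pin down every coefficient of $\KazT{\multi}$. Concretely, one must show that the evaluation map $\KazT{\multi} \mapsto \bigl((\KazT{\multi}(p_\zeta) \cdot c_\bullet(TN_\zeta))|_r\bigr)_{\zeta,\, \tcodim(\zeta) \leq r \leq k}$ is injective on the finite-dimensional $\QQ$-vector space of $\svar$-linear polynomials of cohomological degree $\leq k$. This is the genuine interpolation statement underlying the theorem, and it is here that the richness of the Mather list of $55$ monosingularities, together with their equivariant symmetry groups, plays an essential role.
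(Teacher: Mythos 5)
Your \emph{forward} direction is correct, and in fact is more explicit than what the paper writes. The paper cites Proposition~\ref{thm:weber} as giving condition~\eqref{i2} outright, but on its face that proposition only kills the single degree $r=\tcodim(\zeta)=\dim N_\zeta$; extending the vanishing to $r\in\{\tcodim(\zeta),\dots,k\}$ needs the additional (true but unstated) fact that equivariant $\csm$ classes of constructible subsets of a $\TT$-vector space $V$ live in $\coh^{\le\dim V}_\TT(\pt)$. Your device of applying Weber to the trivial unfolding $p_\zeta\times\id_{\C^j}$ with $j$ fresh algebraically independent characters and then matching $b$-monomials is a clean, self-contained way to recover all the required degrees without that lemma, since $\csm(\Tlocus{\multi}{p_\zeta}\times\C^j)=\csm(\Tlocus{\multi}{p_\zeta})\cdot\prod_i(1+b_i)$.

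Your \emph{reverse} direction, however, has a genuine gap and is structurally not the paper's argument. You posit an object ``$A_\multi^{\mathrm{true}}$'' and claim its existence up to degree $k$ ``follows from the constructions of Section~\ref{sec:induction}''; but Section~\ref{sec:induction} is stated under the explicit hypothesis that Conjecture~\ref{con:SSM} holds, and Conjecture~\ref{con:SSM2} is precisely the statement the Interpolation Theorem is designed to verify. The theorem does not presuppose that SSM-Thom polynomials exist: it characterizes exactly when a given ansatz $\{\KazT{\multi}\}$ satisfies $(\star)$. Comparing the hypothesized $A_\multi$ to a presupposed ``true'' polynomial is therefore circular. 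You also flag the needed injectivity of the evaluation map as an unresolved obstacle — but even if you proved it, you would still need the existence of $A_\multi^{\mathrm{true}}$, which is unavailable.

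The paper instead proves $(\star)$ directly, with no uniqueness argument and no a priori existence. It reduces from $\CC_l$ to $\CC_l^{\Ma,k}$ by a codimension count (Lemma~\ref{lem:Interp1}), then to prototypes of multisingularities via the universal singular map (Proposition~\ref{pro:Interp2}), then to prototypes of \emph{mono}singularities via the multiplicativity of $A_\multi$ under $\sq$, which is exactly where the exponential form built from linear $\KazT{\multi}$ enters (Lemma~\ref{lem:unique3}, Corollary~\ref{cor:Interp}). The core is an induction on $t=\tcodim$: for the prototype $p:V\to W$ of $\zeta$ with $\tcodim(\zeta)=t$, the inductive hypothesis gives $(\star)$ over $W-\{0\}$, so the discrepancy $D=A_\multi(p)-\ssm(\Tlocus{\multi}{p})_{|\le k}\cdot|\Aut(\multi)|$ lies in $\operatorname{im}(i_*)$ by the Gysin sequence, forcing $D|_r=0$ for $r<t$; condition~\eqref{i2} then kills $D|_r$ for $t\le r\le k$ (after multiplying by $c_\bullet(TW)$ and running a triangular induction on $r$); condition~\eqref{i1} handles the single case $\multi=\{\zeta\}$. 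You would need to abandon the uniqueness framing and reproduce this inductive structure for the reverse direction to go through.
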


The simplest special case of the theorem, $\multi_0=\varnothing$, is already powerful:
\begin{ex}[Calculation of the Master Series] \label{ex:MS}
Let $\multi_0=\varnothing$, $k\le M(l)+l$, and let $\KazT{\varnothing}$ be a linear polynomial of cohomological degree at most $k$. According to Theorem~\ref{thm:Interpolation}, 
    $\ThTMa{\varnothing}{k}=\exp (\KazT{\varnothing})|_{\leq k}$ satisfies Conjecture~\ref{con:SSM2}[T] if and only if for every monosingularity $\zeta$ with $\tcodim(\zeta)\le k$ we have
    \begin{equation}\label{eq:something}
    \left( 
    \exp(\KazT{\varnothing})(p_\zeta)
    \cdot c_\bullet(TN_\zeta)
    \right)
    |_{r}=0 \in \coh_{\TT_\zeta}^{r}(\pt)
    \qquad
    \text{ for }
    r= \tcodim(\zeta),\dots,k.
    \end{equation}
    This statement may sound counter-intuitive, given that all constraints we put on $\exp(\KazT{\varnothing})$ are homogeneous, without a single normalization condition. However, the normalization condition is hidden in the exponential form $\ThT{\varnothing}=\exp (\KazT{\varnothing})$ that forces the degree zero part of $\ThT{\varnothing}$ to be $1$. The verification of \eqref{eq:something} is a matter of explicit polynomial algebra, illustrated in Section~\ref{s:interpolation}, and for small values of $l$ we obtain
    \begin{align*}
\KazT{\varnothing}^{l=1} = &
-s_\varnothing+\tfrac{1}{2} s_1 +\left( \tfrac{7}{6} s_2- \tfrac{1}{3} s_{11} \right) 
+ \left( s_3 - \tfrac{5}{4} s_{21} + \tfrac{1}{4}s_{111}  \right)
%+( 29/30s_4 -29/30s_{31} -31/15s_{22} +13/10s_{211} -1/5s_{1111} )
+ \dots
\\ 
\KazT{\varnothing}^{l=2} =&
 -s_\varnothing 
+ s_1 
+\left( \tfrac{1}{2}s_2 -s_{11} \right)
+\left( 2s_3-s_{21} +s_{111}\right)
+ \ldots
\\
\KazT{\varnothing}^{l=3} = &
-s_\varnothing
+s_1
+(s_2-s_{11})
+\left( \tfrac{1}{2}s_{3} -2s_{21}+ s_{111} \right)
+\ldots
\\
\KazT{\varnothing}^{l=4} = &
-s_\varnothing
+ s_1
+( s_2-s_{11} )
+( s_3-2s_{21} +s_{111}  )
+ \ldots,
\end{align*}
\[
\KazT{\varnothing}^{l=\infty} = 
\sum_{\lambda} (-1)^{\ell(\lambda)+1} \binom{\ell(\lambda)}{a_1\ a_2\ \ldots\ a_r}s_\lambda
\quad
\text{where }
\quad
\lambda=(1^{a_1}2^{a_2}\ldots),\  \ell(\lambda)=\sum a_i.
\]
Higher degree terms, and $\KazT{\varnothing}$ for other $l$ values, are available on the \cite{TPP}.
\end{ex}
%\JK{Maybe add appendix about implementation? To emphasize that this can be computed without geometric information about singular locus}
\begin{rem}
    A third -- obvious -- condition can be added to the constraints \eqref{i1}, \eqref{i2} in Theorem~\ref{thm:Interpolation}, namely, $\KazT{\multi}|_{r}=0$ for $r<\tcodim({\multi})$. This is not listed in the theorem, because it is forced by the two listed constraints. Yet, computer calculations can be sped up by adding this third condition.
\end{rem}

\begin{rem}
    An interpolation theorem for $\KazS{\multi}$, analogous to Theorem~\ref{thm:Interpolation}, could be phrased and proved similarly. It would then be an effective algorithm to calculate the $\KazS{\multi}$ series. Alternatively, once $\KazT{\multi}$ is known for a non-empty $\multi$ (up to a degree), and the existence of $\KazS{\multi}$ (up to a degree) is established similarly to that of $\KazT{\multi}$, then the corresponding $R_\multi$ value follows from Corollary~\ref{cor:RvsS}. 
\end{rem}

\subsection{Proof of the Interpolation Theorem} \label{sec:Proof of IT}
First we show the easy direction:
%\begin{proof}[Part of proof of Theorem \ref{thm:Interpolation}]
	Suppose that the polynomial $A_{\multi}$ satisfies condition \eqref{w:Interpolation} for every $f\in\CC_l$. Then condition \eqref{i1} is trivial. For the second condition notice that  
    \[
        \ssm\left(\Tlocus{\multi}{\zeta} \subset N_\zeta\right) \cdot c(TN_\zeta) = 
        \csm\left(\Tlocus{\multi}{\zeta} \subset N_\zeta\right),
    \]
    and that $\tcodim(\zeta)=\dim(N_\zeta)$. Then \eqref{i2} follows from Weber's Theorem~\ref{thm:weber}. 
%  \cite[Theorem 20]{WeberCSM}, see also \cite[Thm.~5.4]{RR_ssmTp} for a very analogous argument. 
%\end{proof}

\smallskip

Let us now focus on the other direction of the theorem. First, it is enough to check condition \eqref{w:Interpolation} for maps with controlled singularities.
\begin{lemma}\label{lem:Interp1}
	Let $\multi$ be a Mather T-multisingularity and $B_\multi \in \QQ[\svar]$ a polynomial of degree $k$. If the polynomial $B_{\multi}$ satisfies condition \eqref{w:Interpolation} for every $f\in\CC^{\Ma,k}_l$ then it satisfies it for any $f\in\CC_l$.
\end{lemma}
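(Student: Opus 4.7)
The strategy is to localize on the target to a neighborhood where all multisingularities are Mather ones of S-codimension at most $k$, apply the hypothesis there, and extend back using that low-codimension strata cannot contribute to cohomology in small degree.

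Let $f : M \to N$ be an arbitrary map in $\CC_l$. Define
\[
V \;=\; \bigcup_{\scodim(\mzeta) > k} \overline{\Tlocus{\mzeta}{f}} \;\subset\; N,
\]
and set $U = N \setminus V$ and $M' = f^{-1}(U)$. Because $f$ is locally stable, every multigerm of $f$ is a trivial unfolding of the prototype of the corresponding multisingularity, hence $\codim_N \Tlocus{\mzeta}{f} = \tcodim(\mzeta) = \scodim(\mzeta) + l \ge k + l + 1 > k$ whenever $\scodim(\mzeta) > k$. Consequently $V$ has complex codimension $> k$ in $N$. By construction, the restriction $f' := f|_{M'} \colon M' \to U$ has only multigerms whose prototype has S-codimension $\le k$, i.e.\ $f' \in \CC^{\Ma,k}_l$ (finiteness of $f'$ is inherited from that of $f$).

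The two sides of the identity to be established are compatible with restriction to~$U$. On one hand, the characteristic classes $c_\lambda(f)$ and $s_\lambda(f)$ restrict naturally (the latter using that pushforward along $f$ commutes with pullback along the open embedding $U \hookrightarrow N$), so $B_\multi(f)|_U = B_\multi(f')$. On the other hand, since $\multi$ is Mather with $\scodim(\multi) \le k$, we have $\Tlocus{\multi}{f} \cap U = \Tlocus{\multi}{f'}$, so Proposition~\ref{pro:csm}(3) gives $\ssm(\Tlocus{\multi}{f})|_U = \ssm(\Tlocus{\multi}{f'})$. Applying the hypothesis of the lemma to $f' \in \CC^{\Ma,k}_l$ yields the identity \eqref{w:Interpolation} for $f$ after restriction to $U$, in $\coh^{\le k}(U)$.

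It remains to upgrade this equality from $\coh^{\le k}(U)$ to $\coh^{\le k}(N)$. This follows from the long exact sequence of the pair $(N,V)$: since $V \subset N$ is a closed constructible subset of complex codimension $> k$, the local cohomology groups $H^i_V(N;\QQ)$ vanish for $i \le 2k$, so the restriction $\coh^{\le k}(N) \to \coh^{\le k}(U)$ is injective, and the identity we need follows.

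The main point requiring care is the codimension estimate for $V$, which is where local stability of $f \in \CC_l$ enters decisively; once that is in hand, naturality of the $\ssm$ class and of characteristic classes under open restriction, combined with the standard vanishing of local cohomology in low degree, complete the reduction.
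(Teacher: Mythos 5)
Your proof is correct and follows essentially the same strategy as the paper's: restrict to an open subset of the target over which all occurring multisingularities have S-codimension at most $k$, invoke the hypothesis there, and extend using that the complement has complex codimension $>k$ so that restriction to the open part is injective in cohomological degrees $\le k$. The only differences are cosmetic --- the paper uses the open set defined by $\tcodim(\mzeta)\le k$ together with the Borel--Moore exact sequence $\coh_\bullet(F_k)\to\coh^\bullet(N)\to\coh^\bullet(U_k)\to 0$, whereas you use the slightly larger open set defined by $\scodim(\mzeta)\le k$ together with vanishing of local cohomology $H^i_V(N)$ in low degree --- but both routes yield the same injectivity and the same conclusion.
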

\begin{proof}
    Let $f:M\to N$ be a map in $\CC_l$ and $D$ denote the difference $A_{\multi_0}(f)-\ssm(\Tlocus{\multi_0}{f})_{|\le k} \cdot |\Aut({\multi_0})|$. Note that $D=D|_{\le k}$. Consider the open subset
    $$U_k=\bigcup_{\tcodim(\mzeta) \le k} \Tlocus{\mzeta}{f}\subset N\,,$$
    and its complement $F_t$. The restricted map map $f|_{f^{-1}(U_k)}$ is in $\CC^{\Ma,k}_l$, therefore $D|_{U_k}=0$. The short exact sequence
    $$\coh_\bullet(F_k)\xto{i_*}\coh^\bullet(N)\to\coh^\bullet(U_k)\to 0$$
    shows that $D$ is in the image of $i_*$. Codimension of the set $F_k$ is at least $k+1$, cf. Section \ref{sec:Mather} , therefore $D|_{\le k}=0$.
\end{proof}
We can restrict the considered class of maps further to prototypes of multisingularities.
\begin{pro} \label{pro:Interp2}
	Let $\multi$ be a Mather T-multisingularity and $B_\multi \in \QQ[\svar]$ a polynomial of degree $k$. Choose $t\le k$. The following conditions are equivalent
	\begin{enumerate}
		\item The polynomial $B_\multi$ satisfies \eqref{w:Interpolation} for any ${f\in \CC^{\Ma,t}_l}$.
		\item The polynomial $B_\multi$ satisfies \eqref{w:Interpolation} for the prototypes of all nonempty multisingularities $\mzeta$, such that $\tcodim(\mzeta)\le t$ in the $\TT_\mzeta$-equivariant cohomology.
	\end{enumerate}
\end{pro}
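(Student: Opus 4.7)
The easy direction (1) $\Rightarrow$ (2) should be essentially immediate: as discussed in Section~\ref{sec:CC_l}, for any multisingularity $\mzeta$ with $\tcodim(\mzeta)\le t$, the equivariant map $B_{\TT_\mzeta} p_\mzeta$ (in a finite-dimensional approximation of $B\TT_\mzeta$) belongs to $\CC^{\Ma,t}_l$, so applying condition (1) to this map yields condition (2).

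For the converse I would argue by induction on $t$, mirroring the proof of Lemma~\ref{lem:Interp1}. Given $f\in\CC^{\Ma,t}_l$, set
\[
D \;=\; B_\multi(f) - \ssm(\Tlocus{\multi}{f})|_{\le k}\cdot |\Aut(\multi)| \;\in\;\coh^{\le k}(N).
\]
Let $F_t\subset N$ be the closed subset of points whose multisingularity has T-codimension exactly $t$, and $U_t$ its complement. The restriction $f|_{f^{-1}(U_t)}$ lies in $\CC^{\Ma,t-1}_l$, so by the inductive hypothesis (with base case $t=0$ reducing to the prototype $\mzeta=A_0$) the class $D$ vanishes on $U_t$. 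The long exact sequence of the pair $(N,U_t)$ then forces $D$ to be supported on $F_t$, which is a disjoint union of open submanifolds indexed by the codimension-$t$ multisingularities.

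The crucial---and hardest---step is to kill $D$ in a tubular neighborhood of each stratum $\Tlocus{\mzeta}{f}\subset F_t$ with $\tcodim(\mzeta)=t$. By the defining property of $\CC^{\Ma,t}_l$, around any $y\in\Tlocus{\mzeta}{f}$ the multigerm of $f$ is a trivial unfolding of the prototype $p_\mzeta$, so a neighborhood of $y$ in $N$ admits a classifying map to (a finite-dimensional approximation of) the target of $B_{\TT_\mzeta}p_\mzeta$. This classifying map pulls back both the Chern and Landweber--Novikov classes that enter $B_\multi(f)$ and the $\ssm$ class of $\Tlocus{\multi}{f}$, as justified by Propositions~\ref{pro:sum1} and~\ref{pro:sum2} together with the pull-back compatibility of $\ssm$ classes recalled in Section~\ref{s:ssm}. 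Hence $D$ is locally the pull-back of the corresponding equivariant difference on the prototype, which vanishes in cohomological degree $\le k$ by assumption (2).

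The main obstacle will be making the classifying-map argument watertight: one must verify that the right--left equivalence with a trivial unfolding is compatible with the formation of both sides of the identity---in particular that the $\ssm$ class of a multisingularity locus pulls back as expected under a trivial unfolding---and that the vanishing in equivariant cohomology of the prototype really propagates from the fibre over the classifying point to an entire tubular neighborhood in $N$. The analogous compatibilities for fundamental classes in classical Thom polynomial theory (cf.~\cite{rrtp,TOmult}) should adapt to the $\ssm$-setting without essential change.
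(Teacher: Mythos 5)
Your approach is genuinely different from the paper's: you attempt a direct induction on $t$ for an arbitrary $f\in\CC^{\Ma,t}_l$, killing the difference class $D$ locally near each codimension-$t$ stratum via local classifying maps to the prototype. The paper instead invokes Sz\H ucs' \emph{universal singular map} $F_\tau$: it first establishes \eqref{w:Interpolation} once and for all on $F_\tau$ (where the cohomology is controlled block-by-block via the Euler condition, a Gysin sequence and Mayer--Vietoris), and then transports the identity to an arbitrary $f$ by functoriality, since every such $f$ classifies as a pullback of $F_\tau$.

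There is a genuine gap in your local-to-global step. Knowing that $D$ restricts to zero on the open set $U_t$ and on a tubular neighborhood $T_\mzeta$ of each codimension-$t$ stratum $\Tlocus{\mzeta}{f}$ does \emph{not} imply $D=0$ on $N$: a cohomology class can vanish on every member of an open cover yet be globally nonzero (think of the generator of $H^n(S^n)$ and the two-hemisphere cover). To upgrade the local vanishing you would need to argue as follows: $D|_{U_t}=0$ places $D$ in the image of the Gysin push-forward $i_*:\coh_\bullet(F_t)\to\coh^\bullet(N)$, say $D=i_*(\alpha)$; restricting to the tubular neighborhood turns $i_*$ into multiplication by the Euler class of the normal bundle of $F_t$; only if that Euler class is a non-zero-divisor does $D|_T=0$ force $\alpha=0$ and hence $D=0$. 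This injectivity is precisely the \emph{Euler condition} that the paper supplies via quasihomogeneity of Mather prototypes, and it is the missing ingredient in your argument --- the difficulty is not (only) the compatibility of $\ssm$ classes with right--left equivalence that you flag, but the passage from local triviality to a global identity. Without either the Euler-class injectivity or a single global classifying map (which the universal singular map provides), the induction does not close.

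A smaller inaccuracy: Propositions~\ref{pro:sum1} and~\ref{pro:sum2} concern disjoint unions $f\sq g$, not pullbacks along classifying maps, so they do not directly justify the pullback compatibility you invoke; the relevant naturality is that of $\ssm$ under open restriction and of the Borel construction, which still needs to be spelled out.
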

\begin{proof}
	The proof of this statement is a standard argument in global singularity theory, spelled out in detail (in slightly different circumstances) in \cite[Sect.~6]{rrtp}, \cite[Sect.~3]{LFRRobstructions}. The argument depends on a construction called the {\em universal singular map}, pioneered by A. Sz\H ucs \cite{szucs1, szucs2, szucs3, szucs4, szucs5, szucs6}; we will use the version in \cite{RRSzA}. Since the universal singular map is quite a sophisticated object, we will only sketch it, and refer the reader to the listed references.

Let $\tau$ be the finite set of (necessarily Mather) T-multisingularities with $\tcodim \leq t$. 
For each $\multi \in \tau$ we choose a prototype $p_\multi$, and let its maximal compact symmetry group be $G_\multi$ with source and target representations $\rhoS$ and $\rhoT$. For a monosingularity these are ordinary representations, but for a proper multisingularity $\rhoS$ is acting on $\C^b \sqcup \C^b \sqcup \ldots \sqcup \C^b$ which may include permutations of the components. Consider the Borel construction 
\[
B_{G_\multi} p_\multi : B_{G_\multi}(\C^b \sqcup \C^b \sqcup \ldots \sqcup \C^b) \to B_{G_\multi} \C^{b+l},
\]
(cf. Section \ref{sec:CC_l}) and call it the {\em block corresponding to $\multi$}. The universal singular map $F_\tau: X_\tau \to Y_\tau$ is `glued together' in a particular way from the blocks corresponding to all $\multi \in \tau$. It has the following properties:
\begin{itemize}
\item Every stable map whose multisingularities belong to $\tau$ arises (essentially) uniquely as a pullback from $F_\tau$. The functoriality of this pullback implies that any identity involving characteristic classes of singularity loci and characteristic classes of the map holds for $F_\tau$ if and only if it holds for all stable maps with $T$-singularities in $\tau$.
\item The gluing of the blocks satisfies the so-called Euler condition. Through a combination of a Gysin sequence and a Mayer–Vietoris argument, this implies that a cohomology class in $X_\tau$ (respectively $Y_\tau$) vanishes if and only if it vanishes in the source (respectively target) of each block, for all $\multi \in \tau$.
\end{itemize}
Condition \eqref{w:Interpolation} for $F_\tau$ restricted to a $\multi$-block is the same as \eqref{w:Interpolation} applied to $p_\multi$ in $G_\multi$-equivariant cohomology. Since $\coh^*(BG_\multi)$ is a subring of $\coh^*(B\TT_{\multi})$ the proposition is proved.
\end{proof}

\begin{rem}
The Euler condition mentioned in the proof follows from the experimental fact asserting that in the Mather range every singularity (and hence also every multisingularity) admits a quasihomogeneous prototype \cite[Thm.~7.6]{MNB}.
\end{rem}

The way the $A_{\multi}$ polynomials are constructed from the $\KazT{\multi}$ polynomials implies that we can restrict further to prototypes of monosingularities.
\begin{lemma} \label{lem:unique3}
    Let and $f$ and  $g$ be maps in $\CC_l$. Suppose that for every $\multi\subset\multi_0$ the polynomial $A_{\multi}$ satisfy \eqref{w:Interpolation} for $f$ and $g$.
	Then for every $\multi\subset \multi_0$ the polynomial $A_{\multi}$ satisfy \eqref{w:Interpolation} for $f\sq g$.
\end{lemma}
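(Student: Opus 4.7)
The plan is to encode the interpolation hypothesis and the conclusion as an identity of generating series, and then exploit the multiplicativity coming from both the exponential definition of the $A_\multi$ and the disjoint-union formulas for $\ssm$-classes and Landweber--Novikov classes. For any $h\in\CC_l$ I introduce
$$L(h) := \sum_{\multi\subset \multi_0} \frac{\KazT{\multi}(h)}{|\Aut(\multi)|}\, t^\multi, \qquad N_h := \sum_{\multi} \ssm(\Tlocus{\multi}{h})\, t^\multi,$$
so that the defining formula of the $A_\multi$ becomes $\sum_\multi \frac{A_\multi(h)}{|\Aut(\multi)|}\, t^\multi = \exp(L(h))|_{\le k}$, and the hypothesis for $h\in\{f,g\}$ is precisely the single equality $\exp(L(h))|_{\le k} = N_h|_{\le k}$.

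First I will establish the additivity
$$L(f\sq g) = \pi_1^* L(f) + \pi_2^* L(g).$$
Since each $\KazT{\multi}$ is $\svar$-linear (by hypothesis of the interpolation theorem) and Proposition~\ref{pro:sum2} gives $s_\lambda(f\sq g) = \pi_1^* s_\lambda(f) + \pi_2^* s_\lambda(g)$, this follows term by term. Exponentiating,
$$\exp(L(f\sq g)) = \pi_1^* \exp(L(f))\cdot \pi_2^* \exp(L(g)).$$
In parallel, Proposition~\ref{pro:sum1} repackages as $N_{f\sq g} = \pi_1^* N_f \cdot \pi_2^* N_g$, which is exactly the $\ssm$-analogue of the multiplicativity just established on the exponential side.

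To conclude, I will apply a truncation step. Because $\pi_1^*$ and $\pi_2^*$ preserve the cohomological grading, the $\le k$ part of a product $\pi_1^* X \cdot \pi_2^* Y$ depends only on the $\le k$ parts of $X$ and $Y$. Applying this to both multiplicative identities and using the hypothesis for $f$ and $g$ to replace each $\exp(L(h))|_{\le k}$ with $N_h|_{\le k}$, I obtain $\exp(L(f\sq g))|_{\le k} = N_{f\sq g}|_{\le k}$. Reading off the coefficient of $t^\multi$ for $\multi\subset \multi_0$ yields the required identity $A_\multi(f\sq g)=|\Aut(\multi)|\cdot \ssm(\Tlocus{\multi}{f\sq g})|_{\le k}$.

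I do not expect any serious obstacle. The two points that demand care are the $\svar$-linearity of $\KazT{\multi}$, which is what turns the substitution of a sum of classes into a sum of substitutions, and the combinatorial observation that $\multi_1+\multi_2=\multi\subset \multi_0$ forces both summands to be submultisets of $\multi_0$, so only those $\KazT{\mzeta}$ on which we have hypotheses enter the computation. The truncation step itself is automatic once one notes that all classes involved sit in nonnegative cohomological degree and pullbacks along $\pi_1,\pi_2$ do not lower degree.
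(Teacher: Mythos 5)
Your proof is correct and essentially the same as the paper's: both exploit the $\svar$-linearity of $\KazT{\multi}$ together with Propositions~\ref{pro:sum1} and \ref{pro:sum2} to turn the exponential definition of the $A_\multi$ into a multiplicative identity for $f\sq g$. The paper phrases the computation as a formal polynomial identity $A_\multi(\svar_1+\svar_2)=\sum_{\multi_1+\multi_2=\multi}A_{\multi_1}(\svar_1)A_{\multi_2}(\svar_2)\cdot|\Aut(\multi)|/(|\Aut(\multi_1)||\Aut(\multi_2)|)$ (modulo degree $>k$) and then substitutes, whereas you work directly in $\coh^\bullet(N_1\times N_2)$ via the pullbacks $\pi_1^*,\pi_2^*$, but the content is the same; your only minor imprecision is writing the hypothesis as the generating-series equality $\exp(L(h))|_{\le k}=N_h|_{\le k}$, which strictly speaking is known only for the coefficients of $t^\multi$ with $\multi\subset\multi_0$ --- though, as you note, the submultiset constraint $\multi_1+\multi_2=\multi\subset\multi_0$ means only those coefficients ever enter the computation.
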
 
\begin{proof}
    Let $\svar_1$ and $\svar_2$ be two sets of varibles. The series $\KazT{\multi}$ are linear, therefore
    %equation \eqref{w:induction} implies
    $$\sum_{\multi\in X_T}
	\frac{A_{\multi}(\svar_1+\svar_2)t^\multi}{|\Aut(\multi)|} =
	\exp \bigg(
	\sum_{\multi\subset\multi_0}
	\frac{\KazT{\multi}(\svar_1)+\KazT{\multi}(\svar_2)}{|\Aut(\multi)|}\cdot t^\multi
	\bigg)\bigg|_{\le k}
    =
    \sum_{\multi\in X_T}
	\frac{A_{\multi}(\svar_1)t^\multi}{|\Aut(\multi)|}
    \cdot
    \sum_{\multi\in X_T}
	\frac{A_{\multi}(\svar_2)t^\multi}{|\Aut(\multi)|}$$
    Computing coefficient of $t^\multi$ for $\multi\subset\multi_0$ we get
    $$A_{\multi}(\svar_1+\svar_2)=\sum_{\multi_1+\multi_2=\multi}A_{\multi_1}(\svar_1)\cdot A_{\multi_2}(\svar_2) \cdot\frac{|\Aut(\multi)|}{|\Aut(\multi_1)|\cdot|\Aut(\multi_2)|}$$
    By Propositions \ref{pro:sum1} and \ref{pro:sum2} this finishes the proof of the lemma.
\end{proof}
\begin{cor}
    The proof of Lemma \ref{lem:unique3} works also in the equivariant setting. Let $\TT_f$ and $\TT_g$ be algebraic tori, such that the map $f$ is $\TT_f$-equivariant and $g$ is $\TT_g$-equivariant. Suppose that $A_{\multi}$ satisfy condition \eqref{w:Interpolation} for $f$ in $\coh^\bullet_{\TT_f}$ and for $g$ in $\coh^\bullet_{\TT_f}$. Then it satisfies condition \eqref{w:Interpolation} for $f\sq g$ in $\TT_f\times \TT_g$-equivariant cohomology.
\end{cor}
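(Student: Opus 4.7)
The plan is to repeat the argument of Lemma~\ref{lem:unique3} verbatim, replacing each ingredient with its equivariant counterpart. The algebraic manipulation that extracts the identity
\[
A_{\multi}(\svar_1+\svar_2)=\sum_{\multi_1+\multi_2=\multi}
A_{\multi_1}(\svar_1)\cdot A_{\multi_2}(\svar_2) \cdot
\frac{|\Aut(\multi)|}{|\Aut(\multi_1)|\cdot|\Aut(\multi_2)|}
\]
from the exponential formula is purely formal and does not depend on the coefficient ring, so it continues to hold when the polynomials are evaluated in $\coh^\bullet_{\TT_f\times\TT_g}$. The only real content is to check that Propositions~\ref{pro:sum1} and~\ref{pro:sum2} remain valid in the equivariant setting.

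First I would verify the equivariant analogue of Proposition~\ref{pro:sum1}. Under the diagonal action of $\TT_f\times\TT_g$, the source and target of $f\sq g$ decompose as $\TT_f\times\TT_g$-equivariant disjoint unions, and the stratifications by multisingularity loci respect this product structure: the decomposition
\[
\Tlocus{\multi}{f\sq g}=\bigsqcup_{\multi_1+\multi_2=\multi}\Tlocus{\multi_1}{f}\times\Tlocus{\multi_2}{g}
\]
is $\TT_f\times\TT_g$-equivariant. Ohmoto's equivariant $\ssm$ class (see \cite{OhmotoCamb}) satisfies the additivity and K\"unneth-type properties listed in Proposition~\ref{pro:csm} equivariantly, which is exactly what was used in the proof of Proposition~\ref{pro:sum1}. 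The same applies to the source version.

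Next I would check the equivariant version of Proposition~\ref{pro:sum2}. This amounts to the identity $T_{f\sq g}|_{M_1\times N_2}\simeq \tilde{\pi}_1^*T_f$ of $\TT_f\times\TT_g$-equivariant vector bundles, which is immediate from the definition of $f\sq g$; the $\TT_g$-action on $M_1\times N_2$ factors through $N_2$, while the relative tangent bundle is pulled back from $M_1$. The equivariant Landweber--Novikov classes $s_\lambda(f\sq g)$ then satisfy the expected additivity, because equivariant proper pushforward is linear and compatible with the product decomposition on each stratum.

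With these two equivariant facts in hand, the proof of Lemma~\ref{lem:unique3} applied to $f\sq g$ goes through unchanged: substitute $\svar_1=\svar(f)$ and $\svar_2=\svar(g)$ in the identity above, use the equivariant versions of Propositions~\ref{pro:sum1} and~\ref{pro:sum2} to identify the right-hand side with $\ssm(\Tlocus{\multi}{f\sq g})\cdot |\Aut(\multi)|$, and conclude that $A_{\multi}$ satisfies \eqref{w:Interpolation} for $f\sq g$ in $\TT_f\times\TT_g$-equivariant cohomology. The main potential subtlety I anticipate is bookkeeping in the Borel construction --- making sure that $B_{\TT_f\times\TT_g}(f\sq g)$ restricts correctly to the two factors $B_{\TT_f}f$ and $B_{\TT_g}g$ so that pullback via the diagonal (or graph) is interpreted in the product classifying space --- but this is a direct consequence of the fact that $EG_1\times EG_2$ is a model for $E(G_1\times G_2)$, so no essentially new argument is required.
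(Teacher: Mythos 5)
Your proposal is correct and takes essentially the same route the paper intends: the paper itself offers no separate proof, simply asserting that the argument of Lemma~\ref{lem:unique3} carries over equivariantly, and your write-up fleshes out exactly the two checkpoints (equivariant versions of Propositions~\ref{pro:sum1} and \ref{pro:sum2} via Ohmoto's equivariant $\ssm$ class, plus the $E(G_1\times G_2)\simeq EG_1\times EG_2$ bookkeeping) that make that assertion legitimate. The only small slip worth noting is not in your argument but in the corollary's statement as quoted, where the hypothesis for $g$ should read $\coh^\bullet_{\TT_g}$ rather than $\coh^\bullet_{\TT_f}$; your proof correctly treats $f$ and $g$ in the cohomology of their respective tori.
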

The above considerations may be summarized in the following result.
\begin{cor} \label{cor:Interp}
	Choose $t\le k$.
	Suppose that for every $\multi\subset\multi_0$ and every nonempty singularity $\zeta$ such that $\tcodim(\zeta)\le t$ the polynomial $A_{\multi}$ satisfies condition \eqref{w:Interpolation} for the prototype $p_\zeta$ in the equivariant cohomology. Then it satisfies condition \eqref{w:Interpolation} for any $f\in\CC^{\Ma,t}_l$.
\end{cor}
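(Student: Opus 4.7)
The plan is to combine the previously established Proposition~\ref{pro:Interp2} with the equivariant disjoint‐union principle of Lemma~\ref{lem:unique3} (and its Corollary). By Proposition~\ref{pro:Interp2}, it suffices to verify that, under the hypothesis of the corollary, the polynomial $A_\multi$ satisfies \eqref{w:Interpolation} for the prototype $p_\mzeta$ of every (possibly multi-component) nonempty T-multisingularity $\mzeta$ with $\tcodim(\mzeta)\le t$, in $\TT_\mzeta$-equivariant cohomology. The hypothesis gives us this only in the monosingularity case, so the task is to bootstrap from monosingularities to multisingularities.

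The key observation is the one already flagged in Section~\ref{s:union}: by construction (Section~\ref{sec: proto}), the prototype $p_\mzeta$ of a multisingularity $\mzeta=\{\eta_1,\ldots,\eta_k\}$ coincides (up to the natural equivalence on multigerms) with the disjoint-union map
\[
p_{\eta_1}\sq p_{\eta_2}\sq \cdots \sq p_{\eta_k},
\]
equipped with its natural product symmetry $\TT_{\eta_1}\times\cdots\times\TT_{\eta_k}$. Each factor $p_{\eta_i}$ has $\tcodim(\eta_i)\le \tcodim(\mzeta)\le t$, so the hypothesis of the corollary applies to every $p_{\eta_i}$ individually. Applying the equivariant form of Lemma~\ref{lem:unique3} (stated as the Corollary immediately following that lemma) inductively in $k$, we conclude that $A_\multi$ satisfies \eqref{w:Interpolation} for the disjoint union $p_{\eta_1}\sq\cdots\sq p_{\eta_k}$ in $(\TT_{\eta_1}\times\cdots\times\TT_{\eta_k})$-equivariant cohomology.

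Next, one needs to descend from this product-torus equivariance to $\TT_\mzeta$-equivariance. This is automatic: the maximal torus $\TT_\mzeta$ of the compact symmetry group of $p_\mzeta$ acts on the source and target through a homomorphism into $\TT_{\eta_1}\times\cdots\times\TT_{\eta_k}$ (commuting with any component permutations), so restriction induces a ring map $\coh^*_{\TT_{\eta_1}\times\cdots\times\TT_{\eta_k}}(\pt)\to\coh^*_{\TT_\mzeta}(\pt)$ intertwining all the relevant characteristic classes. Thus \eqref{w:Interpolation} for $p_\mzeta$ in the larger equivariant cohomology implies it in the $\TT_\mzeta$-equivariant cohomology. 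Having now verified the hypothesis of Proposition~\ref{pro:Interp2} for every nonempty $\mzeta$ with $\tcodim(\mzeta)\le t$, the proposition yields that $A_\multi$ satisfies \eqref{w:Interpolation} for every $f\in \CC^{\Ma,t}_l$, as claimed.

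The only genuinely non-formal point is the identification of the prototype of a multisingularity with the iterated $\sq$ of monosingularity prototypes, compatibly with symmetries; this is exactly the content of Section~\ref{sec: proto} and the remarks in Section~\ref{s:union}. Everything else is a formal consequence of Lemmas~\ref{lem:unique3} and Propositions~\ref{pro:Interp2}, \ref{pro:sum1}, \ref{pro:sum2}. I do not expect any serious obstacle; the main delicacy is bookkeeping the equivariance, in particular ensuring that the map $\TT_\mzeta\to \TT_{\eta_1}\times\cdots\times\TT_{\eta_k}$ is well-defined when $\mzeta$ contains repeated monosingularities (in which case $\TT_\mzeta$ lives inside the diagonal subtorus fixed by the permutation action), but this is harmless because restriction to a subtorus still preserves vanishing of cohomology classes.
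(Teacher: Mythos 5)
Your proof is correct and follows the same route the paper intends: the paper gives no formal proof of this corollary, stating it merely as a summary of Proposition~\ref{pro:Interp2} combined with Lemma~\ref{lem:unique3} and its equivariant corollary, which is exactly the combination you carry out (using the observation that the multisingularity prototype is the iterated $\sq$ of the monosingularity prototypes). One small correction to your closing remark: when $\mzeta$ has repeated components the maximal torus of the wreath-product symmetry group $G_\eta \wr S_k$ is still the full product $\TT_{\eta_1}\times\cdots\times\TT_{\eta_k}$ (the permutation part being finite), not a diagonal subtorus, so the descent worry you flag never actually arises --- though, as you correctly note, restriction along any torus homomorphism would have sufficed in any case.
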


Now we are ready for the final steps in the proof of Theorem \ref{thm:Interpolation}.

%\begin{proof} [Proof of Theorem \ref{thm:Interpolation}]
	Suppose that the polynomials $A_{\multi}$ satisfy conditions \eqref{i1} and \eqref{i2}. We will inductively prove that they satisfy condition \eqref{w:Interpolation} for any map in $\CC_l^{\Ma,t}$. For $t=k$ this will prove a result due to Lemma \ref{lem:Interp1}\\
	{\bfseries The case $t=l$:} By Corollary \ref{cor:Interp} we have to show that for every $\multi\subset \multi_0$ the polynomial $A_{\multi}$ satisfies condition \eqref{w:Interpolation} for the prototype of $A_0$ singularity. \\
	The prototype of $A_0$ singularity is $i:\pt \to \C^l$. We have $\tcodim (A_0)=l$ and $\TT:=\TT_{A_0}\simeq(\C^*)^l$. Let
	$$D=A_{\multi}(i)-\ssm(\Tlocus{\multi}{i})_{|\le k} \cdot |\Aut(\multi)| \in \coh_{\TT}^\bullet(\C^l)\,.$$
	Our goal is to show that $D=0$. If $\multi=A_0$ then it is true by condition \eqref{i1}. Assume that $\multi\neq A_0$.\\
	All Landweber-Novikov classes are supported on the image of $i$, therefore $A_{\multi}(i)_{|\C^l -0}$ is the constant term of the polynomial $A_{\multi}$. By the inductive construction of $A_{\multi}$ this is $1$ if $\multi=\varnothing$ and zero otherwise. \\
	On the other hand if $\multi\notin\{\varnothing;A_0\}$ then $\Tlocus{\multi}{i}=\varnothing$ and $\ssm(\Tlocus{\multi}{i})=0$. If $\multi=\varnothing$ then
	$$\ssm(\Tlocus{\multi}{i})_{|\C^l -0}=1-\ssm(0\subset \C^l)_{|\C^l -0}=1\,. $$
	In both cases we obtain $D_{|\C^l -0}=0$. The short exact sequence
	$$
	\coh_{\TT}^\bullet(0)\xto{i_*}\coh_{\TT}^\bullet(\C^l)\to\coh_{\TT}^\bullet(\C^l-0) \to 0\,,
	$$
	implies that $D$ lies in the image of $i_*$. Therefore
	$$D|_r=0\in \coh_{\TT}^r(\C^l)$$
	for $r\in\{0,1,\dots,l-1\}$. We have $\multi\neq A_0$, so condition \eqref{i2} implies that it also vanishes for $r\in\{l,\dots, k\}$. Thus $D=0$.
	\\
	{\bfseries Inductive step:} By Corollary \ref{cor:Interp} we have to show that for every $\multi$ the polynomial $A_{\multi}$ satisfies equivariant condition \eqref{w:Interpolation} for the prototypes of monosingularities of target codimension $t$. \\
	Let $\zeta$ be a singularity, such that $\tcodim(\zeta)=t$. If $\multi=\zeta$ then we are done due to condition \eqref{i1}. Suppose that $\multi\neq\zeta$. Let $p:V\to W$ be the prototype of $\zeta$. Let
	$$D=A_{\multi}(p)-\ssm(\Tlocus{\multi}{p})|_{\le k} \cdot |\Aut(\multi)| \in \coh_{\TT_\zeta}^{\le k}(W)\,.$$
	Consider the restricted map $p'=p_{|V-\{0\}}$
	$$p':V-\{0\}\to W-\{0\}\,.$$
	It has only singularities of codimension smaller than $t$. By the inductive assumption, the polynomial $A_{\multi}$ satisfies condition \eqref{w:Interpolation} for $p'$. Thus $D_{|W-\{0\}}=0$. The  short exact sequence
	$$
	\coh_{\TT_\zeta}^\bullet(0)\xto{i_*}\coh_{\TT_\zeta}^\bullet(W)\to\coh_{\TT_\zeta}^\bullet(W-\{0\}) \to 0\,,
	$$
	implies that $D$ lies in the image of $i_*$. Therefore
	$$D|_r=0\in \coh_{\TT_\zeta}^r(W)$$
	for $r\in\{0,1,\dots,\dim W-1\}$. We have $\multi\neq\zeta$ and $t=\dim W$, so condition \eqref{i2} implies that $D|_r$ vanishes also for $r\in\{\dim W,\dots, k\}$. It follows that $D=0$ and the polynomial $A_{\multi}$ satisfies condition \eqref{w:Interpolation} for $p$.
%\end{proof}

%\JK{Is it obvious that $p'$ has easier singularities than $p$? Should we prove it? This uses the fact that we are in Mather dimensions.} It is obvious, no need.

\section{Application to Mond's conjecture}\label{s:Mond}
\subsection{Mond's conjecture} 
Let $f:(\C^m,0)\to (\C^{m+1},0)$ be a map germ in the Mather region. Explicitly, this means that $m\le M(1)=14$. The celebrated Mond conjecture (for a history see \cite[Rem.~8.1]{MNB}) compares two invariants associated with such a germ: image Milnor number and $\A_e$-codimension. 

%Our results can be used to obtain a formula for the image Milnor number of a quasihomogeneous germ.

The $\A_e$-codimension of the germ $f$, denoted $\A_e$-$\codim(f)$, is one of the standard notions of singularity theory. It is defined as a dimension of a certain vector space associated to the germ $f$, see \cite[Def.~3.6 and Cor.~3.2]{MNB}.
A germ is $\A$-finite if it has a finite $\A_e$-codimension. It is stable if and only if its $\A_e$-codimension is zero. %ref to section about singularities?

 Let $f_t$ be a stable perturbation of $f$, see \cite[Sect.~ 8.3]{MNB} for a precise definition. The image Milnor number $\mu_I(f)$ describes the geometry of its image. This image has the homotopy type of a wedge of $m$-spheres \cite[Prop.~8.3]{MNB}. The image Milnor number $\mu_I(f)$ is the number of spheres, alternatively
\begin{align} \label{w:Milnor}
    \mu_I(f):=(-1)^m(\chi(\im f_t)-1)\,.
\end{align}
The Mond conjecture states that for an $\A$-finite germ $f:(\C^m,0)\to (\C^{m+1},0)$ in the Mather range we have
$$
\mu_I(f) \ge \A_e\text{-}\codim(f)\,.
$$
Moreover if $f$ is quasihomogeneous, i.e. it is stabilized with a linear $\C^*$-action with positive weights, then
$$
\mu_I(f) = \A_e\text{-}\codim(f)\,.
$$
\subsection{Formula for the image Milnor number}
Let $f:\C^m\to \C^{m+1}$ be a quasihomogeneous $\A$-finite map in the Mather range. The image Milnor number $\mu_I(f)$ may be computed from the torus weights on the source and target (usually called {\em weights} and {\em degrees}). Explicit formulas were obtained for $m=2$ in \cite{Mond}, $m=3$ in \cite{ohmotoSMTP}, and $m=4,5$ in \cite{PallaresPenafort}. We generalize these formulas for all $m$ up to the theoretical bound $m\le 14$. Our approach follows that of Ohmoto \cite{ohmotoSMTP} 
%, who established a connection between the image Milnor number and Thom polynomials, 
and depends on our calculation of the Master Series for $l=1$ in Example~ \ref{ex:MS}.

Ohmoto’s method \cite[Thm. 6.5]{ohmotoSMTP} provides a Thom polynomial computing the 
$\ssm$ class of the image. This result was proved only for 
$m\leq 5$ and for maps with Morin singularities. We present the following generalization.
\begin{pro}
	There is a polynomial
	$\ThT{im}\in \QQ[\svar]$
	%:=1-\ThTMa{\varnothing}{14} \in \QQ[\svar]\,.$
	such that for any map $f:M\to N$ in $\CC_1$ we have
	$$\ThT{im}(f)=\ssm(\im(f))|_{\le 15}\in \coh^{\le 15}(N).$$
\end{pro}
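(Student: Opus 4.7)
The plan is to reduce the assertion to the existence of a truncated Master Series polynomial for $l=1$. First, since $N = \im(f) \,\dot\sqcup\, \Tlocus{\varnothing}{f}$ as constructible subsets, the additivity of $\ssm$ (Proposition~\ref{pro:csm}(1)) together with $\ssm(N) = 1$ gives
\begin{equation*}
    \ssm(\im(f)) \;=\; 1 - \ssm(\Tlocus{\varnothing}{f}).
\end{equation*}
Hence it suffices to produce a polynomial $A \in \QQ[\svar]$ of cohomological degree at most $15$ such that $A(f) = \ssm(\Tlocus{\varnothing}{f})|_{\le 15}$ for every $f \in \CC_1$; then $\ThT{im} := 1 - A$ completes the proof.

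To construct $A$, I would apply the Interpolation Theorem~\ref{thm:Interpolation} in the special case $\multi_0 = \varnothing$ with the degree bound $k = M(1) + l = 15$. As noted in Example~\ref{ex:MS}, the statement for $\multi_0 = \varnothing$ collapses to a purely algebraic interpolation problem: one must find a linear polynomial $\MSlog \in \QQ[\svar]$ of cohomological degree at most $15$ such that for every monosingularity $\zeta$ with $\tcodim(\zeta) \le 15$ and every $r \in \{\tcodim(\zeta), \ldots, 15\}$,
\begin{equation*}
    \bigl(\exp(\MSlog)(p_\zeta) \cdot c_\bullet(TN_\zeta)\bigr)\bigr|_{r} \;=\; 0 \;\in\; \coh^{r}_{\TT_\zeta}(\pt).
\end{equation*}
The list of relevant $\zeta$ is finite (the 32 Mather algebras for $l=1$, see Section~\ref{sec:Mather}), and for each of them the equivariant Chern classes and the Euler class of $\rho^S_\zeta$ are explicit polynomials in the torus weights, so every condition above becomes a finite linear constraint on the coefficients of $\MSlog$. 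Setting $A := \exp(\MSlog)\bigr|_{\le 15}$ then yields a polynomial with $A(f) = \ssm(\Tlocus{\varnothing}{f})|_{\le 15}$ for all $f \in \CC_1^{\Ma}$.

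Finally, the extension from $\CC_1^{\Ma}$ to the full class $\CC_1$ is carried out verbatim as in Lemma~\ref{lem:Interp1}: the difference $D = A(f) - \ssm(\Tlocus{\varnothing}{f})|_{\le 15}$ vanishes on the open subset $U_{15} \subset N$ lying above multisingularities of codimension at most $15$, and its complement has codimension strictly greater than $15$ by the Mather estimate, so the image of the Gysin map into $\coh^{\le 15}(N)$ is zero and $D = 0$. The main obstacle is not conceptual but computational: one must actually verify that the interpolation system for $\MSlog$ admits a solution up to degree $15$, which amounts to an explicit finite computer calculation using the complete list of Mather monosingularities together with their maximal tori and prototype representations; the beginning of this computation is already displayed in Example~\ref{ex:MS}, and the remaining terms—through the theoretical bound~$15$—are tabulated at~\cite{TPP}.
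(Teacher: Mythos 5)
Your proposal is correct and follows essentially the same route as the paper: reduce via $\ssm(\im(f)) = 1 - \ssm(\Tlocus{\varnothing}{f})$ to Conjecture~\ref{con:SSM2}[T] for $\multi_0 = \varnothing$ and $k=15$, then invoke Theorem~\ref{thm:Interpolation} to turn this into a finite linear interpolation problem solved by computer. One small redundancy: Theorem~\ref{thm:Interpolation} already delivers the conclusion for all $f\in\CC_1$, so the final re-invocation of Lemma~\ref{lem:Interp1} to pass from $\CC_1^{\Ma}$ to $\CC_1$ is unnecessary---that step is internal to the proof of the Interpolation Theorem, not something you need to apply afterward.
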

\begin{proof}
	This proposition is equivalent to Conjecture \ref{con:SSM2}[T] for the empty multisingularity $\multi=\varnothing$ and bound $k=15$. If the polynomial $\ThTMa{\varnothing}{15}$ exists, then
	$$\ThT{\im}=1-\ThTMa{\varnothing}{15}=1-\exp(\MSlog)|_{\le 15}\,.$$
	By Theorem $\ref{thm:Interpolation}$ existence of $\ThTMa{\varnothing}{15}$ may be verified by an explicit computation. Computer algebra software shows that this polynomial exists and computes it. The resulting polynomial $\KazT{\varnothing}$ is presented in Section~\ref{ap:examples}.
\end{proof}
	Suppose that the variety $N$ is compact of dimension at most $15$. Then the above theorem allows to compute the Euler characteristic of the image of $f$, cf. Proposition \ref{pro:csm}[5]
	$$\chi(\im(f))=\int_{N}c_\bullet(TN)\cdot\ThT{\im}(f)\,. $$
	In our case, the target variety $N$ is a vector space $\C^{m+1}$. It is not compact, so we cannot directly use the above formula. Fortunately, there is a generalization involving a $\C^*$-action. Suppose that $V$ is a vector space equipped with a linear $\TT=\C^\star$ action, with no zero weights. For an invariant subvariety $X\subset V$ we have 
	$$\chi(X)=\frac{(c^\TT_\bullet(V)\cdot\ssm_\TT(X))|_{\dim V}}{\eu_\TT(V)}\,. $$
	For a quasihomogeneous stable map $f:\C^m\to \C^{m+1}$, with $m\le 14$ we obtain
	$$\chi(\im(f))=\frac{(c_\bullet(\C^{m+1})\cdot (1-\exp(\MSlog)(f)))|_{m+1}}{\eu(\C^{m+1})}\,. $$
	In fact both sides of the above equation are equal to $1$. The situation gets more interesting, when we relax the assumptions and consider a quasihomogeneous $\A$-finite map $f:\C^m\to \C^{m+1}$, where $m\le 14$. It is proved in \cite[Thm. 6.20]{ohmotoSMTP} that  the same formula computes the Euler characteristic of the image of a stable perturbation $f_t$. Combining this result with formula \eqref{w:Milnor} we obtain
	\begin{align} \label{w:Mond}
		(-1)^m\mu_I(f)+1=\chi(\im(f_t))=\frac{(c_\bullet(\C^{m+1})\cdot (1-\exp(\MSlog)))|_{m+1}}{\eu(\C^{m+1})}\,. 
	\end{align}
\subsection{Algorithm}
	We present an algorithm how to pass from the Master Series $S_\varnothing$ to the image Milnor number $\mu_I(f)$, using formula \eqref{w:Mond}. Let $f:\C^m\to \C^{m+1}$ be a quasihomogeneous $\A$-finite map in the Mather region. Denote the weights on the source by $\alpha_1,\dots,\alpha_m$ and on the target by $\beta_1,\dots,\beta_{m+1}$. We use the following standard notation for symmetric polynomials:
	\begin{itemize}
		\item $h_k(\alpha),h_k(\beta)$ denote the complete symmetric polynomials in variables $\alpha$ and $\beta$, respectively;
		\item $e_k(\alpha),e_k(\beta)$ denote the elementary symmetric polynomials in variables $\alpha$ and $\beta$, respectively;
		\item $e_0(\alpha)=e_0(\beta)=1$;
		\item $e_k(\beta)=0$ for $k>m+1$, $e_k(\alpha)=0$ for $k>m$.
	\end{itemize}
	The characteristic classes of $f$ are computed by 
	\begin{align} \label{w:c(f)}
		&1+ c_1(f)t+c_2(f)t^2+\dots=\frac{(1+\beta_1t)\dots(1+\beta_{m+1}t)}{(1+\alpha_1t)\dots(1+\alpha_{m}t)},
	\\
	\label{w:MondSubstitution}
	&c_k:=c_k(f)=\sum_{i=0}^{k} (-1)^{k-i}e_i(\beta)h_{k-i}(\alpha)\,,\qquad 
	s_0:=\eu(f)=e_{m+1}(\beta)/e_m(\alpha)\,.
	\end{align}
	Due to the adjunction formula for pushforward, for a quasihomogeneous map between affine spaces the Landweber-Novikov classes are determined by the variables $s_\varnothing$ and $c_k$:
	\begin{align} \label{w:MondSubstitution3}
		s_\lambda(f)=s_0\cdot c_{\lambda_1} \cdot c_{\lambda_2} \cdot\ldots\cdot c_{\lambda_l(\lambda)}.
		%\prod_{i=1}^{l(\lambda)} c_{\lambda_i}\,.
	\end{align}
	%We rewrite the polynomial $1-\exp(\MSlog)$ in these variables.
	\begin{df}
		Define the polynomials $K_d \in\QQ[s_0,\cvar]$ for $1\le d\le 15$ to be the graded parts of $1-\exp(\MSlog)_{|\le 15}$ after the substitution \eqref{w:MondSubstitution3}.
	\end{df}
	\begin{ex}
		Here is the calculation of the polynomials $K_d$ for small values of $d$. The Master Series up to degree $3$ is
		$$ \MSlog=-s_\varnothing+\frac{s_1}{2} +\frac{7s_2-2s_{11}}{6} + \dots .$$
		Hence, up to degree $3$, we have
		$$
		1-\exp(\MSlog)= 
        s_\varnothing +
		\frac{1}{2}(-s_1-s_\varnothing^2)+
		\frac{1}{6}(-7s_2+2s_{11}+3s_1s_\varnothing+s_\varnothing^3) +\dots .
		$$
		After the substitution \eqref{w:MondSubstitution3} we obtain
		\begin{align*}
		&1-\exp(\MSlog)_{|\le 15}= s_0 \cdot(1 +
		\frac{1}{2}(-c_1-s_0)+
		\frac{1}{6}(-7c_2+2c_1^2+3c_1s_0+s_0^2) +\dots)\,, \\
		&K_1=s_0\,, \qquad K_2=\frac{s_0}{2}(-c_1-s_0)\,, \qquad K_3=\frac{s_0}{3!}(-7c_2+2c_1^2+3c_1s_0+s_0^2)\,.
		\end{align*}
	    Similarly
        \begin{align*}
			K_4&=\frac{s_0}{4!}\left(-24c_3 + 30 c_1c_2 - 6c_1^3+28c_2s_0-11c_1^2s_0-6c_1s_0^2-s_0^3  \right)\,, \\
            K_5&=\frac{s_0}{5!}\Big(
            -116c_4+116c_1c_3+248c_2^2-156c_1^2c_2+24c_1^4 + (120c_3-220 c_1c_2 +50 c_1^3)s_0 + 
               \\ &\phantom{=\frac{1}{6!}(}  (-70c_2-35c_1^2)s_0^2 +10 c_1s_0^3+s_0^4\Big)
            \,, \\ 
            K_6&=\frac{s_0}{6!}\Big( -720 c_5 + 660 c_1c_4 + 2160 c_2c_3-660 c_1^2c_3 -2280 c_1c_2^2+960 c_1^3c_2 -120 c_1^5 
                \\ &\phantom{=\frac{1}{6!}(}  +(696c_4-1056c_1c_3-1978 c_2^2+1666 c_1^2c_2-274 c_1^4)  s_0  
                \\ &\phantom{=\frac{1}{6!}(}  +(-360c_3+870c_1c_2-225c_1^3)c_0^2+(140c_2-85c_1^2)s_0^3- 15c_1s_0^4-s_0^5\Big)\,.
        \end{align*}
        The (last) polynomial $K_{15}$ has 508 terms, and its largest coefficient is a 16 digit number. All the polynomials $K_1,\ldots,K_{15}$ are available on \cite{TPP}.
    \end{ex}
	Formula \eqref{w:Mond} is equivalent to the following statement.
	\begin{thm}
		Let $f:\C^m\to \C^{m+1}$ be a quasihomogeneous $\A$-finite map, with $m\le 14$. We have
		$$(-1)^m\mu_I(f)+1=\frac{1}{e_{m+1}(\beta)}\sum_{i=1}^{m+1}K_i(s_0,\cvar)\cdot e_{m+1-i}(\beta)\,, $$
		where $s_0$ and $c_k$ are defined by \eqref{w:MondSubstitution}.
	\end{thm}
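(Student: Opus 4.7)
The plan is to show that the claimed formula is nothing more than an explicit algebraic unpacking of the already-established identity \eqref{w:Mond}, using the torus-equivariant weight data attached to the quasihomogeneous map $f$. Since every quantity on the right-hand side of \eqref{w:Mond} is a well-defined element of the polynomial ring in $\alpha,\beta$, all we have to do is evaluate the top-degree part of a certain product and divide by an Euler class.

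First I identify the geometric ingredients. With weights $\beta_1,\dots,\beta_{m+1}$ on the target $\C^{m+1}$, the equivariant Euler class is $\eu(\C^{m+1})=\prod_i\beta_i=e_{m+1}(\beta)$ and the total Chern class is $c_\bullet(\C^{m+1})=\prod_i(1+\beta_i)=\sum_{k=0}^{m+1}e_k(\beta)$, with $e_k(\beta)$ sitting in cohomological degree $k$. The Chern classes $c_k(f)$ and the push-forward class $s_0=\eu(f)$ are then the explicit expressions \eqref{w:MondSubstitution} in $\alpha,\beta$ coming from \eqref{w:c(f)}. Next I evaluate $\exp(\MSlog)(f)$. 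The key algebraic observation is the substitution rule \eqref{w:MondSubstitution3}, which reduces every Landweber–Novikov variable $s_\lambda$ to a monomial $s_0\cdot c_{\lambda_1}\cdots c_{\lambda_{\ell(\lambda)}}$. Granting this, $1-\exp(\MSlog)(f)$ becomes a polynomial in $s_0,\cvar$, and by definition its homogeneous piece of cohomological degree $d$ is precisely $K_d(s_0,\cvar)$. Note $K_0=0$: for $l=1$ the series $\MSlog$ has no constant term (it starts with $-s_\varnothing$ in cohomological degree $1$), so $\exp(\MSlog)$ has constant term $1$.

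The justification of the substitution \eqref{w:MondSubstitution3} is the one genuine technical point. For a quasihomogeneous (hence $\TT$-equivariant) map between affine $\TT$-representations, both $\coh^*_\TT(\C^m)$ and $\coh^*_\TT(\C^{m+1})$ are canonically the polynomial ring $\coh^*(B\TT)$, and $f^*$ is the identity on coefficients. In particular every $c_k(f)$ lies in the image of $f^*$, so the projection (adjunction) formula gives
\[
s_\lambda(f)=f_*\bigl(c_\lambda(f)\bigr)=f_*\bigl(f^*c_\lambda(f)\cdot 1\bigr)=c_\lambda(f)\cdot f_*(1)=s_0\cdot c_{\lambda_1}\cdots c_{\lambda_{\ell(\lambda)}},
\]
which is \eqref{w:MondSubstitution3}. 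Assembling the pieces, the cohomological degree-$(m+1)$ component of the numerator in \eqref{w:Mond} is
\[
\bigl(c_\bullet(\C^{m+1})\cdot(1-\exp(\MSlog))(f)\bigr)\big|_{m+1}
=\sum_{i=0}^{m+1}K_i(s_0,\cvar)\cdot e_{m+1-i}(\beta)
=\sum_{i=1}^{m+1}K_i(s_0,\cvar)\cdot e_{m+1-i}(\beta),
\]
and dividing by $\eu(\C^{m+1})=e_{m+1}(\beta)$ yields exactly the claimed formula.

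The ``hard'' part has already been done: it is the existence of $\ThTMa{\varnothing}{15}$ and the validity of \eqref{w:Mond} for $\A$-finite (not merely stable) quasihomogeneous germs in the Mather range, which rest on the Interpolation Theorem \ref{thm:Interpolation}, the explicit computation of $\MSlog$ reported in Example \ref{ex:MS}, and Ohmoto's extension \cite[Thm.~6.20]{ohmotoSMTP} of the Euler-characteristic formula from stable maps to stable perturbations. Once these are in hand, the derivation of the stated theorem is the straightforward polynomial manipulation outlined above.
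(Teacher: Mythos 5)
Your proposal is correct and takes essentially the same route as the paper: the paper states only that the theorem is "equivalent to" formula \eqref{w:Mond}, relying on the definitions of the $K_d$ and the substitution \eqref{w:MondSubstitution3} just as you do. You merely spell out the bookkeeping (identifying $c_\bullet(\C^{m+1})=\sum e_k(\beta)$, $\eu=e_{m+1}(\beta)$, extracting the degree-$(m+1)$ piece of the product, noting $K_0=0$) and justify \eqref{w:MondSubstitution3} via the projection formula, all of which is left implicit in the paper.
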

	For $m\leq 5$, \cite{PallaresPenafort} presents a formula for $\mu_I(f)$, in a slightly different way:
	\begin{thm*}[{\cite[Thm. 2.1]{PallaresPenafort}}]
		Let $f:\C^m\to \C^{m+1}$ be a quasihomogeneous $\A$-finite map, with $m\le 5$. Then
		$$(-1)^m\mu_I(f)+1=\frac{1}{e_{m}(\alpha)}\sum_{i=0}^{m}L_i(s_0,\cvar)\cdot e_{m-k}(\alpha)\,, $$
		where $s_0$ and $c_k$ are defined by the formula \eqref{w:MondSubstitution} and $B_i$ are polynomials of the form:
		\begin{align*}
			L_0&=1\,,\\
			L_1&=\frac{1}{2!}(c_1-s_0)\,, \\
			L_2&=\frac{1}{3!}(s^2_0-c_1^2-c_2)\,, \\
			L_3&=\frac{1}{4!}(-s_0^3-2 s_0^2c_1+ s_0 c_1^2+16 s_0 c_2+2c_1^3-10c_1c_2)\,, \\
			L_4&=\frac{1}{5!}(s_0^4+5s_0^3c_1+5s_0^2c_1^2-50s_0^2c_2-5s_0c_1^3-20s_0c_1c_2+60s_0c_3-6c_1^4+34c_1^2c_2-64c_1c_3+108c_2^2+4c_4)\,, \\
			L_5&=\frac{1}{6!}(-s_0^5-9s_0^4c_1-25s_0^3c_1^2+110s_0^3c_2-15s_0^2c_1^3+270s_0^2c_1c_2-240s_0^2c_3+26s_0c_1^4+16s_0c_1^2c_2+24s_0c_1c_3 
			\\ &\phantom{=\frac{1}{6!}(}
			-1138s_0c_2^2+336s_0c_4+24c_1^5-156c_1^3c_2+276c_1^2c_3+108c_1c_2^2-396c_1c_4+600c_2c_3)\,. \\
		\end{align*}
	\end{thm*}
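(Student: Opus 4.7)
The plan is to obtain the stated identity by unwinding the right-hand side of \eqref{w:Mond} in terms of the source and target weights, using the quasihomogeneity to pass from Landweber--Novikov classes to the variables $s_0,\cvar$. The conceptual content is already packaged in two earlier results: the Interpolation Theorem, which together with the explicit computation of $\MSlog$ up to degree~$15$ (Example~\ref{ex:MS}) produces $\ThTMa{\varnothing}{15}=\exp(\MSlog)|_{\le 15}$; and Ohmoto's identity \cite[Thm.~6.20]{ohmotoSMTP}, which licenses the use of this Thom polynomial to compute $\chi(\im f_t)$ via the equivariant $\ssm$ integral, giving \eqref{w:Mond}. The remainder of the proof is bookkeeping in a polynomial ring.

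Concretely, I would first record the equivariant Chern and Euler classes of the target $\C^{m+1}$ in terms of the target weights:
\[
c_\bullet(\C^{m+1}) = \prod_{j=1}^{m+1}(1+\beta_j) = \sum_{j=0}^{m+1} e_j(\beta), \qquad \eu(\C^{m+1}) = e_{m+1}(\beta).
\]
Since $f$ is a quasihomogeneous map between affine spaces, the adjunction formula yields the substitution $s_\lambda(f) = s_0 \cdot c_{\lambda_1}\cdots c_{\lambda_{\ell(\lambda)}}$ of \eqref{w:MondSubstitution3}, so by definition of the $K_d$ we have
\[
\bigl(1-\exp(\MSlog)\bigr)(f)\bigr|_{\le 15} = \sum_{d=1}^{15} K_d(s_0,\cvar),
\]
with $K_d$ homogeneous of cohomological degree $d$. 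Multiplying by $c_\bullet(\C^{m+1})$ and extracting the degree-$(m{+}1)$ component selects the pairs $(d,j)$ with $d+j = m+1$; since $m\le 14$ we have $m+1\le 15$, so no terms are lost by the truncation, and since $K_0 = 0$ and $e_j(\beta)=0$ for $j>m+1$, the index $i := d$ ranges from $1$ to $m+1$. This gives
\[
\bigl(c_\bullet(\C^{m+1})\cdot (1-\exp(\MSlog))(f)\bigr)\big|_{m+1}
= \sum_{i=1}^{m+1} K_i(s_0,\cvar)\cdot e_{m+1-i}(\beta),
\]
and dividing by $\eu(\C^{m+1})=e_{m+1}(\beta)$ together with \eqref{w:Mond} yields the theorem.

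The main obstacle, or rather subtlety, is not in the algebra just sketched but in the validity of \eqref{w:Mond} itself for $\A$-finite (not merely stable) quasihomogeneous germs up to the Mather bound $M(1)=14$. This requires the existence of $\ThTMa{\varnothing}{15}$, which is supplied by the Interpolation Theorem~\ref{thm:Interpolation} applied with $\multi_0 = \varnothing$ and $k=15$, followed by the explicit computer verification that the constraints \eqref{i1} and \eqref{i2} admit a solution in degree $\le 15$; this verification, together with Ohmoto's extension of the $\ssm$--integral formula to stable perturbations, are the genuine inputs. Once these are in place, the derivation above is a short, purely formal manipulation in $\QQ[s_0,\cvar,\alpha,\beta]$.
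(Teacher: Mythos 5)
Your derivation is sound up to the point where it lands, but it lands on the wrong target: the chain of manipulations you carry out (equivariant Chern/Euler classes of the target in terms of $\beta$, the substitution $s_\lambda(f)=s_0\,c_{\lambda_1}\cdots$, extracting the degree-$(m{+}1)$ component) produces
\[
(-1)^m\mu_I(f)+1
=\frac{1}{e_{m+1}(\beta)}\sum_{i=1}^{m+1}K_i(s_0,\cvar)\,e_{m+1-i}(\beta),
\]
which is the paper's \emph{own} theorem stated just before the one you were asked to prove. The statement under review is the Pallar\'es--Pe\~nafort formula, which is expressed through different polynomials $L_i$ and the elementary symmetric polynomials $e_{m-i}(\alpha)$ in the \emph{source} weights, with the normalization by $e_m(\alpha)$ rather than $e_{m+1}(\beta)$. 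Your proposal never touches $\alpha$ or $L_i$, so the sentence ``dividing by $\eu(\C^{m+1})$ together with \eqref{w:Mond} yields the theorem'' is not justified for this statement.

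What is missing is the bridge between the two formulas. The paper supplies it via the pushforward operator $\FF$ of Section~\ref{s:F}: the generating-function identity
\[
s_0\cdot(L_0+L_1t+\dots+L_5t^5)=(K_1+K_2t+\dots+K_6t^5)\cdot(1+c_1t+\dots+c_5t^5)+o(t^6)
\]
combined with $1+c_1t+\dots=\prod_j(1+\beta_jt)/\prod_i(1+\alpha_it)$ and $s_0=e_{m+1}(\beta)/e_m(\alpha)$ shows, after multiplying through by $\prod_i(1+\alpha_it)$ and comparing coefficients of $t^m$, that
\[
\frac{1}{e_m(\alpha)}\sum_{i=0}^{m}L_i\,e_{m-i}(\alpha)
=\frac{1}{e_{m+1}(\beta)}\sum_{i=1}^{m+1}K_i\,e_{m+1-i}(\beta).
\]
You would need to include this (or an equivalent finite polynomial check of the identity relating the $L_i$ to the $K_i$, valid for $m\le 5$) to complete the proof. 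Alternatively, as the paper does, one can simply cite the theorem from Pallar\'es--Pe\~nafort and present the above comparison as a consistency check rather than a proof; but as a blind derivation your argument stops one lemma short.
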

	The expressions of the last two theorems are only seemingly different. They are related by the map $\FF$ from Section \ref{s:F}. More precisely, we have
	$$
	s_0\cdot(L_0+L_1t+\dots+L_5t^5)=
	(K_1+K_2t^1+\dots+K_6t^5)\cdot(1+c_1t+\dots+c_5t^5) +o(t^6).
	$$
	Using \eqref{w:c(f)} we obtain
	$$\frac{(L_0+\dots+L_5t^5)\cdot(1+e_1(\alpha)t+\dots+e_5(\alpha)t^5)}{e_m(\alpha)}
	=
	\frac{(K_1+\dots+K_6t^5)\cdot(1+e_1(\beta)t+\dots+e_5(\beta)t^5)}{e_{m+1}(\beta)} +o(t^6).
	$$
	For $m\le 5$, a comparison of the coefficient of $t^m$ shows that both expressions give the same result.

\begin{cor}
    The expression we gave for the image Milnor number is a rational expression in the weights $w$ and the degrees $d$. If that expression is not a positive integer for some values of $w$ and $d$, then there exists no finite quasihomogeneous germ with those data.
\end{cor}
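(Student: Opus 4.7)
The plan is to deduce this statement directly from the preceding theorem together with the elementary fact that the image Milnor number is, by its definition, a non-negative integer. First I would observe that after the substitutions \eqref{w:MondSubstitution} and \eqref{w:MondSubstitution3}, each polynomial $K_i(s_0,\cvar)$ becomes a rational function in the weights $w=(\alpha_j)$ and degrees $d=(\beta_i)$: indeed $s_0 = e_{m+1}(\beta)/e_m(\alpha)$ is rational in $(w,d)$, and each $c_k$ is polynomial in $(w,d)$. Consequently the right-hand side of the formula in the preceding theorem is a rational function of $(w,d)$, and solving for $\mu_I(f)$ likewise yields a rational expression $R(w,d)\in \QQ(w,d)$ whose value, at the weights and degrees of any finite quasihomogeneous $\A$-finite germ in the Mather range, equals $\mu_I(f)$.

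Next I would invoke the geometric definition of the image Milnor number: by \eqref{w:Milnor}, and the fact recalled in the introduction and above (\cite[Prop.~8.3]{MNB}) that the image of a stable perturbation of an $\A$-finite germ in the Mather range is homotopy equivalent to a wedge of $m$-spheres, one has $\mu_I(f) \in \ZZ_{\ge 0}$ for every such germ. Combining the two observations, whenever a finite quasihomogeneous germ $f:(\C^m,0)\to(\C^{m+1},0)$ with $m\le 14$ exists with source weights $w$ and target degrees $d$, the rational expression $R(w,d)$ is forced to be a non-negative integer. Taking the contrapositive gives the corollary: if $R(w_0,d_0)$ fails to be a non-negative integer at specific numerical values $(w_0,d_0)$ --- for instance, if it is a genuine fraction or a negative rational --- then no finite quasihomogeneous germ with those data can exist.

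There is no substantial obstacle: the proof is purely formal, assuming the preceding theorem, and consists of the two trivial observations that (i) the stated formula is rational in $(w,d)$ and (ii) it computes an integer-valued geometric invariant. The only mild point worth remarking on is the phrasing ``positive integer'' in the statement: stable germs have $\mu_I(f)=0$, so the sharp conclusion is rather ``non-negative integer'', which coincides with the stated phrasing precisely when the germ is additionally non-stable (i.e.\ $\A_e\text{-}\codim(f)>0$). I would make this observation explicit in the proof to avoid any ambiguity.
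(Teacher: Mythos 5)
Your proof is correct and is essentially the argument the paper has in mind; the paper gives no explicit proof for this corollary, treating it as immediate from the preceding theorem together with the fact that $\mu_I(f)$ counts spheres in a bouquet and is therefore a non-negative integer. Your observation that the paper's phrase ``positive integer'' should strictly be ``non-negative integer'' (to accommodate stable germs, where $\mu_I=0$) is a valid and worthwhile clarification.
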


\begin{ex} \label{ex:ImageMilnorExample}
    If there exists a quasihomogeneous finite germ $(\C^{10},0) \to (\C^{11},0)$ with weights  and degrees 
    \[ 
    w=(1,1,2,2,3,4,4,5,5,5), \qquad d=(1,2,2,3,4,4,5,5,6,7,10),
    \]
    then its image Milnor number is 34,938,044. There exists no quasihomogeneous finite germ $(\C^{10},0) \to (\C^{11},0)$ with  $w=(1,1,2,2,3,4,4,5,5,5)$, $d=(1,2,2,3,4,4,5,5,6,7,11)$.
\end{ex}

\section{Appendix: Uniqueness} \label{ap:unique}
\subsection{Test maps}
Fix $m\ge 1$ and $l\ge 1$. \\
For a tuple of positive integers $\avar=(a_1,\dots, a_m)$ let $f_\avar:\C^m\to\C^{m+l}$ be a map given by
$$f_\avar(x_1,\dots,x_m)=(x_1^{a_1},\dots, x_m^{a_m},0,\dots,0)\,.$$
Denote by $F_\avar$ its stable unfolding. We will describe the action of the torus under which the maps $f_\avar$ and $F_\avar$ are equivariant. 

Let $\TT_+ = (\C^*)^{m+l}$ be a torus and $\TT = (\C^*)^{m}$ its coordinate subtorus. Denote by $\alpha_1,\dots,\alpha_m,\beta_1,\dots,\beta_l$ the coordinate characters of $\TT_+$. The characters $\alpha$ correspond to the subtorus $\TT$. The full torus $\TT_+$ acts on the domain and codomain of $f_\avar$ diagonally with weights $\alpha_1,\dots,\alpha_m$ and $a_1\alpha_1,\dots,a_m\alpha_m,\beta_1,\dots,\beta_l$, respectively. The action extends naturally to the unfolding space, and we obtain that the $\TT_+$-equivariant Euler class 
$$\eu(F_\avar)=\prod_{i=1}^m a_i\prod_{i=1}^l \beta_i \in \coh^\bullet_{\TT_+}(\pt)$$
is nonzero. It is also nonzero after restriction to a general one parameter subgroup $\sigma:\C^*\to\TT_+$. 
 
\subsection{Chern variables}

\begin{pro} \label{pro:uniqueC1}
	Consider a polynomial $A\in \QQ[\cvar]$. Suppose that for every  $f \in \CC_l$  we have
	$$A(c(f))=0\in \coh^\bullet(X)\,.$$ Then $A=0$ as a polynomial.
\end{pro}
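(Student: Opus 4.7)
The plan is to use the equivariant test maps $F_\avar$ introduced at the beginning of the appendix. Since $F_\avar$ is a stable (hence locally stable) map, and becomes an honest finite map between manifolds after passing to a sufficiently good finite-dimensional approximation of the Borel construction $B_{\TT_+}F_\avar$ (cf.\ Section~\ref{sec:CC_l}), it gives a supply of maps in $\CC_l$. It therefore suffices to show that for any nonzero $A\in\QQ[\cvar]$ one can choose $m$, $\avar$ and equivariant characters so that $A(c(F_\avar))\neq 0$ in $\coh^*_{\TT_+}(\mathrm{pt})=\QQ[\alpha_i,\beta_j]$.

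First I would write down the total Chern class explicitly. From the description of the $\TT_+$-weights on source and target of $F_\avar$, the relative Chern class is
$$
c(F_\avar)(t)=\frac{\prod_{i=1}^{m}(1+a_i\alpha_i t)\prod_{j=1}^{l}(1+\beta_j t)}{\prod_{i=1}^{m}(1+\alpha_i t)}\in\QQ[\alpha,\beta][[t]].
$$
Setting $\beta_j=0$ and taking logarithmic derivatives, the power sums
$$
p_k=\sum_{i=1}^{m}(a_i^k-1)\alpha_i^k,\qquad k\ge 1,
$$
are related to the coefficients $c_k(F_\avar)$ by Newton's identities over $\QQ$, so $c_1(F_\avar),\dots,c_N(F_\avar)$ are algebraically independent if and only if $p_1,\dots,p_N$ are.

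The main step is showing that $p_1,\dots,p_N$ are algebraically independent in $\QQ[\alpha_1,\dots,\alpha_m]$ for $m\ge N$ and generic choice of the integers $a_i$. For this I would compute the Jacobian $(\partial p_k/\partial\alpha_i)_{k,i=1}^{N}$, whose entry is $k(a_i^k-1)\alpha_i^{k-1}$. Factoring out $\mathrm{diag}(k)$ on the left and $\mathrm{diag}(\alpha_i^{k-1})$ on the right, one is reduced to invertibility of the $N\times N$ matrix $\bigl(a_i^k-1\bigr)_{k,i=1}^{N}$, which is a Vandermonde-type determinant: it is nonzero whenever the $a_i$ are distinct and different from $1$ (e.g.\ $a_i=i+1$ works). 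Hence the Jacobian has rank $N$ at a generic point, so $p_1,\dots,p_N$ are algebraically independent.

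Finally, taking $N$ to be the degree of $A$ and choosing $m\ge N$ and generic $a_i$, the hypothesis gives $A(c_1(F_\avar),\dots,c_N(F_\avar))=0$ as an element of $\QQ[\alpha_i]$, but by algebraic independence this forces $A=0$ as a polynomial. Pulling back to a finite-dimensional approximation of $B\TT_+$ produces a genuine map in $\CC_l$ realizing the same identity. The only nontrivial point is the Jacobian/Vandermonde computation, and this is routine once the setup is in place.
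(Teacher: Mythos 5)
Your proof is essentially correct but takes a genuinely different route from the paper's. Both arguments start from the test maps $F_\avar$ and the computation $c(F_\avar)=\prod_i\frac{1+a_i\alpha_i}{1+\alpha_i}$ in equivariant cohomology. The paper then exploits the freedom to vary $\avar$: since the coefficients of $A(c_\avar)$ are polynomials in the integer parameters $\avar$ vanishing on all of $\ZZ_+^m$, they vanish identically, and one may therefore formally set $\avar=0$ to get $c_\avar=\prod_i(1+\alpha_i)^{-1}$, at which point algebraic independence of the complete symmetric polynomials finishes the job. You instead fix a single generic $\avar$ and prove directly that $c_1(F_\avar),\dots,c_N(F_\avar)$ are algebraically independent, via Newton's identities and a Jacobian/Vandermonde computation. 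The paper's method is shorter and avoids any determinant computation; yours is more elementary and shows that a \emph{single} well-chosen test map already suffices, which is a slightly stronger statement.

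One small technical slip in your Jacobian step: the entry $k(a_i^k-1)\alpha_i^{k-1}$ has $\alpha_i^{k-1}$ depending on both the row $k$ and the column $i$, so it cannot be factored out as a diagonal matrix on the right. The conclusion is still correct, and the cleanest fix is to evaluate the determinant at a specific point, e.g.\ $\alpha_1=\dots=\alpha_N=1$, where it equals $N!\cdot\det\bigl(a_i^k-1\bigr)_{k,i=1}^N$. Adjoining a row and column corresponding to $a_0=1,\ k=0$ turns this into a genuine Vandermonde determinant, giving
\begin{equation*}
\det\bigl(a_i^k-1\bigr)_{k,i=1}^N=\prod_{0\le i<j\le N}(a_j-a_i),\qquad a_0:=1,
\end{equation*}
which is nonzero exactly when the $a_i$ are distinct and all different from $1$ (e.g.\ $a_i=i+1$). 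Since a polynomial nonzero at one point is nonzero generically, the rank-$N$ claim follows. Alternatively one can expand $\det\bigl((a_i^k-1)\alpha_i^{k-1}\bigr)$ over permutations and observe that each permutation contributes a distinct monomial in the $\alpha_i$, so the determinant is nonzero as a polynomial as soon as all $a_i>1$.

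Finally, you should also observe (as the paper does implicitly) that the equivariant argument genuinely produces maps in $\CC_l$: after passing to a finite-dimensional approximation of the Borel construction, $B_\TT F_\avar$ is a finite map between manifolds, so the hypothesis $A(c(f))=0$ applies. You do mention this, so the proof is complete modulo the factoring remark above.
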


\begin{proof}
	Pick $m\ge\deg A$.
	For a tuple  $\avar=(a_1,\dots, a_m) \in \ZZ_+^m$ we consider the test maps $f_\avar$ and $F_\avar$ with the  $\TT$ action. Let
	\begin{align} \label{w:c}
	    c_\avar:=c_\bullet(T_{F_\avar})=c_\bullet(T_{f_\avar})=\prod\frac{1+a_i\alpha_i} {1+\alpha_i}\in \coh^\bullet_\TT(\pt) \,.
	\end{align}
	We have $F_\avar\in\CC_l$, therefore for any $\avar \in \ZZ_+^m$ we have
	$$A(c_\avar)=0\in \coh^\bullet_\TT(\pt) \simeq\QQ[\alpha_1,\dots,\alpha_m]\,.$$
	Coefficients of the polynomial $A(c_\avar)$ are polynomials in the variables $\avar$ that vanish for every $\avar\in\ZZ_+^m$. Therefore, they vanish for an arbitrary $\avar\in \QQ^m$. In particular for $\avar=(0,\dots,0)$ we obtain
	$$c_\avar=\prod\frac{1}{1+\alpha_i} $$
	%\qquad c_i=e_i(\alpha_1,\dots,\alpha_m)\,.$$
	and we still have $A(c_\avar)=0$. The proposition follows form the algebraic independence of complete symmetric polynomials. 
\end{proof}

\begin{figure}
\[
\begin{adjustbox}{angle=90}
\begin{tabular}{| C | C | C | C | C | C | C |}
\hline
& \deg 1 & \deg 2 & \deg 3 & \deg 4 & \deg 5 & \deg 6 \\
\hline\hline
& && & & 
   {\color{red}\frac{1}{30}(29s_4-29s_{31}} 
   & {\color{red} \frac{1}{12}(12 s_{5} -11 s_{41} -36 s_{32} }
\\ 
{\color{red}\varnothing}
 & {\color{red} -s_0}  
 & {\color{red}\frac{1}{2}s_1}  
 & {\color{red} \frac{1}{6} (7s_2-2s_{1^2})}
 & {\color{red}\frac{1}{4}( 4s_3-5s_{21} +s_{1^3})} 
 & {\color{red} -62s_{2^2}+39s_{21^2} }
 & {\color{red} + 11s_{31^2} + 38 s_{2^21}} 
 \\ 
 & & & & &{\color{red} -6s_{1^4})} & {\color{red}-16 s_{21^3} + 2 s_{1^5})}
  \\ 
 
 \hline\hline
{A_0}& s_0 & 0 & -2s_2 & -2s_3& s_{31}+5s_{22}& 2s_5+2s_{41}+10s_{32}
\\ 
 & & & & & & 
 \\ 
 \hline
{A_0^2}& & -s_1 & -s_2 & 3s_3+6s_{21}& 7s_4+8s_{31}+4s_{22}&  
-17s_5 -20s_{41} -22s_{32}
\\ 
 & & & & & & -8s_{311}-28s_{221}
 \\ 
 \hline
 {A_1}& & & s_2 & s_3  & -3s_4-2s_{31}-4s_{22} &
 -7s_5 - 4s_{41}-8s_{32}
\\ 
 & & & & & & 
 \\ 
 \hline
 {A_0^3}& & & 2!(s_2+s_{11}) &2!( 3s_3+3s_{21}) & -2!(11s_4+16s_{31}&  
  -2!( 57s_5 + 63 s_{41} 
\\ 
 & & & & & +7s_{22}+12s_{211}) & +42s_{32} +24s_{311} +24s_{221})
 \\ 
 \hline
 {A_1A_0}& & & & -2s_3-2s_{21}& -4s_4-3s_{31}-s_{22}  &  
26s_5+28s_{41}+22s_{32}
\\ 
 & & & & & & +8s_{311}+16s_{221}
 \\ 
 \hline
 {A_0^4}& & & & -3!(2s_3+3s_{21}+s_{111})   & -3!( 12s_4 +14s_{31} &  
3!(46s_{5}+77s_{41}+32s_{32}
\\ 
 & & & & &+4s_{22} +6s_{211})   & +49s_{311} +34s_{221}+20 s_{2111})
 \\ 
 \hline
 {A_2}& & & & &  2s_4 +s_{31} +s_{22}   &   4s_{5}+2s_{41} +2s_{32}
\\ 
 & & & & & & 
 \\ 
 \hline
  {A_1A_0^2}& & & & & 12s_4 +14 s_{31} &  
   48s_{5}+48s_{41}+24s_{32}
\\ 
 & & & & &  +4s_{22} +6s_{211}& +14s_{311}+10s_{221}
 \\ 
 \hline
  {A_0^5}& & & & & 4!( 6 s_4 +9 s_{31}  &  
  4!(60 s_{5}+80 s_{41}+30 s_{32}
\\ 
 & & & & &+ 2s_{22} +6 s_{211} + s_{1111}) & +40s_{311}+20s_{221}+10s_{2111})
 \\ 
 \hline
   {A_2A_0}& & & & &  &  
    -12s_5-12s_{41}-6s_{32}
\\ 
 & & & & & & -3s_{311}-3s_{221}
 \\ 
 \hline
   {A_1^2}& & & & &  &  
  -12s_5-10s_{41}-8s_{32}
\\ 
 & & & & & & -2s_{311}-4s_{221}
 \\ 
 \hline
   {A_1A_0^3}& & & & &  & 
   24(6s_5+8s_{41}+3s_{32}
 \\ 
 & & & & & & +4s_{311}+2s_{221}+s_{2111})
 \\ 
 \hline
   {A_0^6}& & & & &  &  
-5!( 24 s_5 -38 s_{41} +12 s_{32} +25 s_{311} 
\\ 
 & & & & & & +10 s_{221} +10 s_{2111}+ s_{11111})
 \\ 
 \hline
\end{tabular} 
\end{adjustbox}
\]
\vfill \ 
\caption{$S_\multi$ polynomials for $l=1$ up to cohomological degree 6}\label{fig:Sl1}
\end{figure}

%%%%%%%%%%%%%%%%%%%%%%%%%%
%%%%%%%%%%%%%%%%%%%%%%%%%%
%%%%%%%%%%%%%%%%%%%%%%%%%%

\begin{figure}[htbp]
\[
\begin{adjustbox}{angle=90}
\begin{tabular}{| C | C | C | C | C | C | C |}
\hline
& \deg 0 & \deg 1 & \deg 2 & \deg 3 & \deg 4 & \deg 5 \\
\hline\hline
%{\varnothing}& -s_0 & 1/2s_1 & 7/6s_2-1/3s_{11}& & &  
%\\ 
% & & & & & & 
% \\ 
% \hline\hline
{A_0}& 1  & c_1 & -c_2 & -c_3-2c_{21}& c_4+3c_{22}-c_{31} & 
3c_5+6c_{32}+2c_{41}
\\ 
 & & & & & & +5c_{221}+c_{311}
 \\ 
 \hline
{A_0^2}&  & -c_1 & -c_2-c_{11} & 3c_3+4c_{21}& 7c_4+3c_{22}+10c_{31}+6c_{211} & 
-17c_5-20c_{32}
\\ 
 & & & & & & -14c_{41}-18c_{221}
 \\ 
 \hline
 {A_1}&  &  & c_2 & c_3+c_{21}& -3c_4-3c_{22}-c_{31} &
   -7c_{5}-6c_{32}-7c_{41}
\\ 
 & & & & & & -4c_{221}-2c_{311}
 \\ 
 \hline
 {A_0^3}&  &  & 2!(c_2+c_{11}) & 2!(3c_3+4c_{21}+c_{111})  & -2!(11c_4+6c_{22} & 
 -2!( 57c_5+74c_{41}
\\ 
 & & & & & +13c_{31}+8c_{221} )& +38c_{32}+39c_{311}+12c_{2111})
 \\ 
 \hline
 {A_1A_0} & &  &  & -2c_3-2c_{21} &  -4c_4-c_2^2-5c_{31}-2c_{211}  & 
  26c_5+24c_{41}+20c_{32}
\\ 
 & & & & & & +13c_{221}+5c_{311}
 \\ 
 \hline
 {A_0^4}&  &  &  & -3!(2c_3+3c_{21}+c_{111}) & -3!(12c_4+16c_{31}+4c_{22}&
 3!(46c_5+65c_{41}+30c_{32}
\\ 
 & & & & & +9c_{211}+c_{1111})  & +27c_{221}+35c_{311}+13c_{2111})
 \\ 
 \hline
 {A_2}&  &  &  & &  2c_4 + c_{31} + c_{22}  & 
 4c_5+4c_{41}+2c_{32}+c_{221}+c_{311}
\\ 
 & & & & & & 
 \\ 
 \hline
 {A_1A_0^2}&  &  &  & & 12c_4+14c_{31}+4c_{22}+6c_{211}   & 
  48c_5+60c_{41}+24c_{32}
  \\ 
 & & & & & & +14c_{221}+28c_{311}+6c_{2111} 
 \\ 
 \hline
 {A_0^5}&  &  &  & &  4!(6c_4+2c_{22}+9c_{31} & 
4!( 60c_5 +30c_{32}+86c_{41}+22c_{221}
\\ 
 & & & & &+6c_{211}+c_{1111})  & +49c_{311}+16c_{2111}+c_{11111})
 \\ 
 \hline
 {A_2A_0}&  &  &  & &  & 
-12c_5-12c_{41}-6c_{32}
\\ 
 & & & & & & -3c_{221}-3c_{311}
 \\ 
 \hline
  {A_1^2}&  &  &  & &  & 
  -2(6c_5+5c_{41}+4c_{32}
\\ 
 & & & & & & +2c_{221}+c_{311})
 \\ 
 \hline
\end{tabular} 
\end{adjustbox}
\]
\vfill \ 
\caption{$R_\multi$ polynomials for $l=1$ up to cohomological degree 5. Recall that $\KazS{\multi}$ does not depend on the distinguished element of $\multi$.}\label{fig:Rl1}
\end{figure}

\subsection{Landweber-Novikov variables}
\begin{pro} \label{pro:uniqueS1'}
	Consider a polynomial $B\in \QQ[\svar]$. Suppose that for every $f\in \CC_l$  we have
	$$B(s(f))=0\in \coh^\bullet(Y)\,.$$
    Then $B=0$ as a polynomial.
\end{pro}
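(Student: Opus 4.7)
The plan is to adapt the test-map strategy of Proposition~\ref{pro:uniqueC1} by taking disjoint sums of the basic test maps $F_\avar$ and working in equivariant cohomology. This will supply enough independence among the Landweber--Novikov classes to recover all coefficients of $B$ via polarization.

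Write $B=\sum_{e=0}^{d}B_e$ in terms of its $\svar$-homogeneous parts, where $d$ is the $\svar$-degree of $B$. For tuples $\avar^{(1)},\dots,\avar^{(d)}\in\ZZ_+^m$, consider the disjoint sum
\[
F=F_{\avar^{(1)}}\sq F_{\avar^{(2)}}\sq\cdots\sq F_{\avar^{(d)}},
\]
equivariant under the product torus $\TT_+^{(1)}\times\cdots\times\TT_+^{(d)}$, where each $\TT_+^{(i)}$ acts only on the $i$-th summand. Using $(F_{\avar^{(i)}})_*1=\eu(\rhoT^{(i)})/\eu(\rhoS^{(i)})=\gamma^{(i)}$ with $\gamma^{(i)}:=\prod_{p=1}^m a_p^{(i)}\prod_{q=1}^l\beta_q^{(i)}$, together with Proposition~\ref{pro:sum2} and the projection formula, one obtains
\[
s_\lambda(F)=\sum_{i=1}^d c_\lambda^{(i)}\,\gamma^{(i)},\qquad c_\lambda^{(i)}:=c_\lambda(F_{\avar^{(i)}})\in\QQ[\alpha^{(i)},a^{(i)}],
\]
in the equivariant cohomology of the target. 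The hypothesis applied to $F$, together with the Zariski density of $\ZZ_+^m$ in each $\avar^{(i)}$, gives $B(s(F))=0$ as an identity of polynomials in the $\alpha^{(i)},\beta^{(i)},a^{(i)}$.

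For each $e\in\{0,\dots,d\}$, I would extract the coefficient of the $\beta$-monomial $\prod_{i\le e}\prod_q \beta_q^{(i)}$---equivalently, of the product $\gamma^{(1)}\cdots\gamma^{(e)}$ with the remaining $\gamma^{(i)}$ absent. Only the summand $B_e$ can contribute, since any monomial of $B_{e'}$ produces $\gamma$-monomials of total degree $e'$; and for an $\svar$-monomial $s^\mu$ with $|\mu|=e$, only index tuples $(i_1,\dots,i_e)$ permuting $(1,\dots,e)$ survive. After cancelling the common factor $\prod_{i\le e}\prod_p a_p^{(i)}$, one obtains
\[
\sum_{\mu:\,|\mu|=e} b_\mu\, Q_\mu\bigl(c^{(1)},\dots,c^{(e)}\bigr)=0,\qquad Q_\mu:=\sum_{\sigma\in S_e}\prod_{j=1}^e c_{\lambda_j}^{(\sigma(j))},
\]
for any ordering $(\lambda_1,\dots,\lambda_e)$ of the multiset $\mu$. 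Applying Proposition~\ref{pro:uniqueC1} independently in each block of variables $c^{(i)}_\bullet$ (varying $\avar^{(i)}$ over $\ZZ_+^m$) upgrades this to an identity in $\QQ[c^{(1)}_\bullet,\dots,c^{(e)}_\bullet]$.

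The main remaining step---and the principal obstacle---is to verify the linear independence of the polynomials $Q_\mu$ as $\mu$ ranges over multisets of $e$ partitions. Expanding $c_\lambda^{(i)}=\prod_t c_{\lambda_t}^{(i)}$, each monomial of $Q_\mu$ has the form $\prod_{j=1}^e \prod_t c_{(\lambda_{\sigma(j)})_t}^{(j)}$ and records, for every upper index $j$, exactly the partition $\lambda_{\sigma(j)}$ assigned to it; from any single such monomial one therefore recovers the multiset $\mu$. Distinct $\mu$ thus yield $Q_\mu$'s supported on disjoint sets of monomials, forcing $b_\mu=0$ for every $|\mu|=e$. Ranging $e$ over $0,1,\dots,d$ gives $B_e=0$ for all $e$, i.e., $B=0$.
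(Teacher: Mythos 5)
Your argument is correct, and it follows a genuinely different route from the paper's.

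The paper first packages the substitution $s_\lambda \mapsto c_\lambda$ into a polynomial map $S\colon V_c\to V_s$ between two finite-dimensional spaces, then shows (i) that $B$ vanishes on the linear span of $S(X)$, using a \emph{diagonal} $\C^*$-action on iterated disjoint sums $F^{(b_1)}_{\avar_1}\sq\cdots\sq F^{(b_k)}_{\avar_k}$ together with Zariski density in the integer multiplicities $b_i$, and (ii) that $S(X)$ spans $V_s$, by a contradiction argument that feeds a putative linear relation through $S$ back into Proposition~\ref{pro:uniqueC1}. You instead keep the full \emph{product} torus acting independently on the $d$ summands, which puts the coefficient of each $s_\lambda$ of the form $\sum_i c^{(i)}_\lambda\gamma^{(i)}$ with $d$ independent ``weights'' $\gamma^{(i)}$. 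Extracting the coefficient of a single square-free $\beta$-monomial isolates the $\svar$-degree-$e$ piece of $B$ and polarizes it into the symmetrized expressions $Q_\mu$; the rest reduces to (a multi-block version of) Proposition~\ref{pro:uniqueC1} plus the observation that the $Q_\mu$'s have disjoint monomial supports, which is a clean combinatorial fact. Your route trades the geometry of the spanning argument for explicit bookkeeping; it is a bit longer in detail but more self-contained, avoids introducing the auxiliary spaces $V_c,V_s$ and the map $S$, and makes the ``for each degree $e$ separately'' structure transparent. Both proofs hinge on the same essential inputs (the test maps $F_\avar$, Proposition~\ref{pro:sum2}, density in $\avar$, and the algebraic independence underlying Proposition~\ref{pro:uniqueC1}), so the difference is one of organization rather than of underlying mechanism.

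Two small points worth making explicit in a final write-up: (a) the ``upgrade to an identity in $\QQ[c^{(1)}_\bullet,\dots,c^{(e)}_\bullet]$'' needs the multi-block extension of Proposition~\ref{pro:uniqueC1}, e.g.\ by specializing each $\avar^{(i)}$ to $0$ and invoking the algebraic independence of the complete symmetric polynomials in the $e$ disjoint alphabets $\alpha^{(1)},\dots,\alpha^{(e)}$ (valid since $m>\deg B$); and (b) when recovering $\mu$ from a monomial of $Q_\mu$ one should note that blocks carrying the empty partition contribute nothing to the monomial, which is harmless because the number $e$ of blocks is fixed when extracting the given $\beta$-monomial.
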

 Fix $m>\deg B$. We consider two vector spaces over $\QQ$:
\begin{itemize}
	\item $V_c$ with a dual basis $\{c_1,...,c_m\}$ 
	\item $V_s$ with a dual basis $\{s_\lambda\}$ where partitions $\lambda$ correspond to variables occurring in $B$. We have $|\lambda|\le m$.
\end{itemize}
We treat $B$ as a polynomial function on the space $V_s$.
 Let $S:V_c\to V_s$ be a polynomial map defined by:
 $$s_\lambda(S(x))=c_\lambda(x):=\prod c_{\lambda_i}(x)\,.$$
 To a one parameter subgroup $\sigma:\C^*\to \TT_+$ and a sequence $\avar\in\ZZ_+^m$ we associate a point $x(\avar,\sigma)\in V_c$ such that
 $$c_i(T_{F_{\avar}})= c_i(x(\avar,\sigma))t^i \in \coh^\bullet_{\C^*}(\pt) \simeq \QQ[t]\,.$$
Consider the set
$$X=\{x(\avar,\sigma)\in V_c|\eu_\sigma(F_\avar)\neq0\in\coh^\bullet_{\C^*}(\pt) \}\subset V_c\,.$$
\begin{lemma} \label{lemma:unique1}
	Suppose that a polynomial $A\in \QQ[\cvar]$ of degree at most $m$ vanishes on $X$. Then $A=0$.
\end{lemma}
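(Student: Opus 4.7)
The plan is to restrict attention to a single well-chosen one-parameter subgroup $\sigma$ of $\TT_+$, use Zariski density of $\ZZ_+^m$ in $\C^m$, and reduce the statement to the classical surjectivity of the elementary symmetric polynomial map.

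First I would choose the diagonal $\sigma: \C^* \to \TT_+$ sending $\alpha_i\mapsto t$ for all $i$ and $\beta_j\mapsto t$ for all $j$. Under this $\sigma$ the restricted Euler class is $\eu_\sigma(F_{\avar}) = \bigl(\prod_i a_i\bigr)\cdot t^l\neq 0$ for every $\avar\in\ZZ_+^m$, so each $x(\avar,\sigma)$ lies in $X$. Restricting formula \eqref{w:c} to $\sigma$ yields
\[
\sum_{k\ge 0} c_k(x(\avar,\sigma))\,t^k \;=\; \prod_{i=1}^m \frac{1+a_i t}{1+t} \;=\; (1+t)^{-m}\sum_{j=0}^m e_j(\avar)\,t^j,
\]
where $e_j$ denotes the $j$-th elementary symmetric polynomial. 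In particular each $c_k(x(\avar,\sigma))$ is a polynomial in $\avar$.

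By hypothesis $A$ vanishes on $X$, so the polynomial $\avar\mapsto A\bigl(c_1(x(\avar,\sigma)),\ldots,c_m(x(\avar,\sigma))\bigr)$ vanishes on $\ZZ_+^m$, and Zariski density forces it to be the zero polynomial. Here the hypothesis $\deg A\le m$ is crucial: it guarantees that $A$ depends only on $c_1,\ldots,c_m$, the finitely many Chern coordinates we are parametrizing. Extracting the coefficient of $t^k$ in the displayed generating function gives
\[
c_k(x(\avar,\sigma))\;=\;\sum_{j=0}^k (-1)^{k-j}\binom{m+k-j-1}{k-j}e_j(\avar),
\]
so the linear map $(e_1(\avar),\ldots,e_m(\avar))\mapsto (c_1(x(\avar,\sigma)),\ldots,c_m(x(\avar,\sigma)))$ is lower-triangular with $1$'s on the diagonal, hence a linear isomorphism. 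Composing with the surjective elementary-symmetric map $\avar\mapsto (e_1(\avar),\ldots,e_m(\avar))$ from $\C^m$ to $\C^m$ shows that $(c_1,\ldots,c_m)$ attains every value in $\C^m$ as $\avar$ varies. Hence $A\equiv 0$ as a polynomial in $\QQ[c_1,\ldots,c_m]$.

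The main obstacle I anticipate is the careful choice of $\sigma$: a tempting but wrong choice inside the subtorus $\TT$ (that is, taking $\beta_j\mapsto 0$) would annihilate $\eu_\sigma(F_\avar)$ and remove the points from $X$, rendering the hypothesis inapplicable. One must keep the $\beta_j$-weights nonzero while still obtaining a clean expression for $c_\avar|_\sigma$; the diagonal $\sigma$ achieves this cleanly because $\beta_j$ does not appear in the Chern class formula and only enters via the Euler class. Once this is done, the remaining argument is essentially linear algebra plus Zariski density.
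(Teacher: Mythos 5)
Your route differs mildly from the paper's: the paper argues that vanishing of $A(x(\avar,\sigma))$ for \emph{every} admissible $\sigma$ forces $A(c(F_\avar))=0$ in $\coh^\bullet_{\TT_+}(\pt)$ for all $\avar$, and then simply hands the problem back to the argument of Proposition~\ref{pro:uniqueC1}. You instead fix one well-chosen $\sigma$ and show directly that the resulting Chern coordinates $(c_1,\dots,c_m)$ already surject onto $\C^m$ as $\avar$ varies, using a triangular change of basis from the $e_j(\avar)$. This is more self-contained and does not re-enter the earlier proof; it is a legitimate and clean alternative.

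One inaccuracy should be fixed. You assert that ``$\beta_j$ does not appear in the Chern class formula and only enters via the Euler class,'' and accordingly you restrict formula~\eqref{w:c} to $\sigma$. But~\eqref{w:c} is the $\TT$-equivariant restriction, in which $\beta_j\mapsto 0$, whereas $x(\avar,\sigma)$ is defined via the $\TT_+$-equivariant Chern class restricted along $\sigma$. The $\TT_+$-equivariant relative tangent bundle contributes the target weights $\beta_1,\dots,\beta_l$ in the numerator, so under your diagonal $\sigma$ the correct generating function is
\[
\sum_{k\ge 0} c_k\bigl(x(\avar,\sigma)\bigr)\,t^k
= (1+t)^{\,l-m}\sum_{j=0}^m e_j(\avar)\,t^j,
\]
not $(1+t)^{-m}\sum_j e_j(\avar)t^j$. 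This changes the binomial coefficients in your closed formula for $c_k$, but not the conclusion: the transition matrix from $(e_0,\dots,e_m)$ to $(c_0,\dots,c_m)$ remains unitriangular (the diagonal entries are still $\binom{l-m}{0}=1$), so the linear isomorphism and hence the surjectivity argument go through unchanged. With that correction the proof is sound.
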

\begin{proof}
	For $\avar \in \ZZ_+^m$ and a general one parameter subgroup of $\TT_+$ we have
	$$A(c(F_\avar))=0\in \coh_{\C^*}^\bullet(\pt)\,.$$
	Therefore $A(c(F_\avar))=0\in \coh_{\TT_+}^\bullet(\pt)$. The proof of Proposition \ref{pro:uniqueC1} implies that $A=0$.
\end{proof}
\begin{lemma}
	The polynomial $B$ vanishes on the linear span of the image $S(X)$.
\end{lemma}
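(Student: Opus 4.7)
The plan is to exploit the additivity of Landweber--Novikov classes under disjoint union of maps (Proposition~\ref{pro:sum2}) together with the freedom to take arbitrary positive multiplicities, in order to realize every finite $\QQ$-linear combination of points $S(x_i)\in V_s$ as a substitution into the identity $B(s(f))=0$ for suitable $f\in\CC_l$. Decomposing $B$ by cohomological degree, I may assume throughout that $B$ is homogeneous of some degree $N$, since $B(s(f))$ must vanish in each degree separately.

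The first step is the equivariant computation for a single test map. Because the weights of the unfolding coordinates cancel between $\eu(\rhoT)$ and $\eu(\rhoS)$, one obtains $\eu(T_{F_\avar})=\prod_i a_i\prod_j\beta_j$. The pushforward rule from Section~\ref{sec:CC_l} then gives $(F_\avar)_*=\eu(T_{F_\avar})\cdot(-)$ and $F_\avar^*=\id$, so
\[
s_\lambda(F_\avar)=\eu(T_{F_\avar})\cdot c_\lambda(F_\avar)\in\coh^*_{\TT_+}(\pt).
\]
Restricting along $\sigma$ with $\sigma^*(\beta_j)=q_j t$ then yields
$s_\lambda(F_\avar)|_\sigma=e(\avar,\sigma)\,c_\lambda(x(\avar,\sigma))\,t^{l+|\lambda|}$,
where the scalar $e(\avar,\sigma):=\prod_i a_i\prod_j q_j$ is nonzero precisely because $x(\avar,\sigma)\in X$.

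Next fix points $x_i=x(\avar_i,\sigma_i)\in X$ for $i=1,\dots,r$, and for any positive integers $n_1,\dots,n_r$ apply the hypothesis to $f:=F_{\avar_1}^{\sq n_1}\sq\cdots\sq F_{\avar_r}^{\sq n_r}$, which belongs to $\CC_l$ by the closure of this class under $\sq$ (Section~\ref{s:union}), equipped with the one-parameter subgroup that acts as $\sigma_i$ on each of the $n_i$ copies of $F_{\avar_i}$. Iterating Proposition~\ref{pro:sum2} gives $s_\lambda(f)|_\sigma=\sum_i n_i e_i c_\lambda(x_i)\,t^{l+|\lambda|}$, and since every monomial of the homogeneous polynomial $B$ contributes exactly $t^N$ after substitution, one obtains
\[
0=B(s(f))|_\sigma=t^N\cdot Q(n_1,\dots,n_r),
\]
where $Q\in\QQ[n_1,\dots,n_r]$ is $B$ with each variable $s_\lambda$ replaced by $\sum_i n_i e_i c_\lambda(x_i)$. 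Vanishing for all $n_i\in\ZZ_+$ forces $Q\equiv 0$ as a polynomial, so I may substitute the rational values $n_i:=\beta_i/e_i$: the factors $e_i$ telescope, producing $B\bigl(\textstyle\sum_i\beta_i\,S(x_i)\bigr)=0$ for arbitrary $\beta_i\in\QQ$. Since the $x_i$ were arbitrary points of $X$, $B$ vanishes on the entire $\QQ$-linear span of $S(X)$.

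The most delicate step is the equivariant pushforward formula $(F_\avar)_*=\eu(T_{F_\avar})\cdot(-)$ for the stable unfolding: one must identify the $\TT_+$-action on the unfolding coordinates, verify that their weights cancel between $\eu(\rhoT)$ and $\eu(\rhoS)$, and confirm that $\eu(\rhoS)$ remains a nonzero polynomial in $\coh^*(B\TT_+)$ so that the general pushforward formula of Section~\ref{sec:CC_l} applies. Once this is in place, the rest is a clean polynomial manipulation that exploits the linear independence of monomials in $n_1,\dots,n_r$.
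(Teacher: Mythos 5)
Your argument is correct and matches the paper's proof in all essentials: both form the disjoint-union test maps $F_{\avar_1}^{(b_1)}\sq\cdots\sq F_{\avar_k}^{(b_k)}$ with the diagonal one-parameter subgroup, invoke the additivity of the Landweber--Novikov classes from Proposition~\ref{pro:sum2} to identify $s(F)$ with the point $\sum b_i e_i S(x_i)\in V_s$, and then extend the vanishing from positive-integer multiplicities to all of the rational linear span. The paper phrases the last step as ``the set of such points is Zariski dense in the span,'' while you make the same density argument explicit by naming the polynomial $Q(n_1,\dots,n_r)$ and substituting $n_i=\beta_i/e_i$ after concluding $Q\equiv 0$; this is the same argument, just spelled out.
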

\begin{proof}
	Choose arbitrary points $x(\avar_1,\sigma_1),\dots, x(\avar_k,\sigma_k) \in S(X)$. Let $e_1,\dots,e_k$ be nonzero numbers such that
	$$\eu(F_{\avar_i})=e_it^l\in \coh^\bullet_{\sigma_i}(\pt)\simeq \QQ[t]\,.$$
	 Pick any $b_1,\dots,b_k\in \NN$ and consider a map
	$$F=F^{(b_1)}_{\avar_1}\sq\dots \sq F^{(b_k)}_{\avar_k}$$
	with the diagonal $\C^*$-action. This map is in $\CC_l$, therefore $B(s(F))=0 \in \coh^\bullet_{\C^*}(\pt)$. Proposition \ref{pro:sum2} implies that $s(F)$ corresponds to a point
	$$\sum_{i=1}^kb_ie_iS(x(\avar_i,\sigma_i)) \in V_s$$
	%The polynomial $B$ vanishes on such point for an arbitrary tuple $(b_1,\dots,b_k)\in \ZZ_+^k$.
    The set of all such points is Zariski dense in the linear span of points $S(x(\avar_1,\sigma_1)),\dots, S(x(\avar_k,\sigma_k))$.
\end{proof}
\begin{lemma}
	The image $S(X)$ spans the space $V_s$.
\end{lemma}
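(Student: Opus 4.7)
The plan is to dualize the claim and show that every linear functional on $V_s$ which vanishes on $S(X)$ is the zero functional; since $V_s$ is finite-dimensional, this is equivalent to $S(X)$ spanning $V_s$.

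To execute this, I would take an arbitrary linear functional $\ell = \sum_\lambda b_\lambda \, s_\lambda$ on $V_s$, the sum ranging over the finitely many partitions $\lambda$ labelling the basis of $V_s$ (those occurring as variables in $B$, and satisfying $|\lambda| \le m$ by the choice of $m$), and assume $\ell(S(x)) = 0$ for every $x \in X$. By the defining identity $s_\lambda(S(x)) = c_\lambda(x) = \prod_i c_{\lambda_i}(x)$, this rewrites as the vanishing on $X$ of the single polynomial
\[
A := \sum_\lambda b_\lambda \, c_\lambda \;\in\; \QQ[c_1,\ldots,c_m],
\]
whose weighted degree is bounded by $\max_\lambda |\lambda| \le m$. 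Lemma~\ref{lemma:unique1} applies and forces $A \equiv 0$ as a polynomial.

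To pass from $A = 0$ to $b_\lambda = 0$ for every $\lambda$, I would use the standard bijection, in multiplicity notation, between partitions $\lambda = (1^{a_1} 2^{a_2} \cdots)$ and monomials $c_\lambda = c_1^{a_1} c_2^{a_2} \cdots$ in $\QQ[\cvar]$: distinct partitions yield distinct monomials, and monomials are $\QQ$-linearly independent, so $A = 0$ forces each $b_\lambda$ to vanish, hence $\ell = 0$. There is no serious obstacle here; the geometric substance of the argument already lies in the construction of the test maps $F_\avar$ and in Lemma~\ref{lemma:unique1}, so this concluding lemma is essentially a formal duality-plus-linear-independence step. The only point deserving attention is the bookkeeping on weighted polynomial degrees, ensuring that the degree bound $|\lambda| \le m$ fed into Lemma~\ref{lemma:unique1} is indeed guaranteed by the initial choice $m > \deg B$.
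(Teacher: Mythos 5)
Your proposal is correct and is essentially the same argument as the paper's, merely phrased as a direct duality argument (show the annihilator of $S(X)$ in $V_s^*$ is trivial) instead of the paper's proof by contradiction (suppose $S(X)$ lies in a codimension-one hyperplane); both reduce to Lemma~\ref{lemma:unique1}. The one small point where you go further than the paper is in spelling out the final step that $A=0$ forces every coefficient $b_\lambda$ to vanish because distinct partitions $\lambda$ give distinct monomials $c_\lambda$ in $\QQ[\cvar]$, which are $\QQ$-linearly independent; the paper leaves this implicit, so your version is slightly more complete.
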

\begin{proof}
	Suppose otherwise. Then the set $S(X)$ is contained in a codimension one subspace given by some linear equation
	$$\sum a_\lambda s_\lambda(x)=0\,.$$
	Therefore, the polynomial function $A=\sum a_\lambda c_\lambda$ vanishes on the set $X$. It is of degree at most $m$. Lemma \ref{lemma:unique1} implies that $A=0$.
\end{proof}
The lemmas above show that $B$ vanishes on the whole space $V_s$. Therefore, $B=0$.
\subsection{Chern and Landweber-Novikov variables}

\begin{pro} \label{pro:uniqueSC}
	Consider a polynomial $C\in \QQ[\cvar,\svar]$. Suppose that for every  $f\in\CC_l$ we have
	$$C(c(f),f^*s(f))=0\in \coh^\bullet(X)\,.$$ Then $C=0$ as a polynomial.
\end{pro}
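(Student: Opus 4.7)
The plan is to reduce the joint uniqueness statement to Propositions~\ref{pro:uniqueC1} and~\ref{pro:uniqueS1'} via the disjoint union construction from Section~\ref{s:union}. First I would write
$$C(\cvar,\svar) = \sum_\mu P_\mu(\cvar)\,\svar^\mu$$
as a polynomial in $\svar$ with coefficients in $\QQ[\cvar]$, the sum running over a finite set of multi-indices $\mu$; the goal is to show that every $P_\mu$ vanishes.

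For arbitrary $f,g\in\CC_l$, let $M_1$ denote the source of $f$ and $N_2$ the target of $g$. Restricting the vanishing identity $C(c(f\sq g),(f\sq g)^*s(f\sq g))=0$ to the component $M_1\times N_2$ and using Proposition~\ref{pro:sum2} one obtains
$$c(f\sq g)|_{M_1\times N_2}=\tilde\pi_1^* c(f),\qquad (f\sq g)^*s(f\sq g)|_{M_1\times N_2}=\tilde\pi_1^* f^*s(f)+\tilde\pi_2^* s(g).$$
Substituting into $C$ and expanding by the binomial theorem gives
$$\sum_\nu (\tilde\pi_2^* s(g))^\nu\cdot D_\nu(f)=0 \quad \text{in } \coh^\bullet(M_1)\otimes\coh^\bullet(N_2),$$
where $D_\nu(f):=\sum_{\mu\ge\nu}\binom{\mu}{\nu}P_\mu(c(f))\,(f^*s(f))^{\mu-\nu}\in\coh^\bullet(M_1)$.

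Now I would fix $f$ and regard the left-hand side as an $\svar$-polynomial with $\coh^\bullet(M_1)$-coefficients that vanishes under every specialization $\svar\to s(g)$, $g\in\CC_l$. Proposition~\ref{pro:uniqueS1'}, applied to each $\QQ$-linear summand of $\coh^\bullet(M_1)$, forces $D_\nu(f)=0$ for every $\nu$ and every $f\in\CC_l$. A downward induction on the partial order $\ge$ among the multi-indices $\mu$ occurring in $C$ then finishes the proof: if $\nu$ is maximal among these, then $D_\nu(f)$ collapses to $P_\nu(c(f))$ and Proposition~\ref{pro:uniqueC1} yields $P_\nu=0$; inductively, once $P_\mu=0$ is known for all $\mu>\nu$ the equation $D_\nu(f)=0$ again reduces to $P_\nu(c(f))=0$, and Proposition~\ref{pro:uniqueC1} completes the step.

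The only delicate technical point is the passage from Proposition~\ref{pro:uniqueS1'}, stated for polynomials over $\QQ$, to an identity whose coefficients live in $\coh^\bullet(M_1)$. This is handled by the standard observation that an $H$-valued $\svar$-polynomial identity which holds after every substitution $\svar\to s(g)$ reduces, upon fixing a $\QQ$-basis of $H$ (worked out degree by degree), to a family of ordinary $\QQ$-coefficient identities to which Proposition~\ref{pro:uniqueS1'} applies directly.
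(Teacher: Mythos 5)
Your proof is correct, and it travels a genuinely different path from the paper's. Both approaches use disjoint unions $f\sq g$ and ultimately reduce to Propositions~\ref{pro:uniqueC1} and~\ref{pro:uniqueS1'}, but in opposite orders. The paper first pins down the $\cvar$-inputs by specializing to a test map $F_\avar$, forming $B_\avar(\svar)=C(c_\avar,\svar)$ and showing $B_\avar=0$ (via a translate $B'_\avar(s)=B_\avar(s+s(F_\avar))$ and the observation that Proposition~\ref{pro:uniqueS1'} is \emph{proved} using only test maps and their disjoint sums), then extracts $\cvar$-coefficients and runs the \emph{proof} of Proposition~\ref{pro:uniqueC1}. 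You instead expand $C$ in $\svar$-monomials from the start, push the identity through the restriction to $M_1\times N_2$ with the binomial expansion, and then apply Propositions~\ref{pro:uniqueS1'} and~\ref{pro:uniqueC1} \emph{as stated} (i.e.\ as black boxes), since the input to Proposition~\ref{pro:uniqueS1'} ranges over all of $\CC_l$ and the derived vanishing $D_\nu(f)=0$ holds for every $f\in\CC_l$. That is a modest structural advantage. The cost is the extra machinery: you need a K\"unneth-style identification $\coh^\bullet(M_1\times N_2)\simeq\coh^\bullet(M_1)\otimes\coh^\bullet(N_2)$, the coefficient extraction via a $\QQ$-basis of $\coh^\bullet(M_1)$ in each relevant degree, and a downward induction over the multi-indices $\nu$ (equivalently, an argument by contradiction taking $\nu$ maximal among indices with $P_\nu\neq 0$) --- all correct, just more moving parts than the paper's two-step specialization. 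Two small cosmetic points: your downward induction is cleaner if phrased as induction on $|\nu|$ rather than directly on the partial order, and the displayed $D_\nu(f)$ is missing the implicit $\tilde\pi_1^*$ in front, though it is clear from context.
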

Fix $m>\deg C$. For a sequence $\avar\in \ZZ_+^m$ consider the test map $F_\avar$ with the action of the torus $\TT_+$ and let $c_\avar:=c(F_\avar)$. Consider a polynomial $B_\avar(-)=C(c_\avar,-)\in \QQ[\svar]$. We have $\deg B_\avar\le \deg C <m$.
\begin{lemma}
	For any $\avar\in \ZZ_+^m$ we have $B_\avar=0$.
\end{lemma}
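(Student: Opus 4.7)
The plan is to adapt the argument of Proposition~\ref{pro:uniqueS1'} to the present situation, where the Chern variables must be pinned at $c_\avar$ while the Landweber--Novikov variables are still allowed to vary freely. The basic device is to use test maps that always carry $F_\avar$ as one of their components, so that on that distinguished component the Chern-class substitution automatically reproduces $c_\avar$.

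Concretely, for tuples $\avar_1,\ldots,\avar_k\in\ZZ_+^m$, positive integers $b_1,\ldots,b_k$, and a generic one-parameter subgroup $\sigma:\C^*\to\TT_+$, I would consider the $\C^*$-equivariant map
\[
h=F_\avar\sq F_{\avar_1}^{(b_1)}\sq\cdots\sq F_{\avar_k}^{(b_k)},
\]
which lies in $\CC_l$ since $\CC_l$ is closed under $\sq$. The hypothesis on $C$ yields $C(c(h),h^{*}s(h))=0$ in the $\C^*$-equivariant cohomology of the source of $h$. Restricting this identity to the component of the source coming from $F_\avar$ and applying Proposition~\ref{pro:sum2}, the Chern substitution produces exactly $c_\avar$, while the pulled-back Landweber--Novikov classes additively combine:
\[
h^{*}s_\lambda(h)=F_\avar^{*}s_\lambda(F_\avar)+\sum_{i=1}^{k}b_i\,s_\lambda(F_{\avar_i}).
\]
Since all sources and targets here are equivariantly contractible to a point, everything reduces to elements of $\coh^{\bullet}_{\C^*}(\pt)$; in the notation of the proof of Proposition~\ref{pro:uniqueS1'}, the $s$-vector above equals $S(x(\avar,\sigma))+\sum_i b_i\,S(x(\avar_i,\sigma_i))\in V_s$. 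We conclude that $B_\avar$ vanishes at every such point.

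For fixed data $(\avar_i,\sigma_i)$, varying the $b_i$ over $\NN$ gives a Zariski-dense subset of the affine translate $S(x(\avar,\sigma))+\operatorname{span}\{S(x(\avar_i,\sigma_i))\}\subset V_s$, so $B_\avar$ vanishes identically on that affine translate. Letting $k$ and the $(\avar_i,\sigma_i)$ vary, and invoking the spanning lemma from the proof of Proposition~\ref{pro:uniqueS1'} which asserts that $S(X)$ generates $V_s$, the union of these affine translates exhausts $V_s$. Hence $B_\avar=0$ as a polynomial, as desired.

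The main delicate point is the assertion that, after restriction to the relevant component of $h$, the class $h^{*}s_\lambda(h)$ is a \emph{linear} (rather than a more complicated polynomial) combination of the individual $s(F_{\avar_i})$. This is precisely guaranteed by Proposition~\ref{pro:sum2}, and in the equivariant-vector-space setting where all pullbacks along contractions are trivial, it collapses to ordinary addition of scalars in $\QQ[t]$.
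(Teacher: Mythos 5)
Your proposal is correct and, at bottom, is the same argument as the paper's. The paper factors the reasoning through an auxiliary translate $B'_\avar(s):=B_\avar\bigl(s+s(F_\avar)\bigr)$ and then simply observes that the proof of Proposition~\ref{pro:uniqueS1'} uses only test maps and their disjoint $\sq$-sums, so that proof applies verbatim to $B'_\avar$ and gives $B'_\avar=0$. You instead inline exactly that argument: you pin the first summand to $F_\avar$, restrict to its source component, read off via Proposition~\ref{pro:sum2} that the Landweber--Novikov substitution is the translate $S(x(\avar,\sigma))+\sum_i b_i\,S(x(\avar_i,\sigma_i))$, invoke Zariski density of the $b_i$-lattice inside the span, and finish with the spanning lemma (Lemma inside the proof of Proposition~\ref{pro:uniqueS1'}) to conclude the affine translate is all of $V_s$. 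So the decomposition, the test maps, and the key lemmas are identical; what differs is only that you unpack the citation to Proposition~\ref{pro:uniqueS1'} rather than deferring to it. One small inaccuracy worth noting: the vector you obtain is really $S(x(\avar,\sigma))+\sum_i b_i e_i\,S(x(\avar_i,\sigma_i))$, with the nonzero Euler-class scalars $e_i$ as in the proof of Proposition~\ref{pro:uniqueS1'}; dropping the $e_i$ is harmless for the density and spanning steps since they are nonzero, but the formula as written is off by these constants.
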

\begin{proof}
	Consider a polynomial
	$$B_\avar'(s):=B_\avar(s+s(F_\avar)) \in \QQ[\svar]\,.$$
	It is enough to show that $B_\avar'=0$.
	Let $g$ be a disjoint sum ($\sq$) of test maps and $H=F_\avar\sq g$. Then
	$$0=C(c(H),s(H))_{|1}=C(c(F_\avar),s(F_\avar)+s(g))=
	B_\avar(s(F_\avar)+s(g))=B_\avar'(s(g))\,,
	$$
	where $|_1$ denotes restriction to the component of the domain corresponding to $F_\avar$. The proof of Proposition \ref{pro:uniqueS1'} uses only test maps and their disjoint sums, therefore $B_\avar'=0$.
\end{proof}
We use the isomorphism $\QQ[\cvar,\svar] \simeq \QQ[\cvar][\svar]$ and consider the coefficient of $C$ corresponding to a given monomial in the $\svar$-variables. It is a polynomial in $\cvar$-variables of degree at most $m$, denote it $A\in \QQ[\cvar]$. Due to the above lemma for any $\avar\in \ZZ_+^m$ we have $A(c_\avar)$, where $c_\avar$ is defined in equation \eqref{w:c}. The proof of Proposition \ref{pro:uniqueC1} implies that $A=0$. All coefficients of $C$ vanish, therefore $C=0$.

\section{Appendix: Examples} \label{ap:examples}

\subsection{Sample 
S 
and 
R 
series}
In Figures~\ref{fig:Sl1} and \ref{fig:Rl1} we present some initial terms of $\KazT{\multi}$ and $\KazS{\multi}$ series for $l=1$. Higher degree terms, data for other multisingularities and other $l$'s are available on the \cite{TPP}. Divisibility and positivity (e.g. along the lines of \cite{pragacz:positivity}) observations for terms in various expansions of these series are subject to future study.

\subsection{The \texorpdfstring{$l=1$}{l=1} Master Series}
Here we present the Master Series for $l=1$, up to the theoretical bound, the Mather bound $M(1)=14$:
\begin{multline*}
    S_\varnothing=(-s_0 ) + 
    \left( \frac{1}{2}  s_1  \right) +
    \left( \frac{1}{6} (7s_2-2s_{11}) \right) +
    \left( \frac{1}{4}( 4s_3-5s_{21} +s_{111})  \right) 
    \\
    + \left( \frac{1}{30}(29s_4-29s_{31}-62s_{22}+39s_{211}-6s_{1^4}) \right) +
\end{multline*}

\begin{multline*}
+\frac{1}{12}\left(12 s_{ 5 } -11 s_{ 4 1 } -36 s_{ 3 2 } + 11 s_{ 3 1^2 } + 38 s_{ 2^2 1 } -16 s_{ 2 1^3 } + 2 s_{ 1^5 }
\right)
\\
+\frac{1}{84}\left(86 s_{ 6 } -86 s_{ 5 1 } -289 s_{ 4 2 } + 72 s_{ 4 1^2 } + 41 s_{ 3^2 } 
+ 375 s_{ 3 2 1 } -72 s_{ 3 1^3 } + 381 s_{ 2^3 } -360 s_{ 2^2 1^2 } + 114 s_{ 2 1^4 } -12 s_{ 1^6 }
\right) \\
\end{multline*}

\begin{multline*}
+\frac{1}{24} (24 s_{ 7 } -26 s_{ 6 1 } -132 s_{ 5 2 } + 26 s_{ 5 1^2 } + 36 s_{ 4 3 } + 91 s_{ 4 2 1 } -19 s_{ 4 1^3 } + 13 s_{ 3^2 1 } 
\\
+ 204 s_{ 3 2^2 } -129 s_{ 3 2 1^2 } + 19 s_{ 3 1^4 } -207 s_{ 2^3 1 } + 130 s_{ 2^2 1^3 } -33 s_{ 2 1^5 } + 3 s_{ 1^7 }) 
\end{multline*}

\begin{multline*}
+\frac{1}{90}(87 s_{ 8 } -87 s_{ 7 1 } -517 s_{ 6 2 } + 107 s_{ 6 1^2 } -87 s_{ 5 3 } + 604 s_{ 5 2 1 } -107 s_{ 5 1^3 } -122 s_{ 4^2 } -209 s_{ 4 3 1 } 
\\ 
+ 923 s_{ 4 2^2 } -438 s_{ 4 2 1^2 } + 65 s_{ 4 1^4 } -418 s_{ 3^2 2 } -39 s_{ 3^2 1^2 } -1527 s_{ 3 2^2 1 } + 545 s_{ 3 2 1^3 } -65 s_{ 3 1^5 } 
\\ 
-1022 s_{ 2^4 } + 1255 s_{ 2^3 1^2 } -590 s_{ 2^2 1^4 } + 125 s_{ 2 1^6 } -10 s_{ 1^8 }
)
\end{multline*}

\begin{multline*}
+\frac{1}{20}(20 s_{ 9 } -17 s_{ 8 1 } -40 s_{ 7 2 } + 17 s_{ 7 1^2 } + 140 s_{ 6 3 } + 247 s_{ 6 2 1 } -27 s_{ 6 1^3 } -280 s_{ 5 4 } + 77 s_{ 5 3 1 } + 560 s_{ 5 2^2 } 
\\ 
-164 s_{ 5 2 1^2 } + 27 s_{ 5 1^4 } -118 s_{ 4^2 1 } -300 s_{ 4 3 2 } + 19 s_{ 4 3 1^2 } -193 s_{ 4 2^2 1 } + 103 s_{ 4 2 1^3 } -13 s_{ 4 1^5 }
\\ 
-80 s_{ 3^3 } -62 s_{ 3^2 2 1 } + 4 s_{ 3^2 1^3 } -500 s_{ 3 2^3 } + 477 s_{ 3 2^2 1^2 } -130 s_{ 3 2 1^4 } + 13 s_{ 3 1^6 } + 502 s_{ 2^4 1 } 
\\ 
-410 s_{ 2^3 1^3 } + 154 s_{ 2^2 1^5 } -28 s_{ 2 1^7 } + 2 s_{ 1^9 }
) 
\end{multline*}

\begin{multline*}
+\frac{1}{132}(
142 s_{ 10 } -142 s_{ 9 1 } -571 s_{ 8 2 } + 76 s_{ 8 1^2 } + 3422 s_{ 7 3 } + 713 s_{ 7 2 1 } -76 s_{ 7 1^3 } + 540 s_{ 6 4 } + 2902 s_{ 6 3 1 } 
\\ 
+ 3915 s_{ 6 2^2 } -1647 s_{ 6 2 1^2 } + 208 s_{ 6 1^4 } -2447 s_{ 5^2 } + 262 s_{ 5 4 1 } + 3617 s_{ 5 3 2 } + 322 s_{ 5 3 1^2 } 
\\ 
-4628 s_{ 5 2^2 1 } + 1723 s_{ 5 2 1^3 } -208 s_{ 5 1^5 } + 934 s_{ 4^2 2 } + 161 s_{ 4^2 1^2 } -1387 s_{ 4 3^2 } + 4681 s_{ 4 3 2 1 } 
\\ 
-246 s_{ 4 3 1^3 } -3684 s_{ 4 2^3 } + 2315 s_{ 4 2^2 1^2 } -677 s_{ 4 2 1^4 } + 76 s_{ 4 1^6 } + 463 s_{ 3^3 1 } + 3560 s_{ 3^2 2^2 } 
\\ 
+ 302 s_{ 3^2 2 1^2 } + 19 s_{ 3^2 1^4 } + 8312 s_{ 3 2^3 1 } -4038 s_{ 3 2^2 1^3 } + 885 s_{ 3 2 1^5 } -76 s_{ 3 1^7 } + 4094 s_{ 2^5 } 
\\ 
-6108 s_{ 2^4 1^2 } + 3735 s_{ 2^3 1^4 } -1168 s_{ 2^2 1^6 } + 186 s_{ 2 1^8 } -12 s_{ 1^{10} }
)
\end{multline*}

\begin{multline*}
+\frac{1}{24}(
24 s_{ 11 } -34 s_{ 10, 1 } -636 s_{ 9 2 } + 34 s_{ 9 1^2 } -212 s_{ 8 3 } -759 s_{ 8 2 1 } -1 s_{ 8 1^3 } + 712 s_{ 7 4 } -814 s_{ 7 3 1 } 
\\ 
-796 s_{ 7 2^2 } -559 s_{ 7 2 1^2 } + 1 s_{ 7 1^4 } + 280 s_{ 6 5 } + 940 s_{ 6 4 1 } -1428 s_{ 6 3 2 } -326 s_{ 6 3 1^2 } -2741 s_{ 6 2^2 1 } 
\\ 
+ 336 s_{ 6 2 1^3 } -45 s_{ 6 1^5 } + 841 s_{ 5^2 1 } + 3588 s_{ 5 4 2 } + 298 s_{ 5 4 1^2 } + 264 s_{ 5 3^2 } -1259 s_{ 5 3 2 1 } + 135 s_{ 5 3 1^3 } 
\\ 
-3332 s_{ 5 2^3 } + 1044 s_{ 5 2^2 1^2 } -433 s_{ 5 2 1^4 } + 45 s_{ 5 1^6 } -8 s_{ 4^2 3 } + 1762 s_{ 4^2 2 1 } -37 s_{ 4^2 1^3 } + 713 s_{ 4 3^2 1 } 
\\ 
+ 2368 s_{ 4 3 2^2 } -647 s_{ 4 3 2 1^2 } + 73 s_{ 4 3 1^4 } + 184 s_{ 4 2^3 1 } -446 s_{ 4 2^2 1^3 } + 115 s_{ 4 2 1^5 } -12 s_{ 4 1^7 } 
\\ 
+ 940 s_{ 3^3 2 } -65 s_{ 3^3 1^2 } + 396 s_{ 3^2 2^2 1 } + 16 s_{ 3^2 2 1^3 } -14 s_{ 3^2 1^5 } + 1844 s_{ 3 2^4 } -2380 s_{ 3 2^3 1^2 } 
\\ 
+ 879 s_{ 3 2^2 1^4 } -160 s_{ 3 2 1^6 } + 12 s_{ 3 1^8 } -1846 s_{ 2^5 1 } + 1834 s_{ 2^4 1^3 } 
-896 s_{ 2^3 1^5 } + 240 s_{ 2^2 1^7 } -34 s_{ 2 1^9 } + 2 s_{ 1^{11} }
)
\end{multline*}

\begin{align*}
&
\frac{1}{5460}(
4078 s_{ 12 } -4078 s_{ 11, 1 } -68688 s_{ 10, 2 } + 13178 s_{ 10, 1^2 } -566458 s_{ 9 3 } + 72766 s_{ 9 2 1 } -13178 s_{ 9 1^3 } 
\\ & \hskip 1 true cm
-905251 s_{ 8 4 } -1176391 s_{ 8 3 1 } + 2622 s_{ 8 2^2 } + 31173 s_{ 8 2 1^2 } -4840 s_{ 8 1^4 } + 187022 s_{ 7 5 } 
\\ & \hskip 1 true cm
-1149091 s_{ 7 4 1 } -2280494 s_{ 7 3 2 } -589447 s_{ 7 3 1^2 } -75388 s_{ 7 2^2 1 } -17995 s_{ 7 2 1^3 } + 4840 s_{ 7 1^5 } 
\\ & \hskip 1 true cm
+ 531611 s_{ 6^2 } + 1053876 s_{ 6 5 1 } -458271 s_{ 6 4 2 } -299287 s_{ 6 4 1^2 } -245652 s_{ 6 3^2 } -2240851 s_{ 6 3 2 1 } 
\\ & \hskip 1 true cm
-80655 s_{ 6 3 1^3 } -847752 s_{ 6 2^3 } + 559015 s_{ 6 2^2 1^2 } -140185 s_{ 6 2 1^4 } + 12320 s_{ 6 1^6 } + 1286723 s_{ 5^2 2 } 
\\ & \hskip 1 true cm
+ 157163 s_{ 5^2 1^2 } + 210449 s_{ 5 4 3 } -119784 s_{ 5 4 2 1 } + 50385 s_{ 5 4 1^3 } -183197 s_{ 5 3^2 1 } -1555048 s_{ 5 3 2^2 } 
\\ & \hskip 1 true cm
-280315 s_{ 5 3 2 1^2 } -45545 s_{ 5 3 1^4 } + 923140 s_{ 5 2^3 1 } -541020 s_{ 5 2^2 1^3 } + 135345 s_{ 5 2 1^5 } -12320 s_{ 5 1^7 } 
\\ & \hskip 1 true cm
+ 40823 s_{ 4^3 } + 191242 s_{ 4^2 3 1 } -115564 s_{ 4^2 2^2 } -139930 s_{ 4^2 2 1^2 } + 9760 s_{ 4^2 1^4 } + 595308 s_{ 4 3^2 2 } 
\\ & \hskip 1 true cm
-183675 s_{ 4 3^2 1^2 } -1681260 s_{ 4 3 2^2 1 } + 247470 s_{ 4 3 2 1^3 } -24360 s_{ 4 3 1^5 } + 365270 s_{ 4 2^4 } -225900 s_{ 4 2^3 1^2 } 
\\ & \hskip 1 true cm
+ 90615 s_{ 4 2^2 1^4 } -22540 s_{ 4 2 1^6 } + 2310 s_{ 4 1^8 } -5472 s_{ 3^4 } -213060 s_{ 3^3 2 1 } + 22590 s_{ 3^3 1^3 } 
\\ & \hskip 1 true cm
-715060 s_{ 3^2 2^3 } -84960 s_{ 3^2 2^2 1^2 } -30870 s_{ 3^2 2 1^4 } + 6020 s_{ 3^2 1^6 } -1288410 s_{ 3 2^4 1 } + 766920 s_{ 3 2^3 1^3 } 
\\ & \hskip 1 true cm
-225960 s_{ 3 2^2 1^5 } + 34860 s_{ 3 2 1^7 } -2310 s_{ 3 1^9 } -491460 s_{ 2^6 } + 858690 s_{ 2^5 1^2 } -639240 s_{ 2^4 1^4 } 
\\ & \hskip 1 true cm
+ 259980 s_{ 2^3 1^6 } -60900 s_{ 2^2 1^8 } + 7770 s_{ 2 1^{10} } -420 s_{ 1^{12} }
)
\end{align*}
\begin{align*}
  &  +\frac{1}{840}(
840 s_{ 13 } + 542 s_{ 12, 1 } + 141960 s_{ 11, 2 } -542 s_{ 11, 1^2 } + 235760 s_{ 10 3 } + 353728 s_{ 10 2 1 } -4008 s_{ 10 1^3 } 
\\ & \hskip 1 true cm
-276780 s_{ 9 4 } + 381098 s_{ 9 3 1 } + 437640 s_{ 9 2^2 } + 251614 s_{ 9 2 1^2 } + 4008 s_{ 9 1^4 } -456820 s_{ 8 5 } -537379 s_{ 8 4 1 } 
\\ & \hskip 1 true cm
+ 243180 s_{ 8 3 2 } + 222501 s_{ 8 3 1^2 } + 665178 s_{ 8 2^2 1 } + 84362 s_{ 8 2 1^3 } + 1998 s_{ 8 1^5 } + 229040 s_{ 7 6 } 
\\ & \hskip 1 true cm
-385542 s_{ 7 5 1 } -573580 s_{ 7 4 2 } -299979 s_{ 7 4 1^2 } -135240 s_{ 7 3^2 } + 552894 s_{ 7 3 2 1 } + 16707 s_{ 7 3 1^3 } 
\\ & \hskip 1 true cm
+ 400260 s_{ 7 2^3 } + 319688 s_{ 7 2^2 1^2 } + 29230 s_{ 7 2 1^4 } -1998 s_{ 7 1^6 } + 159779 s_{ 6^2 1 } -118440 s_{ 6 5 2 } 
\\ & \hskip 1 true cm
-101456 s_{ 6 5 1^2 } -178500 s_{ 6 4 3 } -606359 s_{ 6 4 2 1 } -62683 s_{ 6 4 1^3 } -134448 s_{ 6 3^2 1 } + 291060 s_{ 6 3 2^2 } 
\\ & \hskip 1 true cm
+ 255441 s_{ 6 3 2 1^2 } -11885 s_{ 6 3 1^4 } + 755352 s_{ 6 2^3 1 } -84590 s_{ 6 2^2 1^3 } + 30362 s_{ 6 2 1^5 } -2292 s_{ 6 1^7 } 
\\ & \hskip 1 true cm
+ 28560 s_{ 5^2 3 } -380573 s_{ 5^2 2 1 } + 17452 s_{ 5^2 1^3 } + 47740 s_{ 5 4^2 } -136039 s_{ 5 4 3 1 } -898940 s_{ 5 4 2^2 } 
\\ & \hskip 1 true cm
-168476 s_{ 5 4 2 1^2 } -11885 s_{ 5 4 1^4 } -154980 s_{ 5 3^2 2 } -41853 s_{ 5 3^2 1^2 } + 390108 s_{ 5 3 2^2 1 } -64610 s_{ 5 3 2 1^3 } 
\\ & \hskip 1 true cm
+ 9887 s_{ 5 3 1^5 } + 563220 s_{ 5 2^4 } -161540 s_{ 5 2^3 1^2 } + 132640 s_{ 5 2^2 1^4 } -28364 s_{ 5 2 1^6 } + 2292 s_{ 5 1^8 } 
\\ & \hskip 1 true cm
+ 8667 s_{ 4^3 1 } -21840 s_{ 4^2 3 2 } -50682 s_{ 4^2 3 1^2 } -478596 s_{ 4^2 2^2 1 } + 31500 s_{ 4^2 2 1^3 } -1563 s_{ 4^2 1^5 } 
\\ & \hskip 1 true cm
-32200 s_{ 4 3^3 } -352128 s_{ 4 3^2 2 1 } + 26210 s_{ 4 3^2 1^3 } -459340 s_{ 4 3 2^3 } + 257040 s_{ 4 3 2^2 1^2 } -61865 s_{ 4 3 2 1^4 } 
\\ & \hskip 1 true cm
+ 5124 s_{ 4 3 1^6 } + 87770 s_{ 4 2^4 1 } + 21360 s_{ 4 2^3 1^3 } -8848 s_{ 4 2^2 1^5 } + 2628 s_{ 4 2 1^7 } -290 s_{ 4 1^9 } 
\\ & \hskip 1 true cm
-14408 s_{ 3^4 1 } -216860 s_{ 3^3 2^2 } + 16080 s_{ 3^3 2 1^2 } -4775 s_{ 3^3 1^4 } -78040 s_{ 3^2 2^3 1 } -4360 s_{ 3^2 2^2 1^3 } 
\\ & \hskip 1 true cm
+ 10808 s_{ 3^2 2 1^5 } -1416 s_{ 3^2 1^7 } -205800 s_{ 3 2^5 } + 342570 s_{ 3 2^4 1^2 } -154000 s_{ 3 2^3 1^4 } + 37212 s_{ 3 2^2 1^6 } 
\\ & \hskip 1 true cm
-4920 s_{ 3 2 1^8 } + 290 s_{ 3 1^{10} } + 205860 s_{ 2^6 1 } -239680 s_{ 2^5 1^3 } + 142632 s_{ 2^4 1^5 } -49680 s_{ 2^3 1^7 } 
\\ & \hskip 1 true cm
+ 10340 s_{ 2^2 1^9 } -1200 s_{ 2 1^{11} } + 60 s_{ 1^{13} }
)
\end{align*}

\begin{align*}
&
+\frac{1}{360}(
780 s_{ 14 } -780 s_{ 13, 1 } + 6374 s_{ 12, 2 } -1984 s_{ 12, 1^2 } + 453540 s_{ 11, 3 } -5594 s_{ 11, 2, 1 } + 1984 s_{ 11, 1^3 } 
\\ & \hskip 1 true cm
+ 664390 s_{ 10, 4 } + 1000670 s_{ 10, 3, 1 } + 60966 s_{ 10, 2^2 } -27912 s_{ 10, 2 1^2 } + 3476 s_{ 10, 1^4 } -104820 s_{ 9 5 } 
\\ & \hskip 1 true cm
+ 1107590 s_{ 9 4 1 } + 1161886 s_{ 9 3 2 } + 815714 s_{ 9 3 1^2 } -55372 s_{ 9 2^2 1 } + 25928 s_{ 9 2 1^3 } -3476 s_{ 9 1^5 } 
\\ & \hskip 1 true cm
-325254 s_{ 8 6 } -266166 s_{ 8 5 1 } + 1040631 s_{ 8 4 2 } + 613754 s_{ 8 4 1^2 } + 257697 s_{ 8 3^2 } + 1984049 s_{ 8 3 2 1 } 
\\ & \hskip 1 true cm
+ 291462 s_{ 8 3 1^3 } + 69723 s_{ 8 2^3 } -71156 s_{ 8 2^2 1^2 } + 19502 s_{ 8 2 1^4 } -1672 s_{ 8 1^6 } -1830 s_{ 7^2 } 
\\ & \hskip 1 true cm
-465246 s_{ 7 6 1 } -343634 s_{ 7 5 2 } -250970 s_{ 7 5 1^2 } -110650 s_{ 7 4 3 } + 1314093 s_{ 7 4 2 1 } + 89862 s_{ 7 4 1^3 } 
\\ & \hskip 1 true cm
+ 372973 s_{ 7 3^2 1 } + 1590368 s_{ 7 3 2^2 } + 937589 s_{ 7 3 2 1^2 } + 57862 s_{ 7 3 1^4 } -14351 s_{ 7 2^3 1 } + 45228 s_{ 7 2^2 1^3 } 
\\ & \hskip 1 true cm
-16026 s_{ 7 2 1^5 } + 1672 s_{ 7 1^7 } -479753 s_{ 6^2 2 } -171962 s_{ 6^2 1^2 } -156726 s_{ 6 5 3 } -1123134 s_{ 6 5 2 1 } -61260 s_{ 6 5 1^3 } 
\\ & \hskip 1 true cm
-34466 s_{ 6 4^2 } -103252 s_{ 6 4 3 1 } + 314617 s_{ 6 4 2^2 } + 331141 s_{ 6 4 2 1^2 } -3928 s_{ 6 4 1^4 } + 299013 s_{ 6 3^2 2 } 
\\ & \hskip 1 true cm
+ 161151 s_{ 6 3^2 1^2 } + 1648689 s_{ 6 3 2^2 1 } + 105231 s_{ 6 3 2 1^3 } + 7404 s_{ 6 3 1^5 } + 294641 s_{ 6 2^4 } -273423 s_{ 6 2^3 1^2 } 
\\ & \hskip 1 true cm
+ 104010 s_{ 6 2^2 1^4 } -18052 s_{ 6 2 1^6 } + 1188 s_{ 6 1^8 } + 100195 s_{ 5^2 4 } -137329 s_{ 5^2 3 1 } -705746 s_{ 5^2 2^2 } 
\\ & \hskip 1 true cm
-162272 s_{ 5^2 2 1^2 } -11428 s_{ 5^2 1^4 } + 38196 s_{ 5 4^2 1 } -197547 s_{ 5 4 3 2 } -46776 s_{ 5 4 3 1^2 } + 55662 s_{ 5 4 2^2 1 } 
\\ & \hskip 1 true cm
-45891 s_{ 5 4 2 1^3 } + 7404 s_{ 5 4 1^5 } + 5823 s_{ 5 3^3 } + 259245 s_{ 5 3^2 2 1 } + 22047 s_{ 5 3^2 1^3 } + 767029 s_{ 5 3 2^3 } 
\\ & \hskip 1 true cm
+ 225378 s_{ 5 3 2^2 1^2 } + 49245 s_{ 5 3 2 1^4 } -5732 s_{ 5 3 1^6 } -280290 s_{ 5 2^4 1 } + 228195 s_{ 5 2^3 1^3 } -87984 s_{ 5 2^2 1^5 } 
\\ & \hskip 1 true cm
+ 16380 s_{ 5 2 1^7 } -1188 s_{ 5 1^9 } -32265 s_{ 4^3 2 } + 18075 s_{ 4^3 1^2 } -36162 s_{ 4^2 3^2 } -168294 s_{ 4^2 3 2 1 } 
\\ & \hskip 1 true cm
+ 15933 s_{ 4^2 3 1^3 } + 9862 s_{ 4^2 2^3 } + 93519 s_{ 4^2 2^2 1^2 } -15645 s_{ 4^2 2 1^4 } + 592 s_{ 4^2 1^6 } -16839 s_{ 4 3^3 1 } 
\\ & \hskip 1 true cm
-261609 s_{ 4 3^2 2^2 } + 206397 s_{ 4 3^2 2 1^2 } -15180 s_{ 4 3^2 1^4 } + 758865 s_{ 4 3 2^3 1 } -186375 s_{ 4 3 2^2 1^3 } 
\\ & \hskip 1 true cm
+ 39912 s_{ 4 3 2 1^5 } -2856 s_{ 4 3 1^7 } -42207 s_{ 4 2^5 } -2115 s_{ 4 2^4 1^2 } + 7680 s_{ 4 2^3 1^4 } -252 s_{ 4 2^2 1^6 } 
\\ & \hskip 1 true cm
-642 s_{ 4 2 1^8 } + 96 s_{ 4 1^{10} } + 3981 s_{ 3^4 2 } -1383 s_{ 3^4 1^2 } + 100605 s_{ 3^3 2^2 1 } -12105 s_{ 3^3 2 1^3 } 
\\ & \hskip 1 true cm
+ 2592 s_{ 3^3 1^5 } + 211215 s_{ 3^2 2^4 } + 47790 s_{ 3^2 2^3 1^2 } + 16560 s_{ 3^2 2^2 1^4 } -8064 s_{ 3^2 2 1^6 } + 834 s_{ 3^2 1^8 } 
\\ & \hskip 1 true cm
+ 322497 s_{ 3 2^5 1 } 
-226080 s_{ 3 2^4 1^3 } + 80304 s_{ 3 2^3 1^5 } -16128 s_{ 3 2^2 1^7 } + 1830 s_{ 3 2 1^9 } -96 s_{ 3 1^{11} } + 98301 s_{ 2^7 } 
\\ & \hskip 1 true cm
-196512 s_{ 2^6 1^2 } + 171528 s_{ 2^5 1^4 } -85008 s_{ 2^4 1^6 } + 25890 s_{ 2^3 1^8 } -4848 s_{ 2^2 1^{10} } + 516 s_{ 2 1^{12} } -24 s_{ 1^{14} }
)
\end{align*}
 As explained in Section \ref{s:Mond}, this series calculates---in an indirect way---the image Milnor number of quasihomogeneous finite %$\mathcal A$-codimensional 
 map germs --- just from their weights and degrees. The same information, in various other forms, are available on the \cite{TPP}, in a format that permits copying.

It is remarkable that the denominators of the homogeneous components above coincide with the sequence of Nørlund numbers A002790 \cite{oeis}.

\end{document}